\DeclareFontShape{T1}{lmr}{b}{sc}{<->ssub*cmr/bx/sc}{}
\DeclareFontShape{T1}{lmr}{bx}{sc}{<->ssub*cmr/bx/sc}{}
\let\ps@plain=\ps@empty
\let\origdoublepage\cleardoublepage
\newcommand{\clearemptydoublepage}{%
  \clearpage
  {\pagestyle{empty}\origdoublepage}%
}
\let\cleardoublepage\clearemptydoublepage
\theoremstyle{plain} 
\newtheorem{theo}{Theorem}
\newtheorem{cor}[theo]{Corollary}
\renewcommand{\thmcontinues}[1]{bis} 
\newtheorem{lemma}[theo]{Lemma}
\newtheorem{prop}[theo]{Proposition}
\newtheorem*{theo*}{Theorem}
\newtheorem*{conj*}{Conjecture} 
\newcounter{claimcounter}
\theoremstyle{definition} 
\theoremstyle{remark} 
\newtheorem{remark}[theo]{Remark} 
\newtheorem*{remark*}{Remark}
\newtheoremstyle{note}
{3pt}
{3pt}
{}
{}
{\itshape}
{:}
{.5em}
{}
\newcommand{\N}{\mathbb{N}}
\newcommand{\R}{\mathbb{R}}
\newcommand{\D}{\mathcal{D}}
\newcommand{\A}{\mathcal{A}}
\newcommand{\ctree}{\mathcal{T}} 
\newcommand{\Co}{\mathcal{C}} 
\newcommand{\e}{\mathrm{e}}
\newcommand{\der}{\mathrm{d}}
\newcommand{\1}{\mathbbm{1}}
\newcommand{\argmax}{\mathrm{argmax}\hspace*{0.07cm}}
\newcommand{\argmin}{\mathrm{argmin}\hspace*{0.07cm}}
\newcommand{\arginf}{\mathrm{arginf}\hspace*{0.07cm}}
\title[Branching processes seen from their extinction time]{Branching processes seen from their extinction time via path decompositions of reflected Lévy processes}
\author{Miraine Dávila Felipe \and Amaury Lambert}
\address{Laboratoire de Probabilités et Modèles Aléatoires, Sorbonne University, UPMC Univ Paris 06, CNRS,
Case courrier 188, 4, Place Jussieu, 75005 Paris, France.}
\address{Center for Interdisciplinary Research in Biology (CIRB), Collège de France, CNRS, INSERM, PSL Research University,
11, place Marcelin Berthelot, 75005 Paris, France.}
\begin{document}

\selectlanguage{english}

\maketitle

\tableofcontents

\thispagestyle{empty}

\begin{abstract}

We consider a spectrally positive Lévy process $X$ that does not drift to $+\infty$, viewed as coding for the genealogical structure of a (sub)critical branching process, in the sense of a contour or exploration process \cite{GaJa98,Lam10}.
We denote by $I$ the past infimum process defined for each $t\geq 0$ by $I_t\coloneqq \inf_{[0,t]} X$ and we let $\gamma$ be the unique time at which the excursion of the reflected process $X-I$ away from 0 attains its supremum. We prove that the pre-$\gamma$ and the post-$\gamma$ subpaths of this excursion are invariant under space-time reversal, which has several other consequences in terms of duality for excursions of Lévy processes.
It implies in particular that the local time process of this excursion is also invariant when seen backward from its height.  
As a corollary, we obtain that some (sub)critical branching processes such as the (sub)critical Crump-Mode-Jagers (CMJ) processes and the excursion away from 0 of the critical Feller diffusion, which is the width process of the continuum random tree, are invariant under time reversal from their extinction time.
\end{abstract}
\noindent
\textit{Mathematics Subject Classification (2000):}  Primary 60G51. Secondary 60G17, 60J55, 60J80.\\
\textit{Keywords:} space-time reversal; excursion measure; duality; Ray-Knight theorem; Williams decomposition; Lévy process conditioned to stay positive; Esty transform.

\pagestyle{headings}

\section{Introduction}

There exist several links between spectrally positive Lévy processes (SPLP) and branching processes that have been known and exploited for a few decades already. We can find their origin in the seminal works of Lamperti \cite{Lam67}, where it is shown that there exists a one-to-one correspondence, via a random time change (the so-called Lamperti transformation) between continuous state branching processes (CSBP) and possibly killed SPLP \cite{CaLaUB09,LaSiZw13}. 
Lévy processes also provide a suitable way of coding the genealogical structure of branching processes, through exploration or contour processes \cite{GaJa98,Lam10,LaUB16}.
Additionally, Ray-Knight type theorems link the local time processes of SPLP to the width processes of branching populations \cite{PaWa11,LaUB16}.  
\medskip

Here we consider a Lévy process with no negative jumps $X=(X_t,t \geq 0)$, with probability distribution $P_x = P(\cdot\vert X_0=x)$ and with Laplace exponent $\psi$, defined by 
\[E_0[\exp(-\lambda X_t)]= \exp(t\psi (\lambda)).
\]
Thanks to the Lévy-Kinchin formula, $\psi$ can be expressed as follows for any $\lambda\geq 0$,
\begin{align}\label{eq:lap-exp}
\psi(\lambda) = \alpha \lambda + \beta \lambda^2 + \int\limits_0^\infty \left(\e^{-\lambda r}-1+\lambda r \1_{r<1}\right)\Pi(\der r), 
\end{align}
where $\alpha \in\R$, $\beta\geq 0$ is called the \textit{Gaussian coefficient} and $\Pi$ is a $\sigma$- finite measure on $(0,\infty)$, called the \textit{Lévy measure}, satisfying $\int_{(0,\infty)}(r^2\wedge 1)\Pi(\der r)<\infty$. 
The paths of $X$ have finite variation a.s. if and only if $\beta=0$ and $\int_{(0,1]} r\Pi(\der r)<\infty$. Otherwise they have infinite variation a.s.

We consider the process reflected at its infimum $X-I$, where for each $t\geq 0$ we denote $I_t \coloneqq \inf_{[0,t]}X$.
A result due to Rogozin \cite{Rog66} states that for SPLP processes $0$ is always regular for $(-\infty,0)$, so is also regular for itself for the reflected process (and it is regular for $(0,+\infty)$ if and only if $X$ has infinite variation paths a.s.).
We know from general theory for Markov process that there exists a local time at 0 for $X-I$, here denoted by $(L_t, t \geq 0)$ that can be defined as the unique (up to a multiplicative constant) adapted additive functional that grows exactly on the zeros of $X-I$.
Furthermore, the fact that $X$ has no negative jumps entails that $-I$ satisfies these conditions, so it is an explicit local time for the reflected process.
Then, its right-continuous inverse
\begin{align*}
\tau_t \coloneqq \inf \left\{s>0: -I_s>t \right\}
\end{align*}
is the same as $T_{-t}= \inf\{s\geq 0: \ X_s<-t\}$, the first hitting time of $(-\infty,-t)$. It is a (possibly killed) subordinator whose jumps coincide exactly with the excursion intervals of $X-I$ so it represents the appropriate time scale for the so-called \textit{excursion process}, that we will now describe.

We let $\mathcal{E}$ be the space of real-valued càdlàg functions with finite lifetime $V\in[0,\infty)$, and we denote by $\partial$ a topologically isolated state, called \textit{cemetery point}. 
Define the excursion process $\epsilon = (\epsilon_t, 0< t\leq -I_{\infty})$, taking values in $\mathcal{E}\cup \partial$ as follows 
\begin{align*}
\epsilon_t \coloneqq \left\{
\begin{array}{ll}
		\left( (X_{\tau_{t-} + s} - I_{\tau_{t-}}, \ 0\leq s\leq \tau_t-\tau_{t-}\right),  & \text{if} \ \tau_t-\tau_{t-}>0, \\
		\partial, & \text{if } \ \tau_t-\tau_{t-}=0, \ \text{or} \ t=\infty,
	\end{array}
 \right.  \ \text{for} \ t\geq 0.
\end{align*}
Then according to Itô's excursion theory, $(t,\epsilon_t)_{t\geq 0}$ is a Poisson point process, possibly stopped at the first excursion with infinite lifetime (which arrives only and a.s. when $X$ drifts to $+\infty$). Its intensity is $\der t \ \underline{n}(\der \varepsilon)$, where $\underline{n}$ is a measure on $\mathcal{E}$ called the \textit{excursion measure}.
We refer to \cite[Chapter~IV]{Ber96} for further details.

From now on, we assume that $X$ is (sub)critical, meaning it does not drift to $+\infty$, which is equivalent to $\psi^\prime(0+)\geq 0$. 

Let $\varepsilon$ be the generic excursion of $X-I$ away from $0$ and $\gamma$ the first instant at which this excursion attains its supremum, that is
\begin{align*}
\gamma = \gamma(\varepsilon) = \inf\{s>0: \varepsilon_s=\overline{\varepsilon}_s\},
\end{align*}
where $\overline{\varepsilon}_s = \sup_{[0,s]}\varepsilon$.
Define the space-time-reversal transformation $\rho$ for any excursion $\omega\in \mathcal{E}$, as $\rho\circ \omega \coloneqq (\omega_{V-}-\omega_{(V-s)-}, 0\leq s\leq V)$, where by convention $\omega_{0-}=\omega_{0}$.  We also call it \textit{rotation}.
We are interested in the disintegration of $\varepsilon$ at $\gamma$. 
We prove that the pre-supremum and the post-supremum processes, denoted respectively  $\overset{\leftarrow}{\varepsilon}=k_\gamma\circ \varepsilon = (\varepsilon_s,0\leq s\leq\gamma)$ and $\overset{\rightarrow}{\varepsilon} = \theta^\prime_\gamma\circ\varepsilon:=(\varepsilon_{\gamma+s}-\varepsilon_\gamma,0\leq s\leq V-\gamma)$, are both invariant for this space-time-reversal transformation under the measure $\underline{n}$.
Moreover, this results implies the following theorem, for which we need first to define the functional $\chi:\mathcal{E}\rightarrow\mathcal{E}$ as
\begin{align*}
\chi \left( \varepsilon \right) \coloneqq \left[\rho\left( k_\gamma\circ \varepsilon\right) , \rho\left(\theta^\prime_\gamma\circ \varepsilon \right)+\varepsilon_\gamma \right],
\end{align*}
where for any two elements $\omega_1,\omega_2\in\mathcal{E}$, $[\omega_1,\omega_2]$ stands for their concatenation.

\begin{restatable}{theo}{thmain}
\label{th:main}
For every measurable functional $F:\mathcal{E}\rightarrow \R_+$ we have
\begin{align*}
\underline{n}\left( F \right) = \underline{n}\left( F\circ \chi \right)
\end{align*}
\end{restatable}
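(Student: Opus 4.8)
The plan is to obtain Theorem~\ref{th:main} from a Williams-type disintegration of the excursion measure at the supremum, combined with time-reversal duality for L\'evy processes. Write $M:=\overline{\varepsilon}_V$ for the height of the excursion. I would first establish the finer statement announced above: under $\underline{n}$, conditionally on $M$, the pre-supremum path $\overset{\leftarrow}{\varepsilon}$ and the post-supremum path $\overset{\rightarrow}{\varepsilon}$ are independent, and each of them is individually invariant under the space-time reversal $\rho$. Assuming this, the theorem follows at once: for fixed $m$, the functional $\chi$ acts on $\{M=m\}$ by replacing $\overset{\leftarrow}{\varepsilon}$ with $\rho(\overset{\leftarrow}{\varepsilon})$, $\overset{\rightarrow}{\varepsilon}$ with $\rho(\overset{\rightarrow}{\varepsilon})$, and then translating the second block upward by the deterministic quantity $\varepsilon_\gamma=m$; a product measure whose two factors are $\rho$-invariant is left unchanged by such an operation, and integrating over the $\underline{n}$-distribution of $M$ gives $\underline{n}(F)=\underline{n}(F\circ\chi)$ for every measurable $F\colon\mathcal{E}\to\R_+$. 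It remains to prove (a) the disintegration together with the identification of the two conditional laws, and (b) that each of these laws is $\rho$-invariant.

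For (a) I would argue as is classical for spectrally one-sided L\'evy processes. Fix $a>0$; on $\{M>a\}$ decompose $\varepsilon$ at the first-passage time $\sigma_a=\inf\{s:\varepsilon_s\ge a\}$. By the Markov property of $\varepsilon$ under $\underline{n}$ and the strong Markov property of $X$, conditionally on $\varepsilon_{\sigma_a}$ the post-$\sigma_a$ path is the law of $X$ started from $\varepsilon_{\sigma_a}$ and killed on hitting $0$, while the pre-$\sigma_a$ path is governed by $\underline{n}$ restricted to $\{M>a\}$ and run up to $\sigma_a$. The mass $\underline{n}(M>a)$ and the overshoot law of $\varepsilon$ across $a$ are computed in closed form from the two-sided exit problem for $X$ via the scale function $W$ of the spectrally negative process $-X$, normalised by $\int_0^\infty\e^{-\lambda x}W(x)\der x=1/\psi(\lambda)$. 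Differentiating in $a$ gives the $\underline{n}$-law of $M$, and passing to the limit $a\uparrow m$ --- an entrance-law argument that must keep track of whether $X$ attains $m$ continuously or by an a.s.\ unique jump, which is where the finite- versus infinite-variation dichotomy intervenes --- identifies the two conditional laws: up to affine transformations of the space variable they are $h$-transforms of $X$ killed on leaving $(0,m)$ (that is, killed at $0$ and on $[m,\infty)$), the $h$-functions being the corresponding two-sided exit probabilities, which are expressed through $W$ and $x\mapsto W(m-x)$. In the language of the subject, the pre-supremum law is a version of the L\'evy process conditioned to stay positive, killed on reaching $m$, and the post-supremum law is an Esty-type $h$-transform; conditional independence of the two blocks is built into this decomposition, and I would cite the references on processes conditioned to stay positive, the Williams decomposition and the Ray--Knight theorems rather than reprove these facts.

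The heart of the matter is (b). The tool is the time-reversal theorem for Markov processes (Nagasawa): reversing an $h$-process from its killing time produces the $\widehat h$-process of the time-reversed dual. For a L\'evy process the dual of $X$ is its spatial reflection $-X$, by the identity $(X_{(t-s)-}-X_t)_{0\le s\le t}\overset{d}{=}(-X_s)_{0\le s\le t}$ under $P_0$; since $-X$ is spectrally negative with the same Laplace exponent $\psi$, the scale function of the dual is again $W$. The two identities that make the argument close are the reflect-and-transpose symmetry $p^{(0,m)}_t(x,y)=p^{(0,m)}_t(m-y,m-x)$ of the transition kernel of $X$ killed on leaving $(0,m)$ --- a consequence of the above duality together with the spatial homogeneity of $X$ --- and the fact that the two exit probabilities $W(\cdot)$ and $W(m-\cdot)$ are exchanged by the reflection $x\mapsto m-x$. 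Combining these, one checks that the space-time reversal $\rho$ applied to the post-supremum block reproduces exactly the same $h$-transform (the spatial reflection in $\rho$ cancels the one introduced by passing to the dual), so that block is $\rho$-invariant; the same computation in its dual form settles the pre-supremum block. The delicate point --- and the step I expect to be the main obstacle --- is the bookkeeping: following the jump by which the supremum is attained through the reversal, checking that the reversed blocks concatenate into an admissible excursion (this is where the convention $\omega_{0-}=\omega_0$ and the creeping of $X$ at the extremities of the blocks are used), and verifying that the $h$-functions and killing mechanisms match on the nose. With (a) and (b) in hand the reduction in the first paragraph finishes the proof.
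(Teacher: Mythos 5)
Your reduction of the theorem to (i) a Williams-type disintegration of $\underline{n}$ at the supremum with conditional independence of the two blocks and (ii) the individual $\rho$-invariance of each block is exactly the architecture of the paper's proof: the paper takes the conditional independence and the absolute continuity of $\underline{n}(\varepsilon_\gamma\in\der x)$ from Duquesne's Theorem~4.10 (and from Millar, Greenwood--Pitman and Chaumont in the finite-variation case) and combines it with Propositions~\ref{prop:pre-sup} and~\ref{prop:post-sup}, which assert precisely the $\rho$-invariance of $k_\gamma\circ\varepsilon$ and $\theta^\prime_\gamma\circ\varepsilon$ under $\underline{n}$; the conditional version given $\varepsilon_\gamma=x$ then comes for free because $\varepsilon_\gamma$ is itself a $\rho$-invariant functional of each block. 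Where you genuinely diverge is in the proof of (ii). The paper never passes through Nagasawa's theorem or the $h$-transform description: it proves the two propositions by applying the elementary duality $k_t\circ X\overset{\text{d}}{=}\rho\left(k_t\circ X\right)$ at an independent exponential time, decomposing the resulting identity over the excursion intervals of $X-I$ with the compensation formula, and inverting Laplace transforms in the time variable (which is what forces the continuity Lemmas~\ref{lemma:continuity-on-s} and~\ref{lemma:continuity-z} and accounts for most of Section~\ref{sec:results}). Your route is more conceptual; the paper's avoids any identification of entrance laws and harmonic functions.

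One concrete caveat on your step (b): for a spectrally \emph{positive} process the two blocks are not governed by the symmetric pair $W(\cdot)$, $W(m-\cdot)$. The post-supremum block is indeed the $W(m-\cdot)$-transform (equivalently $P^{\downarrow}$ killed at $T_{-m}$), but the pre-supremum block is $P^{\uparrow}$ killed on reaching $m$ and conditioned to hit it exactly, and for a SPLP the harmonic function defining $P^{\uparrow}$ is the renewal function of the \emph{descending} ladder height, i.e. $x\mapsto x$, not $W$; moreover $P_x(T_m<T_0)=1-W(m-x)/W(m)$, which is not $W(x)/W(m)$. So the clean ``reflection exchanges the two exit probabilities'' cancellation you invoke is the spectrally negative, creeping-at-both-ends picture; here the asymmetry of the ladder structure means each block's reversal must be matched against the dual $-X$ (which is spectrally negative) with the correct, non-symmetric pair of harmonic functions and entrance laws, and with the terminal jump of the pre-supremum block tracked through $\rho$. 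This is repairable, but as written the cancellation at the heart of (b) does not close.
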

\medskip

Williams \cite{Will74} studied the decomposition of the generic excursion $X-I$ at its maximum for Brownian motion, showing it consists in two independent Bessel processes of dimension 3, started at 0, running to encounter each other, and killed upon hitting the same independent random variable (see e.g. \cite[Chapter~XII]{ReYo91} or \cite[Section~5]{Cha94}). This result has been generalized to SPLP by Chaumont \cite{Cha96}, and for general Lévy processes by Duquesne \cite{Duq03}. The law of each, the pre-$\gamma$ and post-$\gamma$ subpaths is characterized in \cite{Cha96} and \cite{Duq03} in terms of the law of $X$ \textit{conditioned to stay positive}, denoted $P^\uparrow$. Our study differs from these works since here we show that this distributions are invariant under \textit{space-time-reversal}.
Rather our results provide in passing properties of reversal invariance under the law $P^\uparrow$ that we state at the end of Section~\ref{sec:results}.
For further details on path decomposition theorems for Lévy processes at the overall maximum, minimum and other random times, we refer to \cite{Mil77a,Mil77b,GrPi80,Don07}.

More recently, in \cite{AbDe09,HeDe13} the authors also establish Williams decompositions under the excursion measure for the exploration process associated with the Lévy continuum random tree and super-processes with a spatially non-homogeneous quadratic branching mechanism. Several properties of these branching processes are then derived from these decompositions, such as a closed formula for the probability of hitting zero for a CSBP with immigration. In \cite{HeDe13}, the Q-process is obtained by looking at the super-process from the root and letting the extinction time tend to infinity. Moreover, an equivalent of the Esty time reversal from \cite{KlRoSa07} is given in a continuous setting.

Another related result is obtained by Miermont in \cite{Mie01} concerning a similar decomposition via the Vervaat's transform. 
Proposition 1 in \cite{Mie01} applied to SPLP of infinite variation not drifting to $+\infty$ and having bi-continuous marginal densities w.r.t. to Lebesgue measure, has the following implications: the excursion above the infimum conditioned to have a duration equal to $l$ is well defined, and if we cut this excursion at a uniform point $\upsilon$ and we concatenate the post-$\upsilon$ and the pre-$\upsilon$ subpaths (in this order), we get a Lévy bridge going from $0$ to $0$ in $l$ units of time (which is well defined under these hypotheses). Such a bridge is clearly invariant by rotation and hence, by conditioning respectively on the events $\{\upsilon<\gamma\}$ and $\{\upsilon>\gamma\}$, we could have obtained that the laws of $\overset{\leftarrow}{\varepsilon}$ and $\overset{\rightarrow}{\varepsilon} $ are also invariant by rotation.
This approach provides an alternative way of obtaining some of our results under the technical hypothesis of continuity on the marginal densities.

\medskip

A first consequence of  Theorem~\ref{th:main} is the invariance under time reversal of the local time process of the excursion $X-I$ away from 0.
To be more specific, define the local time process $(\Gamma(\varepsilon,r),r\geq 0)$ for the canonical excursion $\varepsilon\in \mathcal{E}$ as a Borel function satisfying
\begin{align}\label{eq:loc-time-inf-var}
\int\limits_{0}^{V(\varepsilon)}\phi\left(\varepsilon_s\right) \der s = \int\limits_{0}^{\infty} \Gamma\left(\varepsilon,r \right) \phi(r)	\der r,
\end{align}
for any continuous function $\phi$ with compact support in $[0,\infty)$.
These local time processes are known to exist and be unique in the infinite variation case, named occupation densities, see for instance \cite{Ber96}.
When $X$ has finite variation we can define an equivalent process, taking values in $\N\cup\{+\infty\}$, as the number of times the excursion hits level $r$, i.e. 
\begin{align}\label{eq:loc-time-fin-var}
\Gamma\left(\varepsilon,r \right)  = \sum_{0\leq s\leq V} \1_{\{\varepsilon_s=r\}}.
\end{align} 

Then we can state the following result.

\begin{restatable}{corollary}{cormain}
\label{cor-rev-loc-time}
The local time process of the excursions of $X-I$ away from 0 is invariant under time reversal, that is
\begin{align}\label{eq:rev-loc-time}
\left( \Gamma\left(\varepsilon,r \right), 0\leq r \leq \varepsilon_\gamma \right)  \overset{\text{d}}{=}  \left( \Gamma\left(\varepsilon, \varepsilon_\gamma-r \right), 0\leq r \leq \varepsilon_\gamma \right).
\end{align}
\end{restatable}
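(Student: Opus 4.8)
The plan is to deduce \eqref{eq:rev-loc-time} from Theorem~\ref{th:main} by choosing a test functional $F$ that reads off the local time profile of the generic excursion and checking that precomposing with $\chi$ turns it into the functional that reads off the \emph{reversed} profile. So the first task is to see how the map $\omega\mapsto(\Gamma(\omega,r))_{r\ge 0}$ interacts with the three operations out of which $\chi$ is built. Two are immediate, for either definition \eqref{eq:loc-time-inf-var} or \eqref{eq:loc-time-fin-var}: local time is additive under concatenation, $\Gamma([\omega_1,\omega_2],\cdot)=\Gamma(\omega_1,\cdot)+\Gamma(\omega_2,\cdot)$ up to the single junction level, and covariant under vertical shifts, $\Gamma(\omega+c,r)=\Gamma(\omega,r-c)$. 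The third, the behaviour under the space-time reversal $\rho$, is the crux: since reversing time leaves the occupation measure of a path unchanged (Lebesgue measure being invariant under $s\mapsto V-s$) and since passing from left to right limits only alters a Lebesgue-null set of times, one should obtain $\Gamma(\rho(\omega),r)=\Gamma(\omega,\omega_{V-}-r)$.

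Granting these, I would apply them to the two blocks of $\chi(\varepsilon)=[\rho(k_\gamma\circ\varepsilon),\,\rho(\theta^\prime_\gamma\circ\varepsilon)+\varepsilon_\gamma]$. Writing the decomposition at $\gamma$ as $\Gamma(\varepsilon,r)=\Gamma(\overset{\leftarrow}{\varepsilon},r)+\Gamma(\overset{\rightarrow}{\varepsilon}+\varepsilon_\gamma,r)$ for $0\le r\le\varepsilon_\gamma$, and using that the terminal value of $k_\gamma\circ\varepsilon$ is $\varepsilon_\gamma$ (modulo the left-limit convention addressed below) while that of $\theta^\prime_\gamma\circ\varepsilon$ is $-\varepsilon_\gamma$, the two blocks should contribute $\Gamma(\overset{\leftarrow}{\varepsilon},\varepsilon_\gamma-r)$ and $\Gamma(\overset{\rightarrow}{\varepsilon}+\varepsilon_\gamma,\varepsilon_\gamma-r)$ respectively, so that $\Gamma(\chi(\varepsilon),r)=\Gamma(\varepsilon,\varepsilon_\gamma-r)$ on $[0,\varepsilon_\gamma]$, while the height is unchanged since $\sup\chi(\varepsilon)=\varepsilon_\gamma$. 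Taking $F(\omega)=g\bigl((\Gamma(\omega,r))_{0\le r\le\varepsilon_\gamma(\omega)}\bigr)$ for an arbitrary bounded measurable $g$, Theorem~\ref{th:main} then yields $\underline{n}\bigl(g((\Gamma(\varepsilon,r))_r)\bigr)=\underline{n}\bigl(g((\Gamma(\varepsilon,\varepsilon_\gamma-r))_r)\bigr)$, which is exactly \eqref{eq:rev-loc-time}, understood as an identity of laws under the $\sigma$-finite measure $\underline{n}$ (equivalently, under $\underline{n}$ conditioned on the value of the height $\varepsilon_\gamma$).

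The part I expect to be delicate is precisely the endpoint and junction bookkeeping hidden in the step $\Gamma(\rho(\omega),r)=\Gamma(\omega,\omega_{V-}-r)$. Because $\rho$ is defined through left limits, one must be careful about what the pre-$\gamma$ block contributes near the maximal level and about the value at level $0$, and must check that the pre-$\gamma$ and post-$\gamma$ contributions genuinely reassemble into the full reversed profile; in the infinite-variation case this should hinge on the occupation density of $\varepsilon$ being continuous and vanishing at the two extreme levels $0$ and $\varepsilon_\gamma$, while in the finite-variation case one should instead argue directly with the visit counts \eqref{eq:loc-time-fin-var}, using that the supremum is attained exactly once $\underline{n}$-a.e. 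An alternative organisation, which sidesteps some of this, is to first identify $(\Gamma(\varepsilon,r))_{r\ge 0}$ under $\underline{n}$, via a Ray--Knight theorem, with the canonical excursion measure of a continuous-state branching process with branching mechanism $\psi$, so that \eqref{eq:rev-loc-time} becomes the statement that this excursion measure is invariant under time reversal from its extinction level, Theorem~\ref{th:main} supplying the path decomposition that drives that invariance.
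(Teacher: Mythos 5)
Your proposal is correct and takes essentially the same route as the paper: the paper likewise identifies, pathwise, the local-time profile of $\chi(\varepsilon)$ with the reversed profile $\Gamma(\varepsilon,\varepsilon_\gamma-\cdot)$ — via a change of variables in the occupation-density integral split at $\gamma$, rather than your block-by-block transformation rules — and then invokes Theorem~\ref{th:main} to transfer equality in law. (One small point: with the paper's convention $[\rho(\omega)]_t=\omega_V-\omega_{(V-t)-}$, the reference level in your reversal formula is the terminal value $\omega_V=\varepsilon_\gamma$ rather than $\omega_{V-}$, which disposes of the endpoint bookkeeping you flag.)
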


\medskip

As we previously pointed out, the main motivation that led us to look into this pathwise decomposition comes from \textit{branching processes}, that is stochastic processes with non-negative values satisfying the \textit{branching property}. This means that for any $x,y>0$, the process started at $x+y$ has the same distribution as a sum of two independent copies of itself, starting respectively at $x$ and $y$.
The simplest branching processes are those in discrete time and state space, the well-known Bienaymé-Galton-Watson (GW) processes \cite{AthNe72}. In the case of discrete time and continuous state-space, we use the term \textit{Jirina processes} as in \cite{Lam10}. For continuous time and discrete state space we speak of Crump-Mode-Jagers (CMJ) processes and finally the so-called \textit{continuous state branching process} (CSBP) for continuous time and state spaces. 
Our results concern mainly the latter two, so we will spend more time specifying their characteristics in Section~\ref{sec:applications}, but we refer to \cite{AthNe72,Jag75} for the general theory of branching processes.

\textit{Splitting trees} are random trees formed by discrete particles that behave independently from each other, have i.i.d. lifetime durations (possibly infinite), and give birth to i.i.d. copies of themselves during their lives (single births). 
The excursion of $X-I$ can be viewed, in the finite variation case, as the contour process of a (sub)critical splitting tree and then its local time process is a CMJ \cite{Lam10}. 
In the infinite variation case, under some mild assumptions, this excursion codes the genealogy of a \textit{totally ordered measured} (TOM) tree satisfying the splitting property, which are the continuum analogue of chronological trees in the setting of real trees, as it is shown in \cite{LaUB16}. 
Hence, Theorem \ref{th:main} leads in particular to the invariance under time reversal from its extinction time of the (sub)-critical CMJ process which is the local time of the SPLP characterized by \eqref{eq:lap-exp}. The same holds for the excursion away from 0 of the \textit{critical Feller diffusion} (the CSBP with branching mechanism $\psi(\lambda) = \lambda^2$, see \cite{DuGa02} and Subsection~\ref{subsec:CRT} for the details on this definition), which is the width process of the continuum random tree, \cite{Ald91}.
This can be summarized in the following corollary of Theorem~\ref{th:main}.

\begin{cor}
\label{cor-rev-loc-time-bis}
The (sub)critical CMJ branching process and the excursion away from 0 of the critical Feller diffusion, are invariant under time reversal from their extinction time.
\end{cor}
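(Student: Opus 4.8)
The plan is to derive both statements directly from Corollary~\ref{cor-rev-loc-time}. In each case one identifies the branching process, run from $0$ up to its extinction time, with the local time process $(\Gamma(\varepsilon,r),\,0\le r\le\varepsilon_\gamma)$ of the generic excursion of $X-I$ away from $0$, and the extinction time with the height $\varepsilon_\gamma=\overline\varepsilon_V$ of that excursion; then \eqref{eq:rev-loc-time} is exactly the asserted invariance under time reversal. Since $\underline n$ is $\sigma$-finite, one first works under the probability measure obtained by conditioning $\underline n$ on $\{\varepsilon_\gamma>a\}$ for a fixed $a>0$ and passes to the limit $a\downarrow0$, or equivalently uses the explicit disintegration of $\underline n$ discussed in Section~\ref{sec:results}; the identity in law \eqref{eq:rev-loc-time} is stable under this operation.

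For the CMJ process, assume $X$ has finite variation paths. By \cite{Lam10} a generic excursion of $X-I$ away from $0$ is the contour process of a single (sub)critical splitting tree, and its local time at level $r$, defined by \eqref{eq:loc-time-fin-var} as the number of visits of the excursion to $r$, is the number of individuals of that tree alive at time $r$, i.e.\ the value at time $r$ of the associated CMJ process. Because the excursion is positive on $(0,\varepsilon_\gamma)$ and first attains its maximum at $\gamma$, the last level it visits is a.s.\ $\varepsilon_\gamma$, which is therefore the extinction time of this CMJ. Corollary~\ref{cor-rev-loc-time} then gives the equality in law of this CMJ read up to extinction and read backwards from extinction, and the conditioning limit above transfers it to the (sub)critical CMJ process itself.

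For the critical Feller diffusion, take $\psi(\lambda)=\lambda^2$, so that $X$ is Brownian motion and $X-I$ is reflected Brownian motion. By the Ray-Knight theorem in its excursion form (see \cite{DuGa02} and the references therein), the occupation-density process $(\Gamma(\varepsilon,r),\,r\ge0)$ of a generic excursion of $X-I$ away from $0$ has the law of the excursion away from $0$ of the critical Feller diffusion, which by \cite{Ald91} is the width process of the continuum random tree; its lifetime is $\varepsilon_\gamma$. Applying Corollary~\ref{cor-rev-loc-time} verbatim yields the invariance of this excursion under time reversal from its extinction time.

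The only point requiring care is the passage from the infinite measure $\underline n$ to a genuine law of a single branching object, together with the verification that the last level visited by $\varepsilon$ coincides a.s.\ with the extinction time of the coded branching process; both are handled by the conditioning argument indicated above and by elementary path properties of $\varepsilon$ already recorded in \cite{Lam10,LaUB16,DuGa02}. All the probabilistic content is contained in Corollary~\ref{cor-rev-loc-time}.
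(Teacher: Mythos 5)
Your argument is essentially the paper's own: identify the (sub)critical CMJ with the local time of the contour excursion of $X-I$ via \cite{Lam10} (and, for $\psi(\lambda)=\lambda^2$, the Feller excursion with the occupation densities of the Brownian excursion via the Ray--Knight theorem), note that the extinction time is the excursion height $\varepsilon_\gamma$, and then apply Corollary~\ref{cor-rev-loc-time} (equivalently Theorem~\ref{th:main}), which is exactly how the paper concludes. The extra care you take about normalizing the $\sigma$-finite measure $\underline{n}$ is harmless but not needed, since the invariance can be read directly as an identity of image measures under $\underline{n}$; the proposal is correct.
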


Similar results concerning the duality by time-reversal of branching processes have been given in the litterature. In particular, in \cite{AlPo05} we can find a time-reversal invariance principle for the linear birth and death process in the critical case, when the process is conditioned on the number of individuals at the time of reversal to be equal to $n$. As suggested by the authors, the rescaled limit of the time-reversed process when $n\rightarrow\infty$, is the Feller branching diffusion. This suggests an alternative way of obtaining the second result in the previous Corollary.
See also \cite{Est75} and more recently \cite{AlRo02,KlRoSa07,HeDe13} for the treatment of the reverse of Galton-Watson processes and specifically the \textit{Esty time reversal}, which is the limit obtained by conditioning a GW process in negative time upon entering the state 0 (extinction) at time 0 and starting in the state 1 at time $-n$, when $n$ tends to $+\infty$.
We also refer to \cite{BiDe16} for a time reversal property for the ancestor counting process of a stationary CSBP with sub-critical quadratic branching mechanism.

This paper is a follow-up to \cite{DaLa15}, where we have obtained a property of invariance under time-reversal, from a deterministic time $T$, for the population size process of certain random forests. The latter are defined as a sequence of independent splitting trees stopped at the first tree surviving up to time $T$. 
In \cite{DaLa15} we focused on the time-reversal from a deterministic time $T$ whereas here we are interested in the same property from the extinction time of the process.
It is worth stressing that, besides the implications concerning branching processes, some of our lemmas are interesting in their own right since they provide some invariance results for subpaths of SPLP.

\medskip

The paper is organized as follows. In a short Section~\ref{sec:preliminaries} we introduce some preliminary notions and notation. It is followed up by Section~\ref{sec:results}, which contains our main results on the path decompositions of SPLP reflected at their infimum under the excursion measure. In Section~\ref{sec:applications} we recall some notions linking SPLP to branching processes and give the main implications of our results in the context of the latter. Finally Section~\ref{sec:proofs} is devoted to completing the remaining demonstrations.

\section{Preliminaries}\label{sec:preliminaries}

\subsubsection*{Basic notation}
Let $\mathcal{B}(\R)$ denote the Borel $\sigma$-field of $\R$. 
Consider the space $\D(\R_+,\R)$ (or simply $\D$) of càdlàg functions $\omega$ from $\R_+$ into the measurable space $(\R,\mathcal{B}(\R))$ endowed with Skorokhod topology \cite{JaShi03}. Denote the corresponding Borel $\sigma$-field by $\mathcal{B}(\D)$. 
Define the \textit{lifetime} of a path $\omega \in \D$ as $V=V(\omega)\coloneqq \inf\{t\geq 0: \omega(s)=\omega(t), \forall s\geq t\}$, with the convention $\inf \emptyset = \infty$.
Here $\omega(t-)$ stands for the left limit of $\omega$ at $t\in\R_+$, $\Delta \omega(t)= \omega(t)-\omega(t-)$ for the size of the (possible) jump at $t\leq V$ and we adopt the usual convention $\omega(0-)=\omega(0)$. 
The subspace of functions in $\D$ with finite lifetime is denoted $\mathcal{E}$ as in the introduction. We will consider invariably $\omega\in  \mathcal{E}$ as a function defined on $[0, V(\omega)]$ or as its prolongation to $\R_+$ obtained by stopping. 

We consider stochastic processes, on the probability space $(\D,\mathcal{B}(\D),P)$, say $X=\left(X_t,t\geq 0\right)$, also called the coordinate process, having $X_t=X_t(\omega)=\omega(t)$. In particular, we will consider only processes with no negative jumps, that is such that $\Delta X_t \in\R_+$ for every $t\geq 0$.
The canonical filtration is denoted by $(\mathscr{F}_t)_{t\geq 0}$ and we let $\mathcal{P}(E)$ be the collection of all probability measures on any space $E$. We use the notation  $P_x(X\in\cdot ) = P \left( X\in\cdot \middle\vert X_0 = x\right)$. In the absence of subscript the process is considered to start at $0$ a.s.

Define by $T_A \coloneqq \inf\{t> 0: X_t\in A\}$, the first hitting time of the set $A\in\mathcal{B}(\R)$, with the conventions $T_0=T_{\{0\}}$, and for any $x>0$, $T_{-x} =T_{(-\infty,-x)}$, $T_x =T_{(x,+\infty)}$.
Note that in general  $ T_{\{\pm x\}} \neq T_{\pm x}$,  for $x>0$. However, since $X$ has no negative jumps, $X$ is a.s. continuous at $T_{-x}$ for all $x>0$, and then it holds that $ T_{\{-x\}} = T_{-x} = T_{(-\infty,-x)}$ a.s.

As usual, for real valued functions, $\Vert \cdot\Vert_\infty$ stands for the uniform norm and $\Vert \omega \Vert_T \coloneqq \sup_{[0,T]}\vert \omega \vert$ for the supremum over $[0,T]$.

\subsubsection*{Some path transformations of càdlàg functions}

In this subsection we will define some families of operators on the space of càdlàg functions $\omega\in\D$:
\begin{itemize}
\item the classical \textit{shift operators}, $\theta_s, s\in \R_+$, defined by
\[
[\theta_s(\omega)]_t\coloneqq \omega_{s+t}, \qquad \forall t\in \R_+
\]
\item the non-standard shift operators, $\theta^\prime_s, s\in \R_+$, defined by
\[
[\theta^\prime_s(\omega)]_t\coloneqq \omega_{s+t}-\omega_{s}, \qquad \forall t\in \R_+
\]
\item the \textit{killing operators}, $k_s$, $s\in\R_+$ , defined by
\[
[k_s(\omega)]_t \coloneqq \left\{
	\begin{array}{ll}
		\omega_t, & \mbox{if } t < s\ \\
		\omega_s, & \mbox{otherwise}
	\end{array}
\right.
\]
the killing operator can be generalized to killing at random times, for instance $k_{T_A}(X) = k_{T_A(X)}(X)$, denotes the process $X$, \textit{killed at the first passage into $A$}. It is easy to see that if $X$ is a Markov process, so is $k_{T_A}(X)$. We set 
\[
k_0 (\omega) \equiv \omega_0.
\]
What we call killed path here, is more commonly denominated \textit{stopped} path. The difference is that killing usually refers to the path being sent to an isolated state after the killing time, whereas here it remains constant to a real value. We highlight our interest in keeping track of the final jump of the functions we study, which justifies this choice.

\item the \textit{space-time-reversal} mapping $\rho_s$, $s\in\R_+^*$, as
\[
[\rho_s(\omega)]_t \coloneqq 
\left\{
	\begin{array}{ll}
		\omega_s - \omega_{(s-t)-} & \quad \forall t\in[0,s]\ \\
		\omega_s - \omega_{0} & \mbox{if } t > s
	\end{array}
\right.
\]
and when $V(\omega)<+\infty$, we call \textit{rotation}, denoted simply by $\rho$, the space-time-reversal operator at the lifetime  of a path, that is $\rho=\rho_V$.
Notice that $[\rho_s(\omega)]_0=\Delta \omega_s$ (possibly $\neq 0$).
\end{itemize}
The notations $P\circ \theta_s^{-1}$, $P\circ k_s^{-1}$ and $P\circ \rho^{-1}$ stand for the law of the shifted, killed and space-time-reversed processes when $P$ is the law of $X$.

\medskip

For a sequence of functions in the same state space, say $(\omega_i)_{i\geq 1}$ with lifetimes $(V_i, i\geq 1)$, we define a new process by the concatenation of the terms of the sequence, denoted by
\[
 [\omega_1,\omega_2,\dots]
\]
where the juxtaposition of terms is considered to stop at the first element with infinite lifetime. For instance, if $V(\omega_1)<+\infty$ and $V(\omega_2)=+\infty$, then for every $n\geq 2$
\[
 [\omega_1,\omega_2, \ldots, \omega_n]_t = 
\left\{
	\begin{array}{ll}
		\omega_{1,t}& \mbox{if } 0\leq t\leq V(\omega_1) \\
		\omega_{2,t-V(\omega_1)} & t > V(\omega_1)
	\end{array}
\right..
\]
Notice that a concatenation of càdlàg functions thus defined, might not be a càdlàg function, since, for instance, in the case where $n=2$, the first function might end with a jump, so the new function $[\omega_1,\omega_2]$ will be càdlàg only if $\omega_{1,V_1} = \omega_{2,0}$. This is always the case  in our applications, that is why we choose to concatenate functions in this less usual way, which has the property of recording the final jump of each path.

\subsubsection*{Skorokhod topology}

As mentioned before, we consider the space of càdlàg functions $\D$ and the subset of excursions $\mathcal{E}$ (paths with finite lifetime), to be endowed with the topology induced by Skorokhod's topology, which makes $\D$ a Polish space. We refer to \cite{JaShi03} for further details on this topology, which can be characterized as follows: a sequence $(\varepsilon_n)$ on $\mathcal{E}$ converges to $\varepsilon$ when $n\rightarrow\infty$, if and only if there exists a sequence $(\lambda_n)$ of \textit{changes of time} (continuous, strictly increasing functions, with $\lambda_n(0)=0$ and $\lambda_n(t)\uparrow \infty$ when $t\uparrow \infty$), such that $\Vert \lambda_n - \textrm{Id}\Vert_\infty \rightarrow 0$ and $\Vert \varepsilon_n\circ \lambda_n -\varepsilon\Vert_T \rightarrow 0$ for all $T\geq 0$.
The space of continuous, bounded functions from $\mathcal{E}$ into $\R_+$ with respect to the Skorokhod topology, will be denoted by $\mathcal{C}_b(\mathcal{E},\R_+)$.

\subsubsection*{Properties of the Laplace exponent and scale function of a SPLP}
The Laplace exponent given by \eqref{eq:lap-exp} is infinitely differentiable, strictly convex (when $\Pi \not \equiv 0$ or $\beta\neq 0$), $\psi(0)=0$ and $\psi(+\infty)=+\infty$.
Let $\eta\coloneqq \sup\{\lambda\geq 0: \psi(\lambda)=0\}$. Then we have that $\eta=0$ is the unique root of $\psi$, when  $\psi^\prime(0+)\geq 0$. Otherwise the Laplace exponent has two roots, 0 and $\eta>0$. 
It is known that for any $x>0$,
\[P_x\left(T_0<+\infty\right)=\e^{-\eta x}.
\]
More generally, there exists a unique continuous increasing function $W:[0,+\infty)\rightarrow[0,+\infty)$, called the \textit{scale function}, characterized by its Laplace transform,
\[
\int\limits_0^{+\infty} \e^{-\lambda x}W(x) \der x = \dfrac{1}{\psi(\lambda)}, \qquad \lambda>\eta,
\]
such that for any $0<x<a$,
\begin{equation} \label{eq:exit-two-side}
P_x\left(T_0<T_a\right) = \dfrac{W(a-x)}{W(a)}.
\end{equation}

\subsubsection*{Time-reversal duality for Lévy processes}

One of the key ingredients of our results is the duality property under time-reversal  of Lévy processes (see \cite[Chapter~II]{Ber96} for details). Roughly speaking, it states that if a path is space-time-reversed at a finite time horizon, the new path has the same distribution as the original process.
We will use the following formulation subsequently: for every fixed $t>0$ and every non-negative measurable function $F$ we have 
\begin{eqnarray}
&&E \left[ F\left(k_t\circ X \right)\right] = E \left[ F\left(\rho \circ(k_t\circ X)\right)\right].
\label{eq:duality}
\end{eqnarray}
By integrating over $t$, this result is still valid if the process is killed at an independently random finite time.

\section{Main results}\label{sec:results}

Throughout this section $X$ denotes a SPLP with Lévy measure $\Pi$ on $(0,+\infty)$, whose Laplace exponent denoted by $\psi$ is defined by \eqref{eq:lap-exp}.
As in the preliminaries, we let $P_x$ denote the law of the process conditioned on $X_0=x$. We assume $\psi^\prime(0+)\geq 0$, meaning we are in the (sub)critical regime.
Let $S_t\coloneqq \sup\{X_s, 0\leq s\leq t\}$ and $I_t\coloneqq \inf\{X_s, 0\leq s\leq t\}$ be the running supremum and the running infimum of the Lévy process $X$.

\subsection*{Pre-supremum processes}

We recall that $\underline{n}$ denotes the excursion measure of the process $X-I$ away from $0$. Let $g_t$ and $d_t$ be the left and right-end points of the excursion straddling $t$, denoted $e_t$, that is,
\begin{equation*}
e_t \coloneqq \left( X_{g_t+s}-I_t, 0\leq s\leq d_t-g_t\right).
\end{equation*}
Note that $V(e_t)=d_t-g_t$.

For any excursion $\varepsilon$ and any $s\in\R_+$ define its supremum $\overline \varepsilon_{s} \coloneqq \sup_{[0,s]} \varepsilon $ and the first instant where the supremum is attained on the interval $[0,s]$ , that is
\begin{align*}
& \gamma(s) = \gamma(s,\varepsilon) \coloneqq \argmax_{[0,s]} \varepsilon = \inf\left\{s^\prime\in[0,s]:\varepsilon(s^\prime)=\overline \varepsilon_s\right\}.
\end{align*}
A classical result \cite{Mil77a,Duq03} ensures that this instant is unique $P$-a.s. and $\underline{n}$-a.e. thanks to the regularity of 0 for $(-\infty,0)$. In general when using $\gamma$ and $V$, the dependence on the excursion under focus will be omitted unless there is a risk of confusion.

Define in a similar way, for the process $X$,
\begin{align*}
&\overline{\sigma}_t(X) \coloneqq \argmax_{[0,t]} X, \quad \text{and}
\\
&\underline{\sigma}_t(X) \coloneqq \arginf_{[0,t]} X = \sup\left\{s'\in[0,t]:I_{s'}=X_{s'-}\right\}.
\end{align*}
Note that  this last instant is also unique a.s. and that it is a simple `$\argmin$' in the infinite variation case.

Let $\omega\in \mathcal{E}$ be any path with finite lifetime. We are interested in the trajectories where the infimum is attained before the maximum, so let us define the event
\[
\A(\omega)\coloneqq \left\{ \arginf_{}\omega < \argmax_{} \omega\right\}.
\]
We will be interested in particular in $\A(k_t\circ X)$ where
\[
\A(k_t\circ X)= \left\{\underline{\sigma}_t(X)< \overline{\sigma}_t(X)\right\} 
= \left\{S_{g_t}-X_{g_t}< \sup_{(0,t-g_t)} e_t\right\}.
\]

We can now state the next result. 

\begin{prop}\label{prop:pre-sup}
The pre-supremum process of the excursion of $X-I$ away from zero is invariant under time reversal, that is, for any measurable functional $h:\mathcal{E}\rightarrow \R_+$,
\begin{align}
\underline n\left(   h\left( k_{\gamma(V)}\circ \varepsilon\right) \right)=\underline n\left(   h\circ\rho \left( k_{\gamma(V)}\circ\varepsilon \right) \right). \label{eq:id-n}
\end{align}
\end{prop}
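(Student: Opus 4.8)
The plan is to transfer the identity \eqref{eq:id-n} from the excursion measure to the law of $X$, and then to apply the time-reversal duality \eqref{eq:duality}. By It\^o's excursion theory the excursions of $X-I$ away from $0$ form a Poisson point process of intensity $\der\ell\,\underline n(\der\varepsilon)$ in the local-time scale $\ell=-I$, and, $X$ not drifting to $+\infty$, this point process is not truncated. Hence, for a fixed level $a>0$, the first excursion $\mathbf e$ of $X-I$ to reach height $\ge a$ has law $\underline n(\,\cdot\mid\overline{\varepsilon}_V\ge a)$ --- a genuine probability, since $\underline n(\overline{\varepsilon}_V\ge a)<\infty$ by the exit estimate \eqref{eq:exit-two-side} (see \cite{Ber96}) --- and it is independent of the portion of $X$ preceding it. As $a\downarrow 0$ the events $\{\overline{\varepsilon}_V\ge a\}$ increase to a set of full $\underline n$-measure, so by monotone convergence it suffices to prove, for every $a>0$, that $k_\gamma\circ\mathbf e$ is invariant under $\rho$.

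I would then realise $\mathbf e$ concretely on the path of $X$. Let $\sigma_a:=\inf\{s:(X-I)_s\ge a\}$, a stopping time, and let $g<\sigma_a\le d$ be the endpoints of the straddling excursion interval, so $\mathbf e=(X_{g+s}-X_g,\,0\le s\le d-g)$. Because $X$ has no negative jumps, $d=\inf\{s>\sigma_a:X_s=X_g\}$; hence, by the strong Markov property at $\sigma_a$, the part of $\mathbf e$ after $\sigma_a$ is, conditionally on the overshoot $R:=(X-I)_{\sigma_a}\ge a$, a copy of $X$ under $P_R$ run until $T_0$ (a.s.\ finite, $X$ being (sub)critical) and, given $R$, independent of the part before $\sigma_a$. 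Since the latter part never exceeds $a\le R$, the supremum of $\mathbf e$, and so $\gamma$, is attained after $\sigma_a$; thus $k_\gamma\circ\mathbf e$ is the concatenation of the pre-$\sigma_a$ piece of $\mathbf e$ with the pre-maximum of that copy of $X$ under $P_R$ killed at $T_0$.

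The two pieces are then handled by duality. The common device is to embed each piece into a genuine $k_t$-stopped --- or independent-exponential-time-stopped --- sub-path of $X$, for which \eqref{eq:duality} applies, and to read off the reversed piece inside the reversed sub-path. For the part of $\mathbf e$ after $\sigma_a$ (a copy of $X$ under $P_R$ killed at $T_0$, up to its maximum) this uses the strong Markov property at $\sigma_a$ together with \eqref{eq:duality} and the downward creeping of $X$. For the pre-$\sigma_a$ piece --- the excursion of $X-I$ up to its first passage above $a$, anchored at the \emph{non}-stopping time $g$ --- the point is that on the event $\A(k_t\circ X)=\{\underline{\sigma}_t<\overline{\sigma}_t\}$ the overall maximum of $X$ on $[0,t]$ lies inside the last excursion $e_t$, strictly after $g_t$, so the pre-maximum trajectory of $k_t\circ X$ is a measurable functional of $k_t\circ X$ exposing exactly this piece; and one checks that $\A(k_t\circ X)$ is $P$-a.s.\ unchanged when $k_t\circ X$ is replaced by $\rho(k_t\circ X)$, since space-time reversal interchanges the positions of the argmax and of the argmin of $X$ on $[0,t]$ via $s\mapsto t-s$. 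The duality $k_t\circ X\overset{\text{d}}{=}\rho(k_t\circ X)$ then transfers reversal-invariance to this piece too; reassembling the two reversed pieces through the concatenation --- carrying along the terminal jump of each, which is why the non-standard concatenation of the preliminaries is used --- gives the invariance of $k_\gamma\circ\mathbf e$, hence \eqref{eq:id-n}.

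The main difficulty is bookkeeping rather than probabilistic depth. The left endpoint $g$ (resp.\ $g_t$) of a straddling excursion is not a stopping time, so one cannot reverse the relevant piece in isolation but must reverse a larger $k_t$-stopped sub-path of $X$ and then recognise the reversed excursion sitting inside it; the terminal jump of every sub-path must be carried correctly through the concatenations --- this is precisely what forces the shape of the map $\chi$ in Theorem~\ref{th:main}; and one needs the usual a.s.\ regularity (uniqueness of the argmax under $\underline n$ and $P$, downward creeping at first passages, and $P_R(T_0<\infty)=1$ in the (sub)critical case) to make the case distinctions --- supremum before or after $\sigma_a$, the event $\A$ versus its complement --- clean.
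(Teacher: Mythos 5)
Your reduction to the first excursion with height at least $a$ is sound (its law is indeed $\underline n(\cdot\mid \overline\varepsilon_V\ge a)$, a genuine probability measure, and letting $a\downarrow 0$ recovers $\underline n$), but the argument breaks at the reassembly step. Writing $k_\gamma\circ\mathbf e=[A,B]$ with $A$ the pre-$\sigma_a$ piece and $B$ the piece from $\sigma_a$ to the maximum, space-time reversal \emph{inverts the order of concatenation}: $\rho([A,B])=[\rho(B),\rho(A)+c]$ for the appropriate shift $c$. Hence individual reversal-invariance of $A$ and of $B$, even combined with their conditional independence given the overshoot, does not yield $[A,B]\overset{\text{d}}{=}[\rho(B),\rho(A)+c]$; it would only yield statements of the form $[A,B]\overset{\text{d}}{=}[B,A+c]$, which are false here because $A$ and $B$ have visibly different laws ($A$ stays below $a$ and ends at its first passage above $a$, while $\rho(B)$ does neither). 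The decomposition at $\sigma_a$ simply does not commute with $\rho$, so no piecewise argument of this shape can close. In addition, the claimed reversal-invariance of $B$ --- the pre-maximum of a copy of $X$ under $P_R$ killed at $T_0$ --- is essentially the statement being proved, restarted from $R$; asserting it ``by the strong Markov property, duality and creeping'' is circular.

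The paper avoids this by reversing the \emph{entire} pre-maximum segment in one stroke: it applies the duality \eqref{eq:duality} to a functional $F_1$ of the whole stopped path $k_t\circ X$ restricted to the event $\A(k_t\circ X)$, reading off $\left(X_{\underline{\sigma}_t+s}-X_{\underline{\sigma}_t-},\,0\le s\le\overline{\sigma}_t-\underline{\sigma}_t\right)$ on one side and its rotation on the other. The price is that the resulting identity concerns the excursion straddling an independent exponential time and carries \emph{different} weight factors on its two sides (one involving $\varepsilon_s$, the other $\varepsilon_{\gamma(s)}+I_t$); removing this bias is the real work --- the compensation formula, a two-stage choice of the auxiliary functions $g$ and $H$ exploiting the Markov property of $\underline{n}$, and a Laplace inversion in $s$ which requires the right-continuity established in Lemma~\ref{lemma:continuity-on-s}. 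None of this is mere bookkeeping; it is the substance of the proof, and your proposal does not supply a substitute for it.
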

\medskip

This result is based on the following lemmas, for which we need first to define the set $\mathcal{H}$ of functions of \textit{exponential type} in the lifetime of excursions. That is, measurable functions $f:\mathcal{E}\rightarrow \R_+$, such that there exist two non-negative constants $c$ and $C$ such that $f(\varepsilon)\e^{-cV(\varepsilon)}\leq C$, for every $\varepsilon\in\mathcal{E}$.

\begin{lemma}\label{lemma:continuity-on-s}
For any functional $f\in \mathcal{H}$, the following functions are right-continuous at every $s>0$
\begin{enumerate}[label=(\roman*)]
\item
$\underline n\left(   f\left( k_{\gamma(s)}\circ \varepsilon\right) \1_{\{s<V\}}\right),$
\item 
$\underline n\left(   f\left( k_{\gamma(V)}\circ \varepsilon\right) \1_{\{\gamma(V)<s<V\}}\right)$.
\end{enumerate}
\end{lemma}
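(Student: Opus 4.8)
The plan is to reduce both statements to a right-continuity property of excursion functionals, using It�'s excursion theory to transfer the question from the abstract measure $\underline{n}$ to the process $X$ itself, and then invoking path-regularity of $X$. First I would deal with (i). The key observation is that for $f\in\mathcal{H}$, the map $\varepsilon\mapsto f(k_{\gamma(s)}\circ\varepsilon)\1_{\{s<V\}}$ depends only on the restriction $k_s\circ\varepsilon$ of the excursion up to time $s$ together with the event $\{s<V\}$, and that $\gamma(s,\varepsilon)=\argmax_{[0,s]}\varepsilon$ is, $\underline{n}$-a.e., a continuous functional of $k_s\circ\varepsilon$ in the Skorokhod topology (uniqueness of the argmax holds $\underline{n}$-a.e. by the regularity of $0$ for $(-\infty,0)$, cited from \cite{Mil77a,Duq03}). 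So I would first establish that $\varepsilon\mapsto f(k_{\gamma(s)}\circ\varepsilon)\1_{\{s<V\}}$ is, for each fixed $s$, $\underline{n}$-a.e. equal to a functional that is continuous in $s$ from the right along $\underline{n}$-typical paths: if $s_n\downarrow s$ then $\overline{\varepsilon}_{s_n}\downarrow\overline{\varepsilon}_{s}$ and, on $\{s<V\}$ (which forces $\{s_n<V\}$ for $n$ large), $\gamma(s_n)\to\gamma(s)$ and $k_{\gamma(s_n)}\circ\varepsilon\to k_{\gamma(s)}\circ\varepsilon$. The indicator $\1_{\{s_n<V\}}\to\1_{\{s<V\}}$ for $\underline{n}$-a.e.\ $\varepsilon$ because $\underline{n}(V=s)=0$ (the lifetime has a diffuse law under $\underline{n}$; if one prefers to avoid this, note $\{s<V\}\subset\{s_n<V\}$ and $\underline{n}(s<V\leq s_n)\to 0$ as $n\to\infty$ by monotone convergence since $\underline{n}(V>s)<\infty$ is false in general — here one must instead use $\underline{n}(1-\e^{-V}>\dots)$ type bounds, see below).

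The second ingredient is a domination argument to pass the limit through $\underline{n}$. Since $f\in\mathcal{H}$ we have $f(k_{\gamma(s)}\circ\varepsilon)\leq C\e^{c V(k_{\gamma(s)}\circ\varepsilon)}=C\e^{c\gamma(s)}\leq C\e^{cs}$ — wait, that bound is too crude because $\gamma(s)\le s$ gives a bound uniform in $\varepsilon$ but then multiplied by $\1_{\{s<V\}}$, and $\underline{n}(s<V)=\infty$ is possible. The correct move is that $\gamma(s)\le s$ means $V(k_{\gamma(s)}\circ\varepsilon)=\gamma(s)\le s$ is bounded, so $f(k_{\gamma(s)}\circ\varepsilon)\le C\e^{cs}$ is a \emph{constant}, and then $\underline{n}(f(k_{\gamma(s)}\circ\varepsilon)\1_{\{s<V\}})\le C\e^{cs}\,\underline{n}(V>s)<\infty$ because $\underline{n}(V>s)=\underline{n}(\overline{\varepsilon}_s\ge 0,\, V>s)$; more precisely $\underline{n}(V>s)$ is finite for every $s>0$ since $\tau$ is a subordinator with finite L�vy measure mass above any level — indeed $\underline{n}(V>s)$ equals the rate of excursions of duration exceeding $s$, which is finite. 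With this uniform domination on a neighbourhood $(s,s+1]$ of $s$ (replacing $s$ by $s+1$ in the bound), dominated convergence applies and gives right-continuity of (i).

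For (ii), the same scheme applies but now the functional involves $\gamma(V)$, the global argmax, and the window is $\{\gamma(V)<s<V\}$. Here I would write $f(k_{\gamma(V)}\circ\varepsilon)\1_{\{\gamma(V)<s<V\}}$ and note that on $\{\gamma(V)<s\}$ one has $\gamma(V,\varepsilon)=\gamma(s,\varepsilon)$, so $k_{\gamma(V)}\circ\varepsilon=k_{\gamma(s)}\circ\varepsilon$ on that event and the functional coincides with $f(k_{\gamma(s)}\circ\varepsilon)\1_{\{\gamma(s)<s<V\}}\1_{\{\gamma(V)=\gamma(s)\}}$; the last indicator, on $\{\gamma(s)<s\}$, is $\underline{n}$-a.e.\ automatic by uniqueness of the global argmax combined with the fact that once the running supremum on $[0,s]$ is attained strictly before $s$ it will (generically) not be exceeded later — no, that is false, the excursion can climb higher after $s$. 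So instead I keep the indicator $\1_{\{\gamma(V)<s\}}$ literally: as $s_n\downarrow s$, $\1_{\{\gamma(V)<s_n\}}\downarrow\1_{\{\gamma(V)\le s\}}$, and $\underline{n}(\gamma(V)=s)=0$ by uniqueness/diffuseness, so $\1_{\{\gamma(V)<s_n\}}\to\1_{\{\gamma(V)<s\}}$ a.e.; similarly $\1_{\{s_n<V\}}\to\1_{\{s<V\}}$; and on $\{\gamma(V)<s<V\}$, for $n$ large $\gamma(V)<s_n<V$ too, so the functional value is unchanged. Domination is again by the constant $C\e^{c s_1}$ times $\1_{\{s<V\}}$, integrable against $\underline{n}$ as above, so dominated convergence closes the argument.

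The main obstacle I anticipate is the justification that the relevant indicator functions are $\underline{n}$-a.e.\ continuous in $s$ from the right, i.e.\ that $\underline{n}(V=s)=0$ and $\underline{n}(\gamma(V)=s)=0$ for each fixed $s>0$, and that $\gamma(s,\cdot)$ and $k_{\gamma(s)}\circ\cdot$ are genuinely $\underline{n}$-a.e.\ continuous in the Skorokhod sense — this last point requires care because the argmax functional is not continuous at paths where the supremum is attained at more than one time or at a jump, and one must verify that $\underline{n}$ gives no mass to such paths, which is exactly the regularity statement of $0$ for $(-\infty,0)$ invoked above, together with the absence of a fixed discontinuity (no atom of $\Pi$ is relevant here since jump sizes have a diffuse contribution under $\underline n$). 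Packaging these facts cleanly, rather than the convergence bookkeeping, is where the real work lies.
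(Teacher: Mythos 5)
Your overall strategy coincides with the paper's: take $s_n\downarrow s$, argue that for $\underline n$-typical excursions the killed path $k_{\gamma(s_n)}\circ\varepsilon$ eventually stabilises to $k_{\gamma(s)}\circ\varepsilon$ (the paper does this via the sets $\Upsilon_s(\delta)=\{\overline\varepsilon(s-\delta)=\overline\varepsilon(s+\delta)\}$ on which $\gamma(s_n)=\gamma(s)$ for $n$ large), control the lifetime indicators through $\underline n(s<V\le s_n)\to 0$, and use the bound $f(k_{\gamma(s)}\circ\varepsilon)\le C\e^{cs}$ together with $\underline n(V>s)<\infty$ to pass to the limit. Your correction that $V(k_{\gamma(s)}\circ\varepsilon)=\gamma(s)\le s$ makes the $\mathcal H$-bound a constant, and your treatment of (ii) via the inclusion $\{\gamma(V)<s\}\subset\{\gamma(V)<s_n\}$ and the shrinking event $\{s<V\le s_n\}$, are both in line with the paper.

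The genuine gap is the null-set fact on which everything hinges and which you explicitly defer: one must prove that for each fixed $s>0$, $\underline n\left(\varepsilon_s=\overline\varepsilon_s,\ s<V\right)=0$. The exceptional paths for the stabilisation $\gamma(s_n)=\gamma(s)$ are exactly those sitting at their running maximum at the deterministic time $s$ (not paths with multiple argmaxes, which is how you frame it), and likewise $\{\gamma(V)=s,\ s<V\}\subset\{\varepsilon_s=\overline\varepsilon_s,\ s<V\}$, so your assertion ``$\underline n(\gamma(V)=s)=0$ by uniqueness/diffuseness'' is not a proof but a restatement of what must be shown. The paper's argument is: apply the Markov property under $\underline n$ at a time $u<s$ to reduce to $P_0\left(S_{s-u}=X_{s-u}\right)$, observe that this event means the inverse ladder time subordinator $\mathscr L^{-1}$ hits the level $s-u$ exactly, and conclude it has probability zero because $\mathscr L^{-1}$ has zero drift (regularity of $0$ for $(-\infty,0)$ for a SPLP) and a driftless subordinator never creeps. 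Saying the needed input is ``exactly the regularity statement of $0$ for $(-\infty,0)$'' conflates the hypothesis with the argument; without the Markov-property reduction and the no-creeping step, the lemma is not proved, since the rest (domination, $\underline n(V>s)<\infty$, the monotone shrinking of $\{s<V\le s_n\}$) is routine. Two smaller slips: the inclusion you write, $\{s<V\}\subset\{s_n<V\}$, is backwards (since $s_n\ge s$), and your momentary claim that $\underline n(V>s)<\infty$ may fail is wrong for $s>0$ (you correct it later); also the claim $\underline n(V=s)=0$ is unnecessary and unproved---both the paper and your alternative route avoid it.
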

\begin{proof}
See Section~\ref{sec:proofs}.
\end{proof} 

\begin{lemma}\label{lemma:reverse-pre-sup}
For any functional $f\in \mathcal{H}$ and every $s\geq 0$, we have the following identities
\begin{enumerate}[label=(\roman*)]
\item
\begin{align}
\underline n\left(   f\left(  k_{\gamma(s)}\circ\varepsilon\right)\1_{\{s<V\}} \right)=\underline n\left(   f\circ\rho\left( k_{\gamma(s)}\circ \varepsilon\right) \1_{\{s<V\}} \right), \label{eq:id-n1}
\end{align}
\item 
\begin{align}
\underline{n}\left( f( k_{\gamma(V)}\circ\varepsilon) \1_{\{\gamma(V)<s<V\}} \right) = \underline{n}\left( f\circ \rho ( k_{\gamma(V)}\circ\varepsilon) \1_{\{\gamma(V)<s<V\}}\right). \label{eq:id-nV}
\end{align}
\end{enumerate}
\end{lemma}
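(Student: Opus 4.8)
The plan is to transfer both identities to statements about $X$ killed at a deterministic time, where the time-reversal duality \eqref{eq:duality} is available, to prove them first after a Laplace transform in $s$ (hence for a.e.\ $s$), and then to upgrade to every $s$ via the right-continuity of Lemma~\ref{lemma:continuity-on-s}. Note that $\rho$ preserves lifetimes, so $f\circ\rho\in\mathcal H$ whenever $f\in\mathcal H$; membership in $\mathcal H$ is what will keep all the Laplace transforms and the Fubini exchanges below absolutely convergent (for $q$ large). For \eqref{eq:id-n1}, fix such a $q$; by Fubini the left side, integrated against $\e^{-qs}\,\der s$, equals $\underline n\big(\int_0^V\e^{-qs}f(k_{\gamma(s)}\circ\varepsilon)\,\der s\big)$. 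I would then invoke It\^o's excursion theory for $X-I$ away from $0$ --- whose natural time scale is the subordinator $\tau_\cdot=T_{-\cdot}$, with $E[\e^{-q\tau_a}]=\e^{-a\psi^{-1}(q)}$ --- in the form of the master formula
\[
\underline n\Big(\int_0^V\e^{-qu}G(k_u\circ\varepsilon)\,\der u\Big)=\psi^{-1}(q)\,E\Big[\int_0^\infty\e^{-qt}\,G(k_{t-g_t}\circ e_t)\,\1_{\{g_t<t<d_t\}}\,\der t\Big],
\]
valid for every measurable $G$, where $e_t$ is the excursion of $X-I$ straddling $t$ and $g_t<d_t$ its endpoints. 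Applying this with $G$ such that $G(k_u\circ\varepsilon)=f(k_{\gamma(u)}\circ\varepsilon)$ (legitimate because $\gamma(u)$ is a function of $k_u\circ\varepsilon$), and likewise with $f\circ\rho$, reduces \eqref{eq:id-n1} to showing, for each $t$, that $E\big[f(Y_t)\,\1_{\{g_t<t<d_t\}}\big]=E\big[(f\circ\rho)(Y_t)\,\1_{\{g_t<t<d_t\}}\big]$, where $Y_t:=k_{\gamma(t-g_t)}\circ e_t$ is the observed pre-supremum fragment of the excursion straddling $t$.

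The crucial point is that $Y_t$ is a \emph{fixed} measurable functional $\mathcal G$ of the killed path $k_t\circ X$: from $k_t\circ X$ one recovers $I_t$, the argmin $g_t=\underline\sigma_t$, the re-zeroed post-$g_t$ piece $e_t$ observed on $[0,t-g_t]$, its first argmax, and kills there (with the convention $\mathcal G\equiv$ trivial path when $X_t=I_t$, which only matters in the finite-variation case). Since $\rho$ fixes the trivial path, \eqref{eq:duality} then reduces everything to the single assertion: \emph{for each $t$, the law of $\mathcal G(k_t\circ X)$ is invariant under $\rho$}. I would prove this by splitting on the event $\A(k_t\circ X)=\{\underline\sigma_t<\overline\sigma_t\}$, which is itself $\rho$-invariant. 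On $\A$ the fragment $\mathcal G(k_t\circ X)$ is exactly the segment of $X$ between its global argmin and its global argmax, re-zeroed; since space-time reversal interchanges the first argmax and the last argmin of a trajectory and carries that segment onto its own $\rho$-image, one obtains $\mathcal G\circ\rho=\rho\circ\mathcal G$ on $\A$ pointwise, so the $\A$-part of the assertion is immediate from \eqref{eq:duality} (here spectral positivity enters only to make the relevant argmin, argmax and first-passage times a.s.\ attained and unique, so that the left-limit bookkeeping in the definition of $\rho$ is harmless). On $\A^c$, where the running supremum of $X$ before $t$ already exceeds the straddling excursion, the clean commutation fails and a complementary argument is needed --- a further decomposition of the killed trajectory plus control of the degenerate contributions. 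Granting the $\A^c$-part, the two sides of \eqref{eq:id-n1} have the same Laplace transform in $s$, hence agree for a.e.\ $s$; both being right-continuous by Lemma~\ref{lemma:continuity-on-s}(i) (applied to $f$ and to $f\circ\rho$), they agree for all $s\ge0$.

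For \eqref{eq:id-nV}, I would argue along the same lines, with one extra ingredient. On $\{\gamma(V)<s<V\}$ one has $k_{\gamma(V)}\circ\varepsilon=k_{\gamma(s)}\circ\varepsilon$, and by the Markov property of $\underline n$ at time $s$ the additional requirement that the maximum reached by time $s$ be the \emph{global} one --- i.e.\ that the continuation of the excursion, which given $\mathscr F_s$ evolves as $X$ under $P_{\varepsilon_s}$ killed at $T_0$, not rise above $\overline\varepsilon_s=\varepsilon_{\gamma(s)}$ --- has conditional probability $W(\varepsilon_{\gamma(s)}-\varepsilon_s)/W(\varepsilon_{\gamma(s)})$ by the two-sided exit identity \eqref{eq:exit-two-side}. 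Running the Laplace-transform/excursion-theory/duality scheme again, now with the straddling excursion of $X$ required to have already attained its eventual maximum by time $t$ and with this scale-function factor (which \eqref{eq:exit-two-side} turns into a functional of $k_t\circ X$) carried along, and invoking the pathwise reversal identity of the previous paragraph, gives the corresponding equality of Laplace transforms in $s$; the right-continuity in Lemma~\ref{lemma:continuity-on-s}(ii) then yields \eqref{eq:id-nV} for all $s$.

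I expect the main obstacle to be precisely the pathwise reversal identity on the event $\A^c$ --- equivalently, proving in full generality (and not merely on $\A$) that the observed pre-supremum fragment of the excursion straddling a fixed time has a space-time-reversal-invariant law. This is really the heart of the matter; everything else is bookkeeping with It\^o's excursion theory, the duality \eqref{eq:duality}, and the continuity lemma.
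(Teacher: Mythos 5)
Your scaffolding (Laplace transform in $s$, the compensation/master formula for the excursions of $X-I$, duality \eqref{eq:duality} at a fixed horizon, and Lemma~\ref{lemma:continuity-on-s} to invert the transform) is exactly the paper's, and your treatment of the event $\A$ — that $\A$ is $\rho$-invariant and that on $\A$ the observed pre-supremum fragment of the straddling excursion is carried onto its own $\rho$-image — is the paper's key pathwise observation. But your reduction is to the statement that the law of the fragment $\mathcal G(k_t\circ X)$ is $\rho$-invariant \emph{unconditionally}, and you only prove it on $\A$, explicitly writing ``granting the $\A^c$-part.'' That is a genuine gap, and it sits at the crux: the unconditional fixed-$t$ invariance is (after the master formula and Laplace inversion) essentially equivalent to identity \eqref{eq:id-n1} itself, so leaving the $\A^c$ contribution unproven makes the argument circular rather than merely incomplete; and there is no evident direct pathwise argument on $\A^c$, where reversal does not map the fragment onto a fragment of the same kind.

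The paper's proof shows how to avoid ever needing the $\A^c$-part: it keeps the indicator of $\A$ throughout the duality and excursion computation, and the price — the weight $P(Y_q<\varepsilon_{\gamma(s)})$ (with $Y_q=S_{G_q}-X_{G_q}$ independent of the excursion) appearing inside $\underline n$ — is cancelled by absorbing it into the test functional $f$, which is legitimate because it is a strictly positive measurable function of the fragment (through its height $\varepsilon_{\gamma(s)}$) alone. For (ii), the asymmetry created by the auxiliary factor $g$ (whose second argument is $\varepsilon_s$ on one side and $\varepsilon_{\gamma(s)}+I$ on the other) is resolved by disintegrating on $\varepsilon_{\gamma(s)}$, invoking the already-proved (i), and then choosing $g$ via the Markov property of $\underline n$ with $H(\varepsilon,x)=\1_{\{\sup\varepsilon<x\}}$, which converts $\{s<V,\ \gamma(s)=\gamma(V)\}$ into $\{\gamma(V)<s<V\}$ — close in spirit to your scale-function idea, but again requiring only the $\A$-restricted duality identity. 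So the missing idea in your proposal is precisely this cancellation device; without it, the ``complementary argument on $\A^c$'' you defer is the whole difficulty.
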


\begin{proof}
Let us define the following functional
\begin{align*}
F_1\left(k_t\circ X\right) = f\left(k_{\gamma(V)} \circ \theta^\prime_{\underline{\sigma}_t(X)}\circ k_t(X)  \right)\1_{\left\{\left(\overline{\sigma}_t-\underline{\sigma}_t\right)(X)>0\right\}} g\left(X_{\overline{\sigma}_t}-X_{\underline{\sigma}_t-},X_t-X_{\underline{\sigma}_t-}\right),
\end{align*}
where $f, g$ are also non-negative measurable functions.
It is not hard to see that a.s. $\A(k_t\circ X) = \{(\overline{\sigma}_t-\underline{\sigma}_t)(X)>0\} = \A(\rho (k_t\circ X))$
since $\underline\sigma_t(\rho (X\circ k_t)) = t-\overline\sigma_t(k_t\circ X)$ and $ \overline\sigma_t(\rho(k_t\circ X)) = t-\underline\sigma_t(k_t\circ X)$. Hence, the duality \eqref{eq:duality} applied to $F_1$ gives that
\begin{multline}
E \left[ f\left(X_{\underline{\sigma}_t+s} - X_{\underline{\sigma}_t-}, 0\leq s\leq \overline{\sigma}_t - \underline{\sigma}_t \right)\1_{\{\underline{\sigma}_t<\overline{\sigma}_t\}}
g\left(X_{\overline{\sigma}_t}-X_{\underline{\sigma}_t-},X_t-X_{\underline{\sigma}_t-}\right) \right]
\\
= E \left[ f\left(X_{\overline{\sigma}_t} - X_{(\overline{\sigma}_t-s)-}, 0\leq s\leq \overline{\sigma}_t  - \underline{\sigma}_t \right)\1_{\{\underline{\sigma}_t<\overline{\sigma}_t\}}
g\left(X_{\overline{\sigma}_t}-X_{\underline{\sigma}_t-},X_{\overline{\sigma}_t}\right)
\right].
\nonumber
\end{multline} 
Notice that $\underline\sigma_t$ is the left-end point of $e_t$, the excursion straddling $t$, this point is denoted $g_t$. Similarly, $\overline\sigma_t$ is the point where this excursion attains its maximum before $t$. This implies that $P$-a.s.  on $\A(k_t\circ X)$, we have $k_{\gamma(t-g_t)}\circ e_t  = \left(X_{\underline{\sigma}_t+s} - X_{\underline{\sigma}_t-}, 0\leq s\leq \overline{\sigma}_t - \underline{\sigma}_t \right)$ and $\rho\left( k_{\gamma(t-g_t)}\circ e_t\right) = \left(X_{\overline{\sigma}_t} - X_{(\overline{\sigma}_t-s)-}, 0\leq s\leq \overline{\sigma}_t  - \underline{\sigma}_t \right)$, which allows us to write the preceding identity as follows
\begin{subequations}
\label{eq:reverse-i}
\begin{align}
\label{eq:reverse-i-lhs}
&E \left[ f\left(k_{\gamma(t-g_t)}\circ e_t \right) \1_{\A(k_t\circ X)} g\left(e_t(\gamma(t-g_t)),e_t(t-g_t)\right)
\right] 
\\
\label{eq:reverse-i-rhs}
& \qquad = E \left[ f\left(\rho \left(k_{\gamma(t-g_t)}\circ e_t\right)\right)\1_{\A(\rho\circ (k_t\circ X))} g\left(e_t(\gamma(t-g_t)),e_t(\gamma(t-g_t)) + I_t\right)
\right]. 
\end{align}
\end{subequations}

We will first develop the left-hand side of this equation. First, rewrite $\A(k_t\circ X)$ as $\{S_{ g_t}-X_{g_t} < \max_{(0,t-g_t)}(e_t)\}$ and instead of stopping the process at $t$, we kill it at an exponential independent rate, or equivalently, we integrate \eqref{eq:reverse-i-lhs} against $q\e^{-q t}$, giving
\begin{align*}
\int\limits_0^{+\infty} q \e^{-qt} \der t \ E\left[ f\left( k_{\gamma(t-g_t)}\circ e_t \right) \1_{\{S_{g_t}-X_{g_t} < e_{t}\left(\gamma(t-g_t)\right)\}} g \left(e_{t}\left(\gamma(t-g_t)\right), e_{t}\left(t-g_t\right)\right)\right].
\end{align*}

As in the introduction, we let $(\tau_u)_{u\geq 0}$ denote the inverse of the local time at $0$ of the process $X-I$, and by $\epsilon_u$ the excursion starting at $\tau_{u-}$. If we exchange the expectation and the integral in the preceding equation (Fubini's theorem), we can express the quantity inside the expectation as a sum taken over all the excursion intervals of $X-I$ away from 0
\begin{align*}
&\int\limits_0^{+\infty} q \e^{-qt} \der t 
E\Biggl[ \sum\limits_{u:\Delta \tau_u>0} \1_{\{\tau_{u-}<t\leq \tau_u\}} f\left( k_{\gamma(t-\tau_{u-})} \circ \epsilon_u\right) 
\1_{\{S_{\tau_{u-}} - X_{\tau_{u-}} < \epsilon_u \left(\gamma(t-\tau_{u-})\right)\}} \biggr.
\\
& \qquad \qquad \qquad \qquad \qquad \qquad \qquad \qquad \qquad \quad \times
\Biggl. g \left(\epsilon_u \left(\gamma(t-\tau_{u-})\right),\epsilon_u\left(t-\tau_{u-} \right) \right) 
\Biggr]
\\
&=E\Biggl[  \sum\limits_{u:\Delta \tau_u>0}  \int\limits_0^{\infty} q \e^{-qt} \der t \1_{\{\tau_{u-}<t\leq \tau_u\}} f\left( k_{\gamma(t-\tau_{u-})} \circ \epsilon_u \right) 
\1_{\{S_{\tau_{u-}} - X_{\tau_{u-}} < \epsilon_u \left(\gamma(t-\tau_{u-})\right)\}} \biggr.
\\
& \qquad \qquad \qquad \qquad \qquad \qquad \qquad \qquad \qquad \quad \times
\Biggl. g \left(\epsilon_u \left(\gamma(t-\tau_{u-})\right),\epsilon_u\left(t-\tau_{u-} \right) \right) 
\Biggr]. 
\end{align*}
We have applied Fubini's theorem once more for the last step.
By the change of variable $s=t-\tau_{u-}$, taking again the expectation and applying the compensation formula \cite[Chapter IV]{Ber96} we get
\begin{subequations}
\label{eq:E-phi}
\begin{align}
\label{eq:E-phi-lhs}
&E\left[ \sum\limits_{u:\Delta \tau_u>0}  \int\limits_0^{\Delta \tau_u} q \e^{-q\tau_{u-}}\e^{-qs} \der s \ f\left( k_{\gamma(s)} \circ \epsilon_u\right) 
\1_{\{S_{\tau_{u-}} - X_{\tau_{u-}} < \epsilon_u \left(\gamma(s)\right)\}} 
g \left(\epsilon_u \left(\gamma(s)\right),\epsilon_u\left(s\right) \right) 
\right]
\\
& \ = E\left[ \int\limits_0^{+\infty}\der u\ \e^{-q\tau_u} \int \underline n(\der \varepsilon) \int\limits_0^{V(\varepsilon)} \der s \ q \e^{-qs}  f\left( k_{\gamma(s)} \circ \varepsilon \right) 
\1_{\{S_{\tau_{u}} - X_{\tau_{u}} < \varepsilon_{\gamma(s)}\}} 
g \left(\varepsilon_{\gamma(s)},\varepsilon_s \right) 
\right] \nonumber
\\
\label{eq:E-phi-rhs}
&\qquad = E\left[ \int\limits_0^{+\infty}\der u\ \e^{-q\tau_u} \varphi\left( S_{\tau_u} - X_{\tau_u} \right)\right],
\end{align}
\end{subequations}
where $\varphi(x)= \underline{n}\left( \int_0^{V(\varepsilon)} \der s \ q\e^{-qs} f(k_{\gamma(s)} \circ \varepsilon)\1_{\{x<\varepsilon_{\gamma(s)}\}} g \left(\varepsilon_{\gamma(s)},\varepsilon_s \right) \right)$.

Define $G_q\coloneqq \arginf_{(0,\mathbbm{e}_q)} X$, where $\mathbbm{e}_q$ is exponentially distributed with parameter $q$ and is independent of $X$. Then we can apply again the compensation formula to expand the last expression as follows
\begin{align*}
E\left[ \varphi\left(S_{G_q} - X_{G_q}\right) \right] &= E\left[ \sum\limits_{u:\Delta \tau_u>0} \1_{\{\tau_{u-}<\mathbbm{e}_q<\tau_u\}} \varphi\left(S_{\tau_{u-}} - X_{\tau_{u-}}\right) \right] 
\\
& \quad =E\left[ \sum\limits_{u:\Delta \tau_u>0} \left(\e^{-q\tau_{u-}}-\e^{-q\tau_{u}}\right) \varphi\left(S_{\tau_{u-}} - X_{\tau_{u-}}\right)  \right]  
\\
& \qquad =E\left[ \int\limits_0^{+\infty} \der u \ \e^{-q\tau_u} \varphi\left(S_{\tau_{u}} - X_{\tau_{u}}\right) \right] \underline{n}\left(1 - \e^{-qV}\right). 
\end{align*}
So we get that \eqref{eq:E-phi-lhs} is equal to
\begin{align}
\dfrac{E\left[ \varphi\left(S_{G_q} - X_{G_q}\right)\right]}{\underline{n}\left(1 - \e^{-qV}\right)}, \label{eq:E-phi-1}
\end{align}
and by applying Fubini's theorem one more time, and replacing $\varphi$ by its expression this is equal again to
\begin{align*}
& \dfrac{q}{\underline{n}\left(1 - \e^{-qV}\right)} E\left[ \underline n\left( \int\limits_0^V \der{s} \ \e^{-qs} f\left( k_{\gamma(s)}\circ \varepsilon \right)  \1_{\{Y_q<\varepsilon_{\gamma(s)}\}} g \left(\varepsilon_{\gamma(s)},\varepsilon_s \right) \right)\right]  
\\
&\quad = \dfrac{q}{\underline{n}\left(1 - \e^{-qV}\right)} \underline n\left( \int\limits_0^V \der s \ \e^{-qs} f\left( k_{\gamma(s)}\circ \varepsilon \right) P\left(Y_q<\varepsilon_{\gamma(s)}\right) g \left(\varepsilon_{\gamma(s)},\varepsilon_s \right) \right)
\\
&\qquad = \dfrac{q}{\underline{n}\left(1 - \e^{-qV}\right)} \int\limits_0^{\infty} \der s \ \e^{-qs} \underline n\left(  f\left( k_{\gamma(s)}\circ \varepsilon\right) \1_{\{s<V\}} P\left(Y_q<\varepsilon_{\gamma(s)}\right) g \left(\varepsilon_{\gamma(s)},\varepsilon_s \right) \right). 
\end{align*}
where $Y_q$ stands for an independent random variable, distributed as $S_{G_q} - X_{G_q}$.
\medskip

We now go back to Equation~\eqref{eq:reverse-i} and apply all the above arguments to \eqref{eq:reverse-i-rhs}. By choosing the same function $f$ and observing that $\varepsilon_{\gamma(s)} = \max_{[0,s]} \varepsilon = \max_{[0,s]} \rho \left(k_{\gamma(s)}\circ \varepsilon\right)$, we obtain that this is equal to the following expressions, analogous to \eqref{eq:E-phi} and \eqref{eq:E-phi-1},
\begin{align}
E\left[ \int\limits_0^{+\infty}\der u\ \e^{-q\tau_u} \widetilde\varphi\left( S_{\tau_u} - X_{\tau_u}, I_{\tau_u} \right)\right]
= \dfrac{E\left[ \widetilde\varphi\left(S_{G_q} - X_{G_q},I_{G_q}\right)\right]}{\underline{n}\left(1 - \e^{-qV}\right)}, \nonumber
\end{align}
where $\widetilde\varphi(x,y)= \underline{n}\left( \int_0^{V(\varepsilon)} \der s q\e^{-qs} f\circ \rho(k_{\gamma(s)}\circ \varepsilon)\1_{\{x<\varepsilon_{\gamma(s)}\}} g \left(\varepsilon_{\gamma(s)},\varepsilon_{\gamma(s)}+y\right) \right)$. 
Using the same arguments as before, and denoting by $(Y_q,Z_q)$ a pair distributed as $(S_{G_q}-X_{G_q}, I_{G_q})$, this is also equal to
\begin{align*}
&\dfrac{q}{\underline{n}\left(1 - \e^{-qV}\right)} E\left[ \underline n\left( \int\limits_0^V \der{s} \ \e^{-qs} f\circ\rho\left( k_{\gamma(s)}\circ \varepsilon\right)  \1_{\{Y_q<\varepsilon_{\gamma(s)}\}} g \left(\varepsilon_{\gamma(s)},\varepsilon_{\gamma(s)} +Z_q\right) \right)\right]  
\\
&= \dfrac{q}{\underline{n}\left(1 - \e^{-qV}\right)}  \int\limits_0^\infty \der{s} \ \e^{-qs} \underline n\left(  f\circ\rho\left( k_{\gamma(s)}\circ \varepsilon\right) \1_{\{s<V\}} E\left[ \1_{\{Y_q<\varepsilon_{\gamma(s)}\}} g \left(\varepsilon_{\gamma(s)},\varepsilon_{\gamma(s)} +Z_q\right) \right] \right),
\end{align*}
where it should be noted that the expectation is taken with respect to the law of $(Y_q,Z_q)$.
Finally, since the first term in the above product is the same for \eqref{eq:reverse-i-lhs} and \eqref{eq:reverse-i-rhs}, Equation~\eqref{eq:reverse-i} is equivalent to
\begin{align}
&\int\limits_0^{\infty} \der s \ \e^{-qs} \underline n\left(  f\left( k_{\gamma(s)}\circ \varepsilon\right) \1_{\{s<V\}} P\left(Y_q<\varepsilon_{\gamma(s)}\right) g \left(\varepsilon_{\gamma(s)},\varepsilon_s \right) \right) \nonumber 
\\
& = \int\limits_0^\infty \der{s} \ \e^{-qs} \underline n\left(  f\circ\rho\left( k_{\gamma(s)}\circ \varepsilon\right) \1_{\{s<V\}} E\left[ \1_{\{Y_q<\varepsilon_{\gamma(s)}\}} g \left(\varepsilon_{\gamma(s)},\varepsilon_{\gamma(s)} +Z_q\right) \right] \right).
\label{eq:reverse-ii}
\end{align}

\begin{enumerate}[label=(\roman*)]

\item In order to prove the first identity in the lemma we start by taking $g\equiv 1$. The probability  $P\left(Y_q<\varepsilon_{\gamma(s)}\right) $ is also a function of $ k_{\gamma(s)}\circ \varepsilon$, that is in addition strictly positive since $\varepsilon_{\gamma(s)}=0$ only if $\varepsilon=0$.
Therefore we can consider 
\begin{equation*}
f \left( k_{\gamma(s)}\circ \varepsilon\right) = \dfrac{h\left(k_{\gamma(s)}\circ \varepsilon\right) \e^{\alpha\gamma(s)}}{P\left(Y_q<\varepsilon_{\gamma(s)}\right)}, \ \ \forall s> 0,
\end{equation*}
where $h$ is a non-negative bounded function and $\alpha$ a non-negative constant.
Then, \eqref{eq:reverse-ii} entails
\begin{align*}
\int\limits_0^\infty \der s \ \e^{-qs} \underline n\left(   h\left( k_{\gamma(s)}\circ \varepsilon\right) \e^{\alpha\gamma(s)} \1_{\{s<V\}} \right) = \int\limits_0^\infty \der s \ \e^{-qs} \underline n\left(   h\circ\rho\left( k_{\gamma(s)}\circ \varepsilon\right) \e^{\alpha\gamma(s)} \1_{\{s<V\}}  \right).
\end{align*}
Under regularity conditions guaranteeing the existence and injectivity of the Laplace transform, this identity implies \eqref{eq:id-n1}.
This is true in particular if both sides in \eqref{eq:id-n1} are right-continuous functions of $s$, for every $s\in(0,+\infty)$, grow at most exponentially and are locally integrable on $[0,+\infty)$ \cite{Zay96}. 
Lemma~\ref{lemma:continuity-on-s} $(i)$ ensures the right-continuity on $(0,+\infty)$, so we will now focus on showing that the r.h.s. in \eqref{eq:id-n1} satisfies the other two conditions. Notice all the arguments we use below also apply when changing $h$ by $h\circ \rho$. 

Let us first show local integrability. For $q>\alpha$, $s>0$ and any constant $K>0$, we have
\begin{align*}
\int_{0}^K \der s \ \underline n\left(   h\left( k_{\gamma(s)}\circ \varepsilon\right) \e^{\alpha\gamma(s)} \1_{\{s<V\}} \right)  
& \leq \Vert h\Vert_\infty \e^{\alpha K} \int_{0}^K \der s \ \underline{n} \left( s<V\right) 
\\
& = \Vert h\Vert_\infty \e^{\alpha K}  \underline{n} \left( V\wedge K\right)
\\
& \leq C_{K,h,\alpha}  \underline{n} \left( V\wedge 1\right),
\end{align*}
where as usual, $\Vert \cdot\Vert_\infty$ stands for the uniform norm, $C_{K,h,\alpha}$ is a positive constant and $\underline{n} \left( V\wedge 1\right)$ is always finite.

The exponential growth condition is also straightforward
\begin{align*}
\underline n\left(   h\left( k_{\gamma(s)}\circ \varepsilon\right) \e^{\alpha\gamma(s)} \1_{\{s<V\}} \right) 
& \leq \Vert h\Vert_\infty \int  \underline{n} \left( \der \varepsilon\right) \e^{\alpha s} \1_{\{s<V\}}
\\
& \leq \Vert h\Vert_\infty  \underline{n} \left( V>s\right) \e^{\alpha s}
\leq C^{\prime\prime} \e^{\alpha s},
\end{align*}
for every $s>1$, since as we have mentioned before, $\underline{n}(V>1)$ is finite.

Thus, we can conclude that both functions in \eqref{eq:id-n1} have well-defined Laplace transforms that satisfy the injectivity conditions, so this identity holds for every $s\geq 0$ (is trivial for $s=0$).

\item In order to prove the second identity we will follow a similar path. Go back to Equation~\eqref{eq:reverse-ii}, on its r.h.s., we disintegrate $\underline n$ with respect to $\varepsilon_{\gamma(s)}$, getting
\begin{align}
&\int\limits_0^\infty \der{s} \ \e^{-qs} \int\limits_{x\in (0,+\infty)}
\underline n\left( f\circ\rho\left( k_{\gamma(s)}\circ \varepsilon\right) \1_{\{\varepsilon_{\gamma(s)}\in \der x,s<V\}} \right) E\left[ \1_{\{x>Y_q\}} g \left(x,x +Z_q\right) \right] \nonumber
\\
&= \int\limits_0^\infty \der{s} \ \e^{-qs} \int\limits_{x\in (0,+\infty)}
\underline n\left( f\left( k_{\gamma(s)}\circ \varepsilon\right) \1_{\{\varepsilon_{\gamma(s)}\in \der x,s<V\}} \right) E\left[ \1_{\{x>Y_q\}} g \left(x,x +Z_q\right) \right] \nonumber
\\
& = \int\limits_0^\infty \der{s} \ \e^{-qs} \underline n\left(  f\left( k_{\gamma(s)}\circ \varepsilon\right) \1_{\{s<V\}} E\left[ \1_{\{\varepsilon_{\gamma(s)}>Y_q\}} g \left(\varepsilon_{\gamma(s)},\varepsilon_{\gamma(s)} +Z_q\right) \right] \right) \nonumber
\\
& =\int\limits_0^{\infty} \der s \ \e^{-qs} \underline n\left(  f\circ\rho\left( k_{\gamma(s)}\circ \varepsilon\right) \1_{\{s<V\}} P\left(\varepsilon_{\gamma(s)}>Y_q \right) g \left(\varepsilon_{\gamma(s)},\varepsilon_s \right) \right). \nonumber
\end{align}
We have applied the identity \eqref{eq:id-n1} for the first step, which implies in particular that for every $f\in\mathcal{H}$ and $s> 0$,
\begin{align*}
\underline n\left(  f\circ\rho\left( k_{\gamma(s)}\circ \varepsilon\right) \1_{\{\varepsilon_{\gamma(s)}\in \der x,s<V\}} \right) 
= \underline n\left( f\left( k_{\gamma(s)}\circ \varepsilon\right) \1_{\{\varepsilon_{\gamma(s)}\in \der x,s<V\}}\right),
\end{align*}
since $\varepsilon_{\gamma(s)}$ is invariant under time reversal of the excursion $\varepsilon\circ k_{\gamma(s)}$ and $f\circ\rho\circ\rho \equiv f$. 
The latter argument also justifies the last equality, when applied directly to \eqref{eq:reverse-ii}.

Moreover, since this is also equal to the l.h.s. in \eqref{eq:reverse-ii} we have
\begin{align}
& \int\limits_0^{\infty} \der s \ \e^{-qs} \underline n\left(  f\left( k_{\gamma(s)}\circ \varepsilon  \right) \1_{\{s<V\}} P\left(\varepsilon_{\gamma(s)}>Y_q \right) g \left(\varepsilon_{\gamma(s)},\varepsilon_s \right) \right) \nonumber
\\
& =\int\limits_0^{\infty} \der s \ \e^{-qs} \underline n\left(  f\circ\rho\left( k_{\gamma(s)}\circ \varepsilon \right) \1_{\{s<V\}} P\left(\varepsilon_{\gamma(s)}>Y_q \right) g \left(\varepsilon_{\gamma(s)},\varepsilon_s \right) \right).
\label{eq:reverse-iii}
\end{align}

We choose the function $g$ as follows, for $0<z<x$,
\begin{align*}
g (x,z) = \dfrac{E_z\left[H\left(k_{T_0}\circ X,x\right)\right]}{P\left(Y_q<x\right)},
\end{align*}
where $H:\mathcal{E} \times (0,+\infty) \rightarrow \R_+$ is a measurable function. 

Then, the Markov property of $\underline{n}$ yields, for every $s\geq 0$,
\begin{align*}
& \underline n\left(  f\left( k_{\gamma(s)}\circ \varepsilon\right) \1_{\{s<V\}} E_{\varepsilon_s}\left[ H\left(k_{T_0}\circ X,\varepsilon_{\gamma(s)}\right) \right] \right) 
\\
& \qquad =  \underline n\left(  f\left( k_{\gamma(s)}\circ \varepsilon\right) \1_{\{s<V\}} H\left(\theta_s\circ\varepsilon,\varepsilon_{\gamma(s)}\right) \right).
\end{align*}
In particular, for $H (\varepsilon, x) = \1_{\{\sup \varepsilon<x\}}$ this is equal to
\begin{align*}
&\underline n\left(  f\left( k_{\gamma(s)}\circ \varepsilon\right) \1_{\{s<V\}} \1_{\{\sup \theta_s\circ \varepsilon<\varepsilon_{\gamma(s)}\}} \right).
\end{align*}
Consequently, for this choice of $g$ and $H$, Equation~\eqref{eq:reverse-iii} becomes,
\begin{align}
& \int\limits_0^{\infty} \der s \ \e^{-qs} \underline n\left(  f\left( k_{\gamma(V)}\circ\varepsilon \right) \1_{\{\gamma(V)<s<V\}}  \right) \nonumber
\\
& \quad =\int\limits_0^{\infty} \der s \ \e^{-qs} \underline n\left(  f\circ\rho\left( k_{\gamma(V)}\circ\varepsilon\right) \1_{\{\gamma(V)<s<V\}}   \right). \label{eq:reverse-iv}
\end{align}

Here again we have the identity between the Laplace transform of two functions of $s$. 
When $f(\varepsilon)=h(\varepsilon)\e^{\alpha V}$ for some $h$ bounded and $\alpha>0$, both integrands are locally integrable on $[0,+\infty)$ by the same arguments used previously for $(i)$.
Right-continuity also holds thanks to Lemma~\ref{lemma:continuity-on-s} $(ii)$, so we can invert the Laplace transform in \eqref{eq:reverse-iv}, leading to the desired identity for $s>0$.  
\end{enumerate}
\end{proof}

\begin{proof}[Proof of Proposition~\ref{prop:pre-sup}]

Let us go back to Equation~\eqref{eq:id-nV}. We can replace $f(\cdot)$ by $h(\cdot) \e^{q\gamma(V(\cdot))}$, for $h$ a bounded mesurable function. Then we integrate both sides with respect to $q\e^{-qs}\der s$ and apply Fubini's theorem, which leads to
\begin{align*}
& \underline n\left( h\left( k_{\gamma(V)}\circ\varepsilon\right) \left( 1 - \e^{-q (V-\gamma(V))} \right) \right)
=\underline n\left( h\circ \rho\left( k_{\gamma(V)}\circ\varepsilon\right) \left( 1 - \e^{-q (V-\gamma(V))} \right) \right),
\end{align*}
or equivalently,
\begin{align*}
& \underline n\left( h\left( k_{\gamma(V)}\circ\varepsilon\right) \left( 1 - \e^{-q (V-\gamma(V))} \right) \1_{\{V-\gamma(V)>0\}} \right) 
\\
& \qquad =\underline n\left( h\circ \rho\left( k_{\gamma(V)}\circ\varepsilon\right) \left( 1 - \e^{-q (V-\gamma(V))} \right) \1_{\{V-\gamma(V)>0\}} \right).
\end{align*}
Since $h$ is non-negative, monotone convergence applies when $q\rightarrow +\infty$, leading to
\begin{align*}
& \underline n\left( h\left( k_{\gamma(V)}\circ\varepsilon\right) \1_{\{V-\gamma(V)>0\}} \right) 
 =\underline n\left( h\circ \rho\left(k_{\gamma(V)}\circ\varepsilon\right) \1_{\{V-\gamma(V)>0\}} \right),
\end{align*}
Finally, notice that $\underline n(\gamma(V)\ge V)=0$, so the identity in the proposition holds for any $h$ bounded. This is still true for any non-negative function, again by a monotone convergence argument.
\end{proof}

\subsection*{Post-supremum process}

We now give a result  analogous to Proposition~\ref{prop:pre-sup} for the post-supremum process of the excursions of $X-I$ away from 0.

\begin{prop}\label{prop:post-sup}
The post-supremum process of the excursion of $X-I$ away from zero is invariant under time reversal, that is, for any measurable functional $h:\mathcal{E}\rightarrow \R_+$,
\begin{align}\label{eq:post-sup}
\underline{n}\left( h\left( \theta^\prime_{\gamma(V)} \circ \varepsilon\right)\right) = \underline{n}\left( h\circ \rho \left( \theta^\prime_{\gamma(V)} \circ \varepsilon\right)\right).
\end{align}
\end{prop}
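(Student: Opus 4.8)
The plan is to mirror the derivation of Proposition~\ref{prop:pre-sup}: turn \eqref{eq:duality} together with the compensation formula into a duality identity, Laplace-invert it in an auxiliary killing rate using a continuity input as in Lemma~\ref{lemma:continuity-on-s}, and then pass to a limit — only now the object tracked is the \emph{post}-supremum excerpt of the straddling excursion. Concretely, fix $t>0$, work on $\A(k_t\circ X)=\{\underline\sigma_t<\overline\sigma_t\}$, and consider, in place of the functional $F_1$ of Lemma~\ref{lemma:reverse-pre-sup}, one built from $\theta^\prime_{\gamma(V)}\circ\theta^\prime_{\underline\sigma_t}\circ k_t(X)=\big(X_{\overline\sigma_t+u}-X_{\overline\sigma_t},\,0\le u\le t-\overline\sigma_t\big)$, the portion of $e_t$ to the right of its running maximum before $t$, weighted by a non-negative $g$ of the boundary data $\big(X_{\overline\sigma_t}-X_{\underline\sigma_t-},\,X_t-X_{\underline\sigma_t-}\big)$. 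Applying \eqref{eq:duality} with $\rho\circ k_t$, and using that reversal exchanges $\underline\sigma_t\leftrightarrow t-\overline\sigma_t$ and $\overline\sigma_t\leftrightarrow t-\underline\sigma_t$, one gets the analogue of \eqref{eq:reverse-i}; note that $\rho$ applied to this excerpt equals $\big(X_t-X_{(t-u)-},\,0\le u\le t-\overline\sigma_t\big)$, which is the initial segment of the reversed trajectory up to its argmin, i.e. its pre-minimum part.

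From here I would run the same reductions as in Lemma~\ref{lemma:reverse-pre-sup} and Proposition~\ref{prop:pre-sup}: kill $X$ at an independent exponential time of rate $q$, apply the compensation formula twice (along $\tau$ and then at $G_q$) as in \eqref{eq:E-phi}; choose $g(x,z)=E_z[H(k_{T_0}\circ X,x)]/P(Y_q<x)$ and use the Markov property of $\underline n$ with $H(\omega,x)=\1_{\{\sup\omega<x\}}$ to convert the running maximum so far into the global maximum of the excursion, at the cost of the indicator $\{\gamma(V)<s<V\}$ — on this event the excerpt becomes $k_{s-\gamma(V)}\circ\theta^\prime_{\gamma(V)}\circ\varepsilon$; then invert the Laplace transform in $q$, legitimate by the analogue of Lemma~\ref{lemma:continuity-on-s} and the local-integrability and exponential-growth bounds used there, to obtain, for every $s>0$ and every $h$ of exponential type,
\begin{align*}
\underline n\!\left(h\!\left(k_{s-\gamma(V)}\circ\theta^\prime_{\gamma(V)}\circ\varepsilon\right)\1_{\{\gamma(V)<s<V\}}\right)=\underline n\!\left((h\circ\rho)\!\left(k_{s-\gamma(V)}\circ\theta^\prime_{\gamma(V)}\circ\varepsilon\right)\1_{\{\gamma(V)<s<V\}}\right).
\end{align*}
It remains to remove the truncation at $s-\gamma(V)$ and conclude \eqref{eq:post-sup}, by letting $s\uparrow V$ along a suitable integral average in $q$ and then extending from bounded to arbitrary non-negative $h$ by monotone convergence, exactly in the spirit of the final step of the proof of Proposition~\ref{prop:pre-sup}.

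The main obstacle is twofold. First, unlike for the pre-supremum (where $\rho$ of the excerpt is again an excursion excerpt of the reversed path, so the duality identity is already in the right form and both sides decompose symmetrically over the excursion point process), here $\rho$ of the post-supremum excerpt is the pre-minimum segment of the reversed trajectory; matching the two sides forces a more delicate choice of the weight $g$ and a more careful bookkeeping of the boundary data $(X_{\overline\sigma_t}-X_{\underline\sigma_t-},X_t-X_{\underline\sigma_t-})$ through the two uses of the compensation formula. Second, the post-supremum excerpt is not seen in full before time $t$ (the straddling excursion extends past $t$), so a genuine extra argument is needed to pass from the truncated $k_{s-\gamma(V)}\circ\theta^\prime_{\gamma(V)}\circ\varepsilon$ to the full $\theta^\prime_{\gamma(V)}\circ\varepsilon$ — either a limiting procedure refining the Laplace inversion, or reversing at the right endpoint of the excursion (via the first-passage form of the time-reversal duality for Lévy processes) rather than at the fixed time $t$. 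A more indirect alternative would be to disintegrate Proposition~\ref{prop:pre-sup} over the $\rho$-invariant functional $\varepsilon_{\gamma(V)}=\sup\varepsilon$ and combine it with the Williams-type decomposition of \cite{Cha96,Duq03} and the time-reversal duality \eqref{eq:duality}--\eqref{eq:exit-two-side} relating $X$ conditioned to stay positive up to its first passage to $a$ with $X$ started at $a$ up to $T_0$; but since one purpose of the present work is precisely to obtain such $P^\uparrow$-reversal statements as corollaries, carrying out the self-contained computation above is the natural path.
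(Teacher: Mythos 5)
There is a genuine gap, and it sits at the very first step of your plan. The reason the pre-supremum computation of Lemma~\ref{lemma:reverse-pre-sup} works is structural: on $\A(k_t\circ X)$ the excerpt between $\underline\sigma_t$ and $\overline\sigma_t$ is mapped, by space-time reversal of the whole path at the \emph{fixed} time $t$, onto the excerpt between the argmin and the argmax of the reversed path, i.e.\ onto an object of exactly the same type for a process with the same law; this is what lets both sides of \eqref{eq:duality} be disintegrated over the excursion point process of $X-I$ with a common factor that cancels. Your excerpt $\bigl(X_{\overline\sigma_t+u}-X_{\overline\sigma_t},\,0\le u\le t-\overline\sigma_t\bigr)$ does not have this property: under $\rho_t$ it is carried to the \emph{initial} segment of the reversed trajectory up to its argmin (equivalently, the post-supremum excerpt of $\rho(k_t\circ X)$ is the reversed pre-$\underline\sigma_t$ history of $X$, which is not an excerpt of the straddling excursion at all). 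So the duality identity you invoke does not produce ``$h$ of the excerpt'' on one side and ``$h\circ\rho$ of the same excerpt'' on the other; the two sides do not decompose symmetrically over the excursions, and no choice of the weight $g$ of the boundary data repairs this. Consequently the displayed truncated identity in your proposal is asserted, not derived. The second obstacle you flag is also unresolved and is not a routine limit: in the truncated identity the functional itself depends on $s$ through the killing at $s-\gamma(V)$, and $\rho$ of a truncated path is not a truncation of the reversed path, so the device used at the end of the proof of Proposition~\ref{prop:pre-sup} (integrate against $q\e^{-qs}\der s$, let $q\to\infty$, monotone convergence) does not apply; ``letting $s\uparrow V$ along a suitable integral average'' is not an argument.

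For comparison, the paper circumvents both problems by changing the decomposition: it works on the complementary event $\A^c(k_t\circ X)=\{\overline\sigma_t<\underline\sigma_t\}$ with the segment from the argmax to the argmin, $\theta^\prime_{\gamma(V)}\circ k_{\underline\sigma_t}\circ X$, which \emph{is} mapped by $\rho_t$ onto a segment of the same type and is entirely observed before $t$ (no truncation). Running the compensation-formula machinery there, with $g(x)=\e^{qx}$ and a normalization by $C_q$, and inverting the Laplace transform via the continuity statement of Lemma~\ref{lemma:continuity-z}, yields Proposition~\ref{prop:rotation-killed-x}: rotation invariance of $\theta^\prime_{\gamma(V)}\circ k_{T_{-x}}\circ X$ under $P$. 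Proposition~\ref{prop:post-sup} is then deduced by applying the Markov property of $\underline n$ at a small time $s$ on $\{s<\gamma(V)\}$ (so that the post-supremum of the excursion coincides with the post-supremum of a copy of $X$ started at $\varepsilon_s$ and killed at $T_0$, conditioned on the value of its maximum) and letting $s\downarrow 0$ by monotone convergence. Your closing alternatives (reversal at a first-passage time, or the Williams decomposition of Chaumont--Duquesne combined with known dualities for $P^\uparrow$) point in the right direction --- the first is essentially what Proposition~\ref{prop:rotation-killed-x} achieves, and the second is the route the paper deliberately avoids since the $P^\uparrow$-reversal statements are meant to be corollaries --- but as written they are one-sentence sketches, so the proof as proposed is incomplete.
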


Before we proceed to prove this result, we need to establish some lemmas, and the following proposition, which is interesting in its own right, since besides serving to prove our main results, it gives the invariance under time-reversal of parts of the trajectory of a killed SPLP.

\begin{prop}\label{prop:rotation-killed-x}
Let $x>0$ and $X$ be a SPLP starting at 0 and killed upon hitting $(-\infty,-x)$. This process shifted to the (unique) value where it attains its supremum before $T_{-x}$ is invariant by rotation.
More precisely, for every $x>0$, $P$-a.s.
\begin{align*}
\theta^\prime_{\gamma(V)} \circ k_{T_{-x}} \circ X  \overset{\text{d}}{=} \rho\left( \theta^\prime_{\gamma(V)} \circ k_{T_{-x}} \circ X \right).
\end{align*}
\end{prop}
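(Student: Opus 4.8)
The plan is to derive the identity from the L\'evy time-reversal duality \eqref{eq:duality}, by killing $X$ at an independent exponential time and then deconditioning via It\^o's excursion theory, in the same spirit as the proof of Lemma~\ref{lemma:reverse-pre-sup}; Proposition~\ref{prop:pre-sup} will enter as the essential ingredient at the end.

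Concretely, I would fix a bounded $h\in\mathcal H$ and, for $q>0$, apply \eqref{eq:duality} integrated against $q\e^{-qt}\der t$ (so that $X$ is stopped at $\mathbbm{e}_q$) to a functional
\[
F(k_t\circ X)= f\bigl(\theta^\prime_{\overline\sigma_t}\circ k_t(X)\bigr)\, g\bigl(X_{\overline\sigma_t},\,X_t,\,I_t\bigr),
\]
which isolates the post-supremum part of $X$ stopped at $t$, weighted by a measurable $g$ of the height of the supremum, of the endpoint and of the running infimum, the inner function $f$ being chosen afterwards to carry the exponential correction compensating the tilting produced by the deconditioning --- exactly the device used twice inside the proof of Lemma~\ref{lemma:reverse-pre-sup}. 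On the right-hand side of the resulting identity, $\theta^\prime_{\gamma(V)}\circ\rho\circ k_t(X)$ coincides $P$-a.s.\ with the space-time reversal of $X$ run up to $\underline\sigma_t$, i.e.\ of the pre-infimum part $k_{G_q}\circ X$ (recall $\overline\sigma_t(\rho(k_t\circ X))=t-\underline\sigma_t(k_t\circ X)$); and since $-I_{\mathbbm{e}_q}$ is exponentially distributed with parameter the unique positive root of $\psi(\cdot)=q$, and hence charges every level, conditioning on $\{-I_{\mathbbm{e}_q}=x\}$ turns this side, once the exponential weight has absorbed the $\e^{-q\cdot(\text{length})}$-tilting of the pre--first-passage trajectory, into a weighting in $x$ of the space-time reversal of $k_{T_{-x}}\circ X$.

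On the left-hand side I would follow the same sequence of manipulations as in \eqref{eq:reverse-i}--\eqref{eq:E-phi}: unfold the expectation as a sum over the excursion intervals of $X-I$, apply the compensation formula, and integrate in the exponential time, so that everything is expressed through $\underline n$. To recognize the post-supremum part of $k_{T_{-x}}\circ X$ one cuts the trajectory at the \emph{maximizing excursion} $\epsilon_{u^*}$ --- the $P$-a.s.\ unique excursion of $X-I$ during which $X$ attains its overall maximum over $[0,T_{-x}]$ --- and uses that, conditionally on $(u^*,\epsilon_{u^*})$, the excursions occurring before and after local time $u^*$ are independent (restriction property of the Poisson point process of excursions); this writes $\theta^\prime_{\gamma(V)}\circ k_{T_{-x}}(X)$ as the concatenation of $\theta^\prime_{\gamma(V)}\circ\epsilon_{u^*}$ and an independent (given the maximal height of $\epsilon_{u^*}$ and given $x-u^*$) copy of $X$ killed at $T_{-(x-u^*)}$ and conditioned to stay below that height, the latter conditional law being governed by \eqref{eq:exit-two-side}. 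Disintegrating $\underline n$ with respect to the maximal height of the excursion, which is itself invariant under $\rho$, and applying Proposition~\ref{prop:pre-sup} to the pre-supremum block --- which appears reversed after $\rho$ --- makes the two sides coincide. Finally, both sides are by construction Laplace transforms in $x$ of functions that, by the arguments used in the proof of Lemma~\ref{lemma:reverse-pre-sup} and legitimate because the functionals involved are of exponential type, i.e.\ in $\mathcal H$, are right-continuous, of at most exponential growth and locally integrable on $[0,\infty)$; uniqueness of the Laplace transform then yields the identity for every $x>0$, the a.s.\ uniqueness of $\gamma(V)$ being as usual a consequence of the regularity of $0$ for $(-\infty,0)$.

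The main obstacle is precisely the deconditioning step. Passing from the exponential time $\mathbbm{e}_q$ to the deterministic first-passage time $T_{-x}$ is not a mere change of variable, because the overall supremum of $X$ before $\mathbbm{e}_q$ may well be attained inside the excursion straddling $\mathbbm{e}_q$, and conditioning on $-I_{\mathbbm{e}_q}$ reweights the pre--first-passage part of the path by the exponential of its duration. Keeping simultaneous track of which excursion carries the supremum, of the conditional independence between the pre- and post-supremum blocks, and of the compensating exponential weights --- all while remaining inside $\mathcal H$ so that the concluding Laplace inversion is licit --- is where essentially all the work of the proof concentrates.
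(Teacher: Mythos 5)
There is a genuine gap, and it sits exactly where you say ``essentially all the work of the proof concentrates''. Your choice of functional, $f\bigl(\theta^\prime_{\overline\sigma_t}\circ k_t(X)\bigr)g(\cdot)$, isolates the post-supremum part of the \emph{whole} path stopped at $t$; under the duality \eqref{eq:duality} this block is carried onto the space-time reversal of the \emph{pre-infimum} part $k_{\underline\sigma_t}\circ X$ (you note this yourself). So the identity you obtain is a cross-identity between two different objects --- the post-supremum of $k_{\mathbbm{e}_q}\circ X$ and the reversal of $k_{T_{-u}}\circ X$ with $u=-I_{\mathbbm{e}_q}$ --- neither of which is the path $\theta^\prime_{\gamma(V)}\circ k_{T_{-x}}\circ X$ of the statement. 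The paper avoids this mismatch with two moves you do not make: it restricts to the event $\A^c(k_t\circ X)=\{\overline\sigma_t<\underline\sigma_t\}$, which is invariant under $\rho_t$ and on which the post-supremum/pre-infimum block is mapped by the reversal to \emph{itself} reversed; and it kills at $\underline\sigma_t$ rather than at $t$, so that the functional already acts on $\theta^\prime_{\gamma(V)}\circ k_{T_{-u}}\circ X$ exactly, while the excursion straddling $t$ contributes only the indicator $\{\sup\epsilon_u<S_{\tau_{u-}}-I_{\tau_{u-}}\}$ and a weight $g(X_t)=\e^{qX_t}$ that the compensation formula turns into a factor $C_q(S_{T_{-u}}-I_{T_{-u}})$, cancelled by the choice of $f$. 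No decomposition at a ``maximizing excursion'' and no appeal to Proposition~\ref{prop:pre-sup} is needed.

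The bridge you propose to close your cross-identity --- cutting $\theta^\prime_{\gamma(V)}\circ k_{T_{-x}}\circ X$ at the excursion of $X-I$ carrying the overall maximum and using the restriction property --- is not carried out, and as sketched it would not close. Reversing that concatenation swaps the two blocks, so matching laws would require the reversal invariance of the post-supremum of a \emph{single} excursion under $\underline n$; but that is precisely Proposition~\ref{prop:post-sup}, which the paper deduces \emph{from} the present proposition, so your plan is circular at that point (Proposition~\ref{prop:pre-sup} concerns the pre-supremum of one excursion and does not control the concatenation of the many complete excursions preceding the maximizing one). The peripheral ingredients of your sketch (exponential killing, compensation formula, Laplace inversion via right-continuity as in Lemma~\ref{lemma:continuity-z}) do match the paper, but the central self-duality mechanism is missing.
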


We also need the following lemma.
\begin{lemma}\label{lemma:continuity-z}
For every $x\in(0,+\infty)$ and every functional $h\in\mathcal{C}_b(\mathcal{E},\R_+)$, the function $z:[0,+\infty)\rightarrow	[0,+\infty)$ defined as 
\begin{align*}
z(x) \coloneqq E\left[ h\left( \theta^\prime_{\gamma(V)} \circ k_{T_{-x}} \circ X \right) \right]
\end{align*}
is right-continuous.
\end{lemma}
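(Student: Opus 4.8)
The plan is to establish right-continuity of $z(x) = E[h(\theta^\prime_{\gamma(V)}\circ k_{T_{-x}}\circ X)]$ by a dominated-convergence argument, reducing the question to the a.s. convergence of the stopped path $k_{T_{-x}}\circ X$ and of the associated argmax functional as $x$ decreases to a fixed $x_0 > 0$. First I would fix $x_0>0$ and a sequence $x_n \downarrow x_0$, and recall that since $X$ has no negative jumps, it is a.s. continuous at $T_{-x}$ for every $x>0$, so $T_{-x} = \inf\{t\ge 0:\ X_t \le -x\}$ and $X_{T_{-x}} = -x$ a.s. The map $x\mapsto T_{-x}$ is non-decreasing and a.s. left-continuous; more relevantly, it is a.s. \emph{right}-continuous at $x_0$ precisely because $0$ is regular for $(-\infty,0)$ (Rogozin), which forces $X$ to strictly cross the level $-x_0$, so $T_{-x_n}\downarrow T_{-x_0}$ a.s. as $x_n\downarrow x_0$. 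Consequently $k_{T_{-x_n}}\circ X \to k_{T_{-x_0}}\circ X$ in the Skorokhod topology a.s. (the stopping times converge and the path is continuous at the limiting time, using our convention that $k$ freezes the path at its value).

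Next I would transfer this to the shifted post-supremum path. On the excursion interval straddling $T_{-x_0}$ — equivalently, on the last excursion of $X-I$ before $T_{-x_0}$ — the supremum location $\gamma(V)$ of $k_{T_{-x_0}}\circ X$ (relative to $X_0=0$) is attained at a unique time a.s., by the classical result \cite{Mil77a,Duq03} on uniqueness of the argmax, which holds $P$-a.s. here. Using that $\overline{\sigma}_{T_{-x}}(X)$ is continuous in $x$ at $x_0$ on the event that the argmax is unique (a standard consequence of Skorokhod convergence together with uniqueness of the maximizer, as $T_{-x_n}\downarrow T_{-x_0}$ and the portion of path added between $T_{-x_0}$ and $T_{-x_n}$ stays strictly below the already-achieved supremum for $n$ large, since $X$ reaches $-x_0<0$), I get $\gamma(V)(k_{T_{-x_n}}\circ X)\to \gamma(V)(k_{T_{-x_0}}\circ X)$ a.s. Combining convergence of the stopped paths and of the shift amounts, the non-standard shift operator $\theta'_{\gamma(V)}$ is continuous at such paths, whence $\theta'_{\gamma(V)}\circ k_{T_{-x_n}}\circ X \to \theta'_{\gamma(V)}\circ k_{T_{-x_0}}\circ X$ a.s.\ in $\mathcal{E}$.

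Finally, since $h\in\mathcal{C}_b(\mathcal{E},\R_+)$ is bounded and continuous for the Skorokhod topology, $h(\theta'_{\gamma(V)}\circ k_{T_{-x_n}}\circ X)\to h(\theta'_{\gamma(V)}\circ k_{T_{-x_0}}\circ X)$ a.s., and by bounded convergence $z(x_n)\to z(x_0)$. As the sequence $x_n\downarrow x_0$ was arbitrary, $z$ is right-continuous on $[0,+\infty)$ (the value at $x=0$ being handled trivially, since $T_0 = 0$ gives a degenerate path).

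I expect the main obstacle to be the a.s.\ continuity of the argmax location $x\mapsto\gamma(V)(k_{T_{-x}}\circ X)$: one must rule out, with probability one, that the supremum of the killed path is \emph{also} attained arbitrarily close to $T_{-x_0}$ along the sequence, i.e.\ that a second, nearly-maximal peak appears in the short increment between $T_{-x_0}$ and $T_{-x_n}$. This is where uniqueness of the argmax \cite{Mil77a} and the fact that $X$ is \emph{strictly} below $-x_0 < 0 \le \overline{\sigma}$ right after $T_{-x_0}$ must be used carefully; everything else (Skorokhod convergence of stopped paths, continuity of $\theta'$, bounded convergence) is routine.
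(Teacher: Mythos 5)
Your proposal is correct and takes essentially the same route as the paper: a.s. convergence $T_{-x_n}\downarrow T_{-x}$ together with Skorokhod convergence of the stopped paths (the paper invokes Jacod--Shiryaev, Prop.~VI.2.11--2.12), stabilization of the supremum location because the piece of path added after $T_{-x}$ stays near $-x<0\le \sup_{[0,T_{-x}]}X$, and then continuity of $h$ plus bounded convergence. The paper reaches the same stabilization by a monotonicity-and-contradiction argument showing $\gamma(T_{-x_n},X)$ is eventually constant, which is the same mechanism you describe in your final paragraph.
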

\begin{proof}
See Section~\ref{sec:proofs}.
\end{proof}

\begin{proof}[Proof of Proposition~\ref{prop:rotation-killed-x}]
To demonstrate this result we follow a similar path to that of the proof of Lemma~\ref{lemma:reverse-pre-sup}.
We start by considering the complementary of the event $\A$ defined at the beginning of this section, that is 
\begin{align*}
\A^c (k_t\circ X) = \{\overline{\sigma}_t(X) < \underline{\sigma}_t(X) \}.
\end{align*}
Notice that we may have $\Delta X_{\underline{\sigma}_t}\neq 0$, in particular in the finite variation case in which the excursions of $X-I$ away from 0 starts by a jump \cite{Cha96,ChDo05}. In order to make the notation less heavy we develop the proof only for the infinite variation case. Just a few modifications are needed to treat the general case. 
Likewise, the bounded variation case is a straightforward consequence of Lemma 3.8 in \cite{DaLa15}, where, conditionally on $\varepsilon_\gamma=x$, the post-supremum process is shown to have the law $P_x(\cdot\vert T_0< T_x)\circ k_{T_0}^{-1}$, which is invariant under time-reversal.
\medskip

We define the functional $F_2$ as follows
\begin{align*}
F_2\left( k_t\circ X \right) = f \left(\theta^\prime_{\gamma(V)}\circ k_{\underline{\sigma}_t}\circ X \right) \1_{\A^c(k_t\circ X)} g\left(X_t\right),
\end{align*}
where $f$ and $g$ are non-negative measurable functions. 
In order to apply the duality property \eqref{eq:duality} to this function, let us look at $F_2\circ \rho( k_t\circ X )$ or equivalently $F_2(\rho\circ k_t\circ X)$.
Notice first that $\A^c$ and $X_t$ are invariant under time-reversal at $t$, and additionally, under $\A^c$, we also have $\overline{\sigma}_t = \overline{\sigma}_{\underline{\sigma}_t}$, so it holds that
\begin{align*}
 E\left[ f \left(\theta^\prime_{\gamma(V)}\circ k_{\underline{\sigma}_t}\circ X \right) \1_{\A^c(k_t\circ X)} g\left(X_t\right) \right] 
= E\left[  f\circ \rho \left(\theta^\prime_{\gamma(V)}\circ k_{\underline{\sigma}_t}\circ X \right) \1_{\A^c(k_t\circ X)} g\left(X_t\right)\right].
\end{align*}
Let us integrate this equality in $t$ against the Lebesgue measure,
\begin{subequations}
\label{eq:reverse-killed-x-i}
\begin{align}
\label{eq:reverse-killed-x-i-lhs}
& \int_0^{+\infty} \der t \ E\left[ f \left(\theta^\prime_{\gamma(V)}\circ k_{\underline{\sigma}_t}\circ X \right) \1_{\A^c(k_t\circ X)} g\left(X_t\right) \right] 
\\
\label{eq:reverse-killed-x-i-rhs}
& \qquad \qquad = \int_0^{+\infty} \der t \ E\left[  f\circ \rho \left(\theta^\prime_{\gamma(V)}\circ k_{\underline{\sigma}_t}\circ X \right) \1_{\A^c(k_t\circ X)} g\left(X_t\right)\right].
\end{align}
\end{subequations}
Using the same strategy as before, we can express some quantities in this equation in terms of the excursion straddling $t$ of the process reflected at its infimum.
We recall that $(\tau_u)_{u\geq 0}$ denotes the inverse of the local time at $0$ of the process $X-I$, and $\epsilon_u$ the excursion starting at $\tau_{u-}$. 
Recall also that $-I$ is the local time at 0 for this excursion process and its inverse is $\tau_u = T_{-u} = T_{(-\infty,-u)}$ a.s..
Thus, we can expand \eqref{eq:reverse-killed-x-i-lhs} as follows
\begin{align*}
&\int\limits_0^\infty \der t \ E\left[ f \left(\theta^\prime_{\gamma(V)}\circ k_{\underline{\sigma}_t}\circ X \right) \1_{\A^c(k_t\circ X)} g\left(X_t\right) \right] 
\\
& = \int\limits_0^\infty \der t \ E \left[ \sum\limits_{u:\Delta \tau_u>0} \1_{\{\tau_{u-}<t\leq \tau_u\}} f \left(\theta^\prime_{\gamma(V)}\circ k_{\tau_{u-}}\circ X \right) \1_{\{\sup_{(0,t-\tau_{u-})}\epsilon_u<S_{\tau_{u-}}-I_{\tau_{u-}}\}} g\left(X_t\right) \right].
\end{align*}
Thanks to Fubini's theorem and the change of variable $s=t-\tau_{u-}$, followed by the application of the compensation formula, we obtain that this is equal to
\begin{align*}
& = E \left[  \sum\limits_{u:\Delta \tau_u>0} \int\limits_0^{\Delta \tau_u} \der s \ f \left(\theta^\prime_{\gamma(V)}\circ k_{\tau_{u-}}\circ X \right) \1_{\{\sup_{(0,s)}\epsilon_u<S_{\tau_{u-}}-I_{\tau_{u-}}\}} g\left(X_{\tau_{u-}+s}\right) \right]
\\
& = E \left[  \int_0^\infty \der u  f \left(\theta^\prime_{\gamma(V)}\circ k_{\tau_u}\circ X \right) \int \underline{n} (\der \varepsilon) \int_0^V \der s \  \1_{\{\sup_{(0,s)}\varepsilon<S_{\tau_u}-I_{\tau_u}\}} g\left(X_{\tau_u}+ \varepsilon_s\right) \right]
\\
& = \int_0^\infty \der u\ \e^{-q u} E \left[   f \left(\theta^\prime_{\gamma(V)}\circ k_{T_{-u}}\circ X \right) C_q \left( S_{T_{-u}}-I_{T_{-u}} \right) \right],
\end{align*}
where for any $y\geq 0$ we set ${C_q(y) =\underline{n} \left( \int_0^V \der s\ \e^{q\varepsilon_s}  \1_{\{\varepsilon_{\gamma(s)}<y\}}\right)}$ and we have taken the function $g$ of the form $g(x) = \e^{qx}$, with $q>0$. Fubini's theorem was applied again in the last step.
Note that 
\begin{align*}
C_q(y) =\underline{n} \left( \int_0^V\der s\  \e^{q\varepsilon_s} \1_{\{\bar\varepsilon_{s}<y\}} \right) = \underline{n} \left( \int_0^V\der s\  \e^{q\varepsilon_s} \1_{\{s<T_y\}} \right), 
\end{align*}
so that for any $y>0$,
\begin{align*}
0<\underline{n} \left( T_y\wedge V\right)
\le
C_q(y) \le \e^{qy}  \underline{n} \left(T_y\wedge V\right)<\infty. 
\end{align*}

Now notice that $S_{T_{-u}}-I_{T_{-u}}$ is precisely the height of the excursion ${\theta^\prime_{\gamma(V)}\circ k_{T_{-u}}\circ X}$, which allows us to choose the function $f$ as follows,
\begin{align*}
f(\omega) = \dfrac{h(\omega)}{C_q \left( \sup \omega - \inf \omega\right) },
\end{align*}
for any $\omega\in \mathcal{E}$, where $h$ is a bounded, measurable and positive function. This is a valid choice for $f$, since for all $u>0$, writing $\omega_u =\theta^\prime_{\gamma(V)}\circ k_{T_{-u}}\circ X$, we have that $P(\sup \omega - \inf \omega\in(0,\infty))=1$.

Taking into account the fact that the height of the excursion $\theta^\prime_{\gamma(V)}\circ k_{T_{-u}}\circ X$ is invariant by the transformation $\rho$, Equation~\eqref{eq:reverse-killed-x-i} becomes
\begin{align*}
\int_0^\infty \der u \ \e^{-q u}  E \left[ h\left(\theta^\prime_{\gamma(V)}\circ k_{T_{-u}}\circ X \right) \right]
= \int_0^\infty \der u \ \e^{-q u}  E \left[ h\circ \rho\left(\theta^\prime_{\gamma(V)}\circ k_{T_{-u}}\circ X \right) \right].
\end{align*}
We again have an identity between Laplace transforms of two functions. In virtue of Lemma~\ref{lemma:continuity-z},  if we choose $h\in \mathcal{C}_b(\mathcal{E},\R_+)$, this functions are right-continuous on $(0,+\infty)$. Besides, because $h$ is bounded they are both bounded  (and so locally integrable and growing at most exponentially), hence by the same reasoning as in the proof of Lemma \ref{lemma:reverse-pre-sup}, we can invert the Laplace transforms and write that for any $h\in \mathcal{C}_b(\mathcal{E},\R_+)$ 
\begin{align*}
E \left[ h\left(\theta^\prime_{\gamma(V)}\circ k_{T_{-u}}\circ X \right) \right]
= E \left[ h\circ \rho\left(\theta^\prime_{\gamma(V)}\circ k_{T_{-u}}\circ X \right) \right],
\end{align*}
which completes the proof.
\end{proof}

\begin{proof}[Proof of Proposition~\ref{prop:post-sup}]
By applying the Markov property of $\underline{n}$ at $s>0$, we get 
\begin{align}
& \underline{n}\left( h\left( \theta^\prime_{\gamma(V)} \circ \varepsilon\right) \1\left( s<\gamma(V)\right)\right)
= \int_{x\in(0,+\infty)} \int_{y\geq x} \underline{n}\left( \1\left( s<\gamma(V), \varepsilon_s\in\der x, \varepsilon_{\gamma(V)}\in \der y\right) h\left( \theta^\prime_{\gamma(V)} \circ \varepsilon\right)\right) \nonumber
\\
& \quad = \int_{x\in(0,+\infty)} \int_{y\geq x} \underline{n}\left(\varepsilon_s\in\der x, \overline{\varepsilon}_{s}< y,s<V\right)  E_x\left[ h\left(\theta^\prime_{\gamma(V)}\circ k_{T_{0}}\circ X \right) \1\left(\sup_{[0,T_0]}X\in\der y\right)\right] \nonumber
\\
& \quad = \int_{x\in(0,+\infty)} \int_{y\geq x} \underline{n}\left(\varepsilon_s\in\der x, \overline{\varepsilon}_{s}< y,s<V\right)  E_x\left[ h\circ \rho\left(\theta^\prime_{\gamma(V)}\circ k_{T_{0}}\circ X \right)\1\left(\sup_{[0,T_0]}X\in\der y\right) \right] \nonumber
\\
& \quad = \underline{n}\left( h\circ \rho \left( \theta^\prime_{\gamma(V)} \circ \varepsilon\right)\1\left( s<\gamma(V)\right)\right), \label{eq:post-sup-cvm}
\end{align}
where we have used Proposition~\ref{prop:rotation-killed-x} in the third line, which is possible since $P_x$-a.s.
\begin{align*}
\sup_{[0,T_0]}X = \sup \theta^\prime_{\gamma(V)}\circ k_{T_{0}}\circ X = \sup \rho\left( \theta^\prime_{\gamma(V)}\circ k_{T_{0}}\circ X \right).
\end{align*}
Finally, since $h$ is non-negative, the monotone convergence theorem can be applied to \eqref{eq:post-sup-cvm} letting $s\downarrow 0$, which allows us to conclude.
\end{proof}

We are now ready to prove our main result, stated in the introduction, that we recall now.

\thmain*

\begin{proof}
In the unbounded variation case, $0$ is regular for both half-lines, 
so we can apply Theorem~4.10 from \cite{Duq03}, which ensures that the supremum of the excursion of $X-I$ away from zero, i.e. $\varepsilon_{\gamma(V)}$, admits a density w.r.t. to Lebesgue measure under $\underline{n}$. What is more, this theorem states that for every $x>0$, the pre and post-supremum subpaths are independent under $\underline{n}(\cdot\vert \varepsilon_\gamma=x)$. 
When the trajectories have finite variation, the conditional independence also holds, this result is due to \cite{Mil73,GrPi80} and also \cite{Cha94,Cha96}.
Hence, we can disintegrate by the law of $\varepsilon_\gamma$ and use this independence property, which together with Propositions~\ref{prop:pre-sup} and \ref{prop:post-sup}, lead to the following identities
\begin{align*}
&\underline{n}\left( F\left( \varepsilon \right) \right) =  \underline{n}\left( F\left( \left[ k_\gamma \circ \varepsilon, \theta_\gamma\circ \varepsilon\right] \right) \right)
= \int\limits_{x\in (0,+\infty)} \underline{n}\left( F\left( \left[ k_\gamma \circ \varepsilon, \theta_\gamma\circ \varepsilon\right] \right) \1_{\{\varepsilon_\gamma \in \der x\}} \right)
\\
& = \int\limits_{x\in (0,+\infty)} \underline{n}\left( F\left( \left[ k_\gamma \circ \varepsilon, \theta_\gamma\circ \varepsilon\right] \right) \,\middle|\, \varepsilon_\gamma=x\right) \underline{n}\left( \varepsilon_\gamma \in \der x \right)
\\
& = \int\limits_{x\in (0,+\infty)} \underline{n}\left( F\left( \left[ k_\gamma \circ \varepsilon, \theta^\prime_\gamma\circ \varepsilon + x \right] \right) \,\middle|\, \varepsilon_\gamma=x\right) \underline{n}\left( \varepsilon_\gamma \in \der x \right)
\\
& = \int\limits_{x\in (0,+\infty)} \int F\left( \left[ \eta, \eta' \right] \right) \underline{n}\left( k_\gamma \circ \varepsilon\in \der \eta,\theta^\prime_\gamma\circ \varepsilon + x \in\der \eta' \,\middle|\, \varepsilon_\gamma=x\right) \underline{n}\left( \varepsilon_\gamma \in \der x \right) 
\\
& = \int\limits_{x\in (0,+\infty)} 
\int\int F\left( \left[ \eta, \eta' \right] \right) \underline{n}\left( k_\gamma \circ \varepsilon\in \der \eta \,\middle|\, \varepsilon_\gamma=x\right) \underline{n}\left( \theta^\prime_\gamma\circ \varepsilon + x \in\der \eta' \,\middle|\, \varepsilon_\gamma=x\right) \underline{n}\left( \varepsilon_\gamma \in \der x \right) 
\\
& = \int\limits_{x\in (0,+\infty)} 
\int\int F\left( \left[ \eta, \eta'  \right] \right) \underline{n}\left( \rho\left(k_\gamma \circ \varepsilon\right) \in \der \eta \,\middle|\, \varepsilon_\gamma=x\right) \underline{n}\left( \rho\left(\theta^\prime_\gamma\circ \varepsilon\right) + x \in\der \eta' \,\middle|\, \varepsilon_\gamma=x\right) \underline{n}\left( \varepsilon_\gamma \in \der x \right) 
\\
& = \int\limits_{x\in (0,+\infty)} \underline{n}\left( F\left( \left[ \rho\left(k_\gamma \circ \varepsilon\right), \rho\left(\theta^\prime_\gamma\circ \varepsilon\right) + x \right] \right) \,\middle|\, \varepsilon_\gamma=x\right) \underline{n}\left( \varepsilon_\gamma \in \der x \right)
\\
& = \underline{n}\left( F\left( \left[ \rho\left(k_\gamma \circ \varepsilon\right), \rho\left(\theta^\prime_\gamma\circ \varepsilon\right) +\varepsilon_\gamma \right] \right) \right) = \underline{n}\left( F\left( \chi\circ \varepsilon \right) \right),
\end{align*}
which terminates the proof.
\end{proof}

\cormain*
\begin{proof}
For a fixed path $\varepsilon\in \mathcal{E}$ corresponding to an excursion of $X-I$ away from 0, let us identify the local time process of $\chi(\varepsilon)$, defined by the occupation density formula \eqref{eq:loc-time-inf-var}, as the measurable function $\left(\Gamma(\chi(\varepsilon),r),r\geq 0\right)$ satisfying 
\begin{align}\label{eq:loc-time-chi}
\int\limits_{0}^{V(\varepsilon)}\phi\left([\chi(\varepsilon)]_s\right) \der s = \int\limits_{0}^{\infty} \Gamma\left(\chi(\varepsilon),r \right) \phi(r)	\der r,
\end{align}
for any continuous function $\phi$ with compact support in $[0,\infty)$.
Its existence is guaranteed by Theorem~\ref{th:main} since $\chi(\varepsilon)\overset{\text{d}}{=}\varepsilon$.
The l.h.s. in this equation can be expanded in the following way (writing $\gamma = \gamma(V)$)
\begin{align}
\int\limits_{0}^{V(\varepsilon)}\phi\left([\chi(\varepsilon)]_s\right) \der s 
& = \int\limits_{0}^{\gamma(V)}\phi\left([\chi(\varepsilon)]_s\right) \der s + \int\limits_{\gamma(V)}^V \phi\left([\chi(\varepsilon)]_s\right) \der s \nonumber
\\
& = \int\limits_{0}^{\gamma(V)}\phi\left(\varepsilon_\gamma - \varepsilon_{(\gamma - s)-} \right) \der s + \int\limits_{\gamma(V)}^V \phi\left(\varepsilon_\gamma - \varepsilon_{(\gamma + V - s)-} \right) \der s \nonumber
\\
& = \int\limits_{0}^{V(\varepsilon)}\phi\left(\varepsilon_\gamma - \varepsilon_s\right) \der s
= \int\limits_0^\infty \Gamma \left(\chi'(\varepsilon),r \right)\phi(r) \der r, \label{eq:loc-time-chi-chiprime}
\end{align}
where $[\chi'(\varepsilon)]_s = \varepsilon_\gamma-\varepsilon_s$, for any $s\geq 0$. 
On the other hand, we know from Theorem~\ref{th:main} that for any function $\phi$ satisfying the conditions mentioned before, we have
\begin{align*}
\int\limits_0^\infty \Gamma \left(\chi(\varepsilon),r \right)\phi(r) \der r \overset{\text{d}}{=} \int\limits_0^\infty \Gamma \left(\varepsilon,r \right)\phi(r) \der r.
\end{align*}
Finally, this identity, together with \eqref{eq:loc-time-chi} and \eqref{eq:loc-time-chi-chiprime}, imply that
\begin{align*}
\Gamma\left(\varepsilon,\cdot\right) \overset{\text{d}}{=} \Gamma\left(\chi'(\varepsilon),\cdot\right),
\end{align*}
which terminates the proof.
\end{proof}

\begin{remark}\label{remark:Duq03}
Beyond the independence between the pre and post-supremum subpaths, Theorem~4.10 in \cite{Duq03} gives the following characterization of the law of the pre and post-supremum processes under $\underline{n}(\cdot\vert\varepsilon_\gamma=x)$, for any $x>0$, in terms of the laws $P^\uparrow$ and $P^\downarrow$, corresponding to the process $X$ conditioned respectively to stay positive or negative. We refer to \cite{Ber93,Ber96} and also \cite{Duq03} for the details on the construction of these laws.
The result in the aforementioned theorem has the following implications in our setting in the case of infinite variation. For every positive measurable functional $h:\mathcal{E}\rightarrow \R_+$ we have
\begin{align*}
\underline{n}\left(h\left(k_{\gamma(V)} \circ \varepsilon\right) \,\middle|\, \varepsilon_\gamma=x\right) 
& = 
E^{\uparrow}\left[h\left(k_{T_{x} }\circ X \right)\,\middle|\, X_{T_{x}} = x\right], 
\\
\underline{n}\left(h\left(\theta^\prime_{\gamma(V)} \circ \varepsilon\right) \,\middle|\, \varepsilon_\gamma=x\right) 
& = 
E^{\downarrow}\left[h\left(k_{T_{-x} }\circ X \right)\right].
\end{align*}
Thus, these identities combined with Propositions~\ref{prop:post-sup} and \ref{prop:pre-sup}  imply that for any $x>0$, the laws $P^{\uparrow}\circ k^{-1}_{T_{x}}\left(\cdot\middle\vert X_{T_{x}} = x\right) $ and  $P^{\downarrow}\circ k^{-1}_{T_{-x} }$ are also invariant by rotation.
Additionally, we know from \cite[Chapter~VII]{Ber96} 
that when the process drifts to $-\infty$, the law $P^\downarrow$ can be viewed  as the conditional law $\overline{n}(\cdot\vert V=\infty)$, or equivalently, as the law of $X-S$ shifted at its last passage time at the origin.
Here $\overline{n}$ denotes the excursion measure of $X-S$ away from $0$, defined as in \cite{Duq03} such that it records the final jump of the excursion.
Hence, we also have for any positive  measurable function $h$ that
\begin{align*}
\overline{n}\left( h\left( k_{T_{-x}} \circ \varepsilon\right) \,\middle|\,  V=\infty\right) = \overline{n}\left( h\circ \rho \left( k_{T_{-x}}  \circ \varepsilon\right) \,\middle|\, V=\infty\right).
\end{align*}

\end{remark}

\section{Applications}\label{sec:applications}

The study of the genealogical structure of branching processes is an essential aspect when it comes to their applications in the fields of population dynamics, population genetics and evolutionary biology. In the case of discrete state-space, the genealogy comes naturally from discrete trees, while for continuous-state processes their definition is a more delicate issue and is done via a non-Markovian process called the height process, which was introduced by Le Gall and Le Jan \cite{GaJa98} and is a functional of a SPLP.
We will now briefly outline a few connections between random trees, branching processes and Lévy processes.

\subsection{The continuum random tree}
\label{subsec:CRT}

Real trees can be defined as the continuous limiting object of rescaled discrete trees and can be coded by a continuous function in a way similar to the coding of discrete trees by their contour functions. 
\textit{Aldous' Continuum Random Tree} (the so-called CRT) can be defined as the random real tree coded by a normalized Brownian excursion $\mathbf{e}$, i.e. the positive Brownian excursion conditioned to have lifetime 1. 
More generally, the tree coded by Brownian motion (possibly with drift) reflected at 0, is called Brownian forest.
We refer to \cite{Ald93,Gall05,Eva07} for the formalism on real trees.

\subsubsection{Ray-Knight theorems}

The second Ray-Knight theorem \cite{ReYo91} establishes that the local time process of a reflected Brownian motion is Feller's branching diffusion.
More precisely, let $B$ be a Brownian motion reflected at $0$ and $(L_s^a, s,a\geq 0)$ the family of its occupation densities, where the index $s$ corresponds to the \textit{time} of the original process $B$ and $a$ is the level variable moving in the state-space of $B$. 
Consider, for $x>0$,
\[
\varsigma_x = \inf\{s:L_s^0>x\}.
\]
Then, the process $(L_{\varsigma_x}^t, t\geq 0)$ is equal in distribution to the square of a $0$-dimensional Bessel process started at $x$, that is, a \textit{standard} Feller branching diffusion $(Z_t^x,t\geq 0)$. The latter is defined as the unique strong solution of the SDE
\begin{align*}
\der Z_t^x = 2 \sqrt{Z_t^x} \der W^x_t, \ \textrm{with} \ Z_0^x=x.
\end{align*}

This may be understood as a description of the genealogy encoded in Feller's branching diffusion, meaning that reflected Brownian motion codes (in the sense of Aldous) the \textit{real tree} which describes the genealogy of the population which evolves according to Feller's diffusion \cite{Gall05}.

\subsection{Splitting trees, CMJ's and contour process}
A chronological tree is the subset of $\bigcup_{n\geq 0}\N^n\times [0,+\infty)$ containing all the \textit{existence points} of individuals living for a certain amount of time and giving birth to others during their lifetime. They are represented in the plane, as in Fig.~\ref{fig:contour} (right), with time running from bottom to top, dotted lines representing filiations between individuals: the one on the left is the parent, and that on the right its descendant.
We refer to \cite{Lam10} for the details.

\begin{center}
\begin{figure}
	\includegraphics[scale=.7]{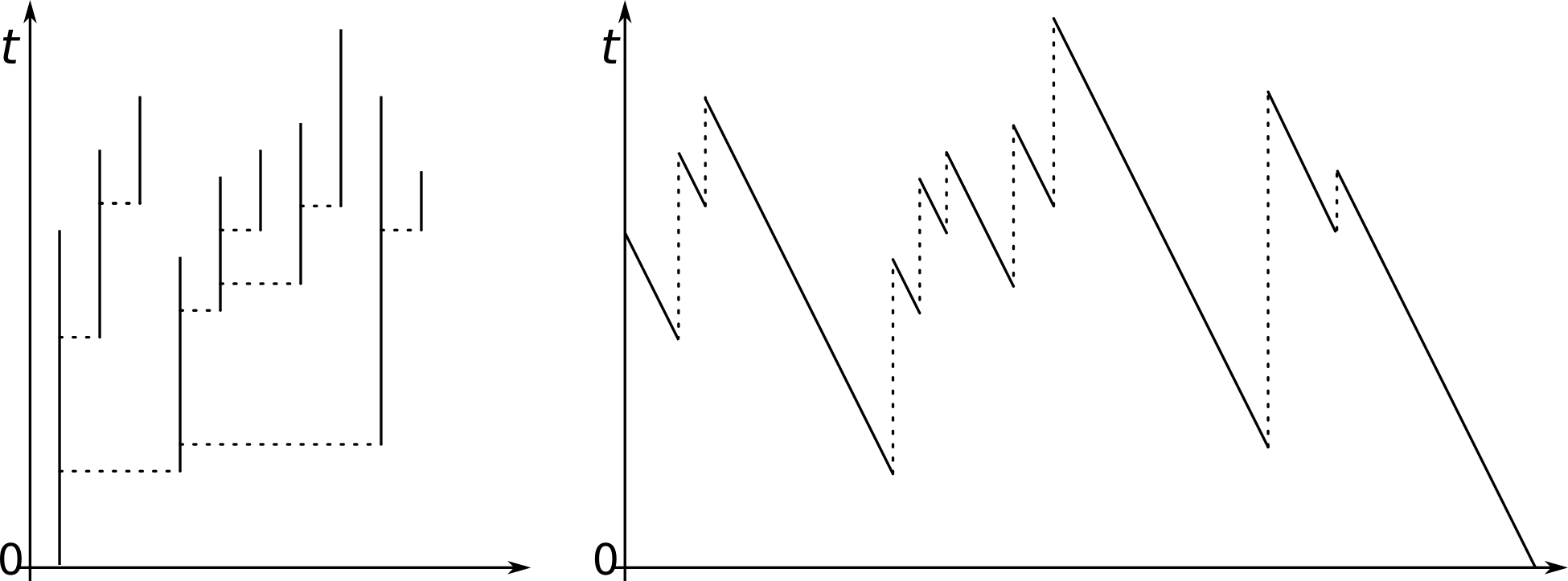}
\caption{An example of chronological tree with finite length (left) and its contour process (right).}
\label{fig:contour}
\end{figure}
\end{center}

Consider a population (or particle system) that originates at time $0$ with one single progenitor, where individuals (particles) evolve independently of each other, giving birth to i.i.d. copies of themselves at constant rate, while alive, and having a lifetime duration with general distribution. 
The family tree under this stochastic model is a \textit{splitting tree}, that can be formally defined as an element $\ctree$ randomly chosen from the set of chronological trees, characterized by a $\sigma$-finite measure $\Pi$ on $(0,\infty]$ called the \textit{lifespan measure}, satisfying $\int_{(0,\infty]}\left(r\wedge 1\right)\Pi(\der r)<\infty$.  
This means that if $\Pi$ has mass $b$, the tree corresponds to a population where individuals have i.i.d. lifetimes distributed as $\Pi(\cdot)\slash b$ and give birth to single descendants throughout their lives at constant rate $b$, all having the same independent behavior. In the general definition individuals may have infinitely many offspring and most of the following results remain valid if $\Pi$ is infinite.
 
We can define the \textit{width} or \textit{population size process} of locally finite chronological trees as a mapping $\Xi$ that maps a chronological tree $\ctree$ to the function $\xi:\R_+\rightarrow \N$ counting the number of extant individuals at time $t\geq 0$
\begin{align*}
\Xi(\ctree) \coloneqq  \left(\xi_t(\ctree), t \geq 0\right).	
\end{align*}
These functions are càdlàg, piecewise constant, from $\R_+$ into $\N$, and are absorbed at 0. 
Then we can define the extinction event $\textrm{Ext} \coloneqq \{\lim_{t\rightarrow\infty} \xi_t\left(\ctree\right)=0\}$ and the time of extinction of the population in a tree as 
\begin{align*}
T_{\textrm{Ext}}\coloneqq \inf\{t\geq 0: \xi_t(\ctree)=0\},
\end{align*}
with the usual convention $\inf \emptyset = \infty$.
A tree, or its width process $\Xi$, is said to be subcritical, critical or supercritical if
\begin{align*}
m\coloneqq\int\limits_{(0,+\infty]}r\Pi(\der r)
\end{align*}
is less than, equal to or greater than 1.

The width process $\Xi(\ctree) =\left(\xi_t(\ctree),t\geq 0\right)$ of a splitting tree is known to be a \textit{binary homogeneous Crump-Mode-Jagers process} (CMJ). This process is not Markovian, unless $\Pi$ is exponential (\textit{birth-death process}) or a Dirac mass at $\{+\infty\}$ (\textit{Yule process}). 

\subsubsection{The contour of a splitting tree}\label{subsec:contour}

As mentioned before, the genealogical structure of a chronological tree can be coded via continuous or càdlàg functions. We focus in particular on the jumping chronological contour process (JCCP) from \cite{Lam10}.
The JCCP of a chronological tree $\ctree$ with finite length $\ell=\ell(\ctree)$ (the sum of lifespans of all individuals), denoted by $\Co(\ctree)$, is a function from $[0,\ell]$ into $\R_+$, that starts at the lifespan of the ancestor  and then runs backward along the right-hand side of this first branch at speed $-1$ until it encounters a birth event, when it jumps up of a height of the lifespan of this new individual, getting to the next tip, and then repeating this procedure until it eventually hits $0$, as we can see in Fig.~\hyperref[contour]{\ref*{fig:contour}} (see \cite{Lam10} for a formal definition).

The JCCP visits all the existence times of each individual exactly once and the number of times it hits a time level, say $s\geq 0$, is equal to the number of individuals in the population at time $s$. 
More precisely, for any finite tree $\ctree$, the local time of its contour process is the population size process, that is
\begin{align*}
\left(\Gamma\left( \Co(\ctree),r\right), 0\leq r\leq T_{\textrm{Ext}} \right) = \Xi(\ctree),
\end{align*}
where $\Gamma$ is defined as in Equation~\eqref{eq:loc-time-fin-var}.

One of the main results in \cite{Lam10} states that the law of $\Co(\ctree)$ when the tree has lifespan measure $\Pi$, conditional on $\textrm{Ext}$ and on the lifespan of the root individual to be $x$,  is a spectrally positive Lévy process $Y$, with Laplace exponent $\psi(\lambda)=\lambda - \int_0^\infty (1-\exp(-\lambda r))\Pi(\der r), \ \lambda\geq 0$, started at $x$, conditioned and killed upon hitting $0$.
A consequence of this result is that, under $P_x$
\begin{align}
\left(\Gamma\left( k_{T_0}\circ Y,r\right), r\geq 0 \right)
\end{align}
is a CMJ with lifespan measure $\Pi$, starting with one progenitor with lifespan $x$.

\medskip
These arguments together with Theorem~\ref{th:main} lead to Corollary~\ref{cor-rev-loc-time-bis}.

\subsection{Other results}

In the same way as we did in Section~\ref{sec:results}, we consider now the excursion process of $X-S$ away from $0$, the canonical excursion is again denoted by $\varepsilon$, and $\overline{n}$ is the excursion measure of this process. As we pointed out in Remark~\ref{remark:Duq03}, this measure is defined here as in \cite{Duq03}, such that it records	 the final jump of the excursion.
Define for any $s\in\R_+$, the first instant at which the excursion attains its minimum on the interval $[0,s]$, that is
\begin{align*}
\nu(s) = \nu(s,\varepsilon) \coloneqq \arginf_{[0,s]} \varepsilon = \inf \left\{s^\prime\in[0,s]: \varepsilon(s^\prime -)=\underline \varepsilon_s\right\}, \nonumber
\end{align*}
where $\underline \varepsilon_{s} \coloneqq \inf_{[0,s]} \varepsilon$.
We write $\nu=\nu(V)$ for the infimum up to the lifetime of the excursion.
Then, the following results can be derived, which are analogous to those obtained in Section~\ref{sec:results}.
\begin{lemma}\label{lemma:others1}
The post-supremum of the excursion of $X-I$ conditioned to have height $x$ has the same distribution as the pre-infimum of the excursion of $X-S$ conditioned to have depth greater than $x$ and killed upon hitting $-x$ for the first time. 
More precisely, for any functional $F\in\mathcal{C}_b(\mathcal{E},\R_+)$, define the conditional expectation $\kappa\left(x\right)\coloneqq \underline{n} \left(F\left(\theta'_{\gamma}\circ \varepsilon\right) \,\middle|\, \varepsilon_{\gamma}=x\right)$. This function has a continuous version which satisfies for all $x>0$
\begin{align}
\kappa \left( x\right) =
\overline{n} \left(F\left(k_{T_{-x}}\circ \varepsilon\right) \,\middle|\, T_{-x}<\infty\right).
\end{align}
\end{lemma}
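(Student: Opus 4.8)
The plan is to identify both sides of the claimed identity with the law of $X$ conditioned to stay negative and killed at $T_{-x}$, namely $P^\downarrow\circ k_{T_{-x}}^{-1}$, and then to check that the resulting quantity depends continuously on $x$. I would start with the left-hand side. By Remark~\ref{remark:Duq03}, which records Theorem~4.10 of \cite{Duq03}, in the infinite variation case $\kappa(x)=E^{\downarrow}[F(k_{T_{-x}}\circ X)]$ for $\underline{n}(\varepsilon_\gamma\in\der x)$-almost every $x>0$. In the finite variation case the same identity holds: by Lemma~3.8 of \cite{DaLa15} (already used in the proof of Proposition~\ref{prop:rotation-killed-x}), given $\varepsilon_\gamma=x$ the post-supremum process of $X-I$ has law $P_x(\,\cdot\mid T_0<T_x)\circ k_{T_0}^{-1}$, so that $\theta'_{\gamma}\circ\varepsilon$ is, given $\varepsilon_\gamma=x$, distributed as $X$ started at $0$, conditioned to hit $-x$ before entering $(0,\infty)$, and killed at $T_{-x}$; since $X$ has no negative jumps one has $X_{T_{-x}}=-x$ on $\{T_{-x}<\infty\}$, so the post-$T_{-x}$ $h$-transform weight entering the definition of $P^\downarrow$ is a deterministic constant, and the Markov property at $T_{-x}$ identifies this conditioned law with $P^\downarrow\circ k_{T_{-x}}^{-1}$.

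Next I would show that the pre-$T_{-x}$ part of an excursion of $X-S$ conditioned on $\{T_{-x}<\infty\}$ has the same law, i.e.
\[
\overline{n}\!\left(F(k_{T_{-x}}\circ\varepsilon)\,\1_{\{T_{-x}<\infty\}}\right)=\overline{n}(T_{-x}<\infty)\;E^{\downarrow}\!\left[F(k_{T_{-x}}\circ X)\right].
\]
When $\psi'(0+)>0$ this is immediate from Remark~\ref{remark:Duq03}: there $P^\downarrow$ equals $\overline{n}(\,\cdot\mid V=\infty)$, and since an excursion of $X-S$ keeps $S$ constant on its lifetime, on $\{T_{-x}<\infty\}$ the event $\{V=\infty\}$ is measurable with respect to the post-$T_{-x}$ part of $\varepsilon$; applying the Markov property of $\overline{n}$ at $T_{-x}$ then gives $\overline{n}(\,\cdot\mid V=\infty)\circ k_{T_{-x}}^{-1}=\overline{n}(\,\cdot\mid T_{-x}<\infty)\circ k_{T_{-x}}^{-1}$. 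In the critical case $\psi'(0+)=0$ I would instead appeal to the general relation between the excursion measure of a Lévy process reflected at its supremum and the process conditioned to stay negative (see \cite[Ch.~VII]{Ber96} and \cite{ChDo05}), or pass to the limit along a sequence of subcritical Laplace exponents. Combining the two steps yields $\kappa(x)=\overline{n}(F(k_{T_{-x}}\circ\varepsilon)\mid T_{-x}<\infty)$ for $\underline{n}(\varepsilon_\gamma\in\der x)$-a.e.\ $x>0$.

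It then remains to check that the right-hand side is continuous in $x$, which exhibits the continuous version of $\kappa$ claimed in the statement. For $\overline{n}$-a.e.\ $\varepsilon$ with $\inf\varepsilon<-y$ the map $x\mapsto T_{-x}(\varepsilon)$ is continuous and strictly increasing in a neighbourhood of $y$, because $X-S$ has no negative jumps; hence $k_{T_{-x}}\circ\varepsilon\to k_{T_{-y}}\circ\varepsilon$ in the Skorokhod topology, and since $F\in\mathcal{C}_b(\mathcal{E},\R_+)$ and $\overline{n}(\inf\varepsilon<-y_0)<\infty$ for $y_0>0$, dominated convergence gives continuity of $x\mapsto\overline{n}(F(k_{T_{-x}}\circ\varepsilon)\1_{\{T_{-x}<\infty\}})$; moreover $x\mapsto\overline{n}(T_{-x}<\infty)=\overline{n}(\inf\varepsilon<-x)$ is continuous and strictly positive on $(0,\infty)$ because the depth of the generic excursion of $X-S$ has no atoms under $\overline{n}$.

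I expect the main obstacle to be the second step in the critical regime, where $\overline{n}(V=\infty)=0$ and the shortcut ``$P^\downarrow=\overline{n}(\,\cdot\mid V=\infty)$'' is unavailable: one must rely on the excursion-theoretic construction of $P^\downarrow$ valid for every (sub)critical $\psi$ and, in the finite variation case, keep track of the final jump of the excursion of $X-S$ recorded by $\overline{n}$, using the finite-variation analogue of Remark~\ref{remark:Duq03} drawn from \cite{DaLa15,Cha96} rather than from \cite{Duq03}.
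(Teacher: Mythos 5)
Your route is genuinely different from the paper's: you try to identify both sides of the claimed identity with $P^\downarrow\circ k_{T_{-x}}^{-1}$, using Remark~\ref{remark:Duq03} (Duquesne's Theorem~4.10, resp. Lemma~3.8 of \cite{DaLa15}) for the left-hand side and the identification $P^\downarrow=\overline{n}(\cdot\mid V=\infty)$ plus the Markov property at $T_{-x}$ for the right-hand side. The paper never passes through $P^\downarrow$: it computes one functional of the first excursion of $X-I$ with height larger than $x$ in two ways --- once via the compensation formula for the excursions of $X-I$, producing $\underline{n}\left(F(\theta'_\gamma\circ\varepsilon)\,\middle|\,\varepsilon_\gamma=y+a\right)$ integrated against a measure $\mu_x(\der a,\der y)$, and once via the strong Markov property at $A_x=\inf\{t:X_t-I_t>x\}$ followed by the compensation formula for the excursions of $S-X$, producing $\overline{n}\left(F(k_{T_{-y-a}}\circ\varepsilon)\,\middle|\, T_{-y-a}<\infty\right)$ against the same $\mu_x$ --- and then upgrades the resulting $\Lambda$-a.e. identity using the absolute continuity of $\underline{n}(\varepsilon_\gamma\in\der x)$ and Lemma~\ref{lemma:continuity-w}. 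Your reduction would be shorter when it applies, but it outsources the crux to external identifications rather than establishing them.

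The genuine gap is exactly where you flag it: the critical case $\psi'(0+)=0$. There $\overline{n}(V=\infty)=0$, so the shortcut $P^\downarrow=\overline{n}(\cdot\mid V=\infty)$ is unavailable and $P^\downarrow$ started at $0$ only exists through an $h$-transform/limiting construction; the ``general relation'' you invoke between $\overline{n}$ and the process conditioned to stay negative is precisely the statement $\overline{n}\left(k_{T_{-x}}\circ\varepsilon\in\cdot\,\middle|\, T_{-x}<\infty\right)=P^\downarrow\circ k_{T_{-x}}^{-1}$, which, combined with Remark~\ref{remark:Duq03}, is essentially the lemma itself --- you neither give a precise reference nor a proof. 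The fallback of approximating a critical $\psi$ by subcritical ones would require continuity of $\overline{n}$, of the conditional laws $\underline{n}(\cdot\mid\varepsilon_\gamma=x)$ and of the killed paths with respect to the Laplace exponent, none of which is established or obvious. Since the critical case (e.g. $\psi(\lambda)=\lambda^2$, the Feller diffusion/CRT application) is the one the paper most needs, this cannot be waved away. A milder version of the same issue affects your left-hand side in the finite-variation critical case, where passing from ``conditioned to hit $-x$ before $(0,\infty)$, killed at $T_{-x}$'' to ``$P^\downarrow$ killed at $T_{-x}$'' again rests on the $h$-transform construction from the boundary point $0$ and deserves an argument rather than an appeal to the Markov property. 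The remaining ingredients (the a.e.-to-everywhere upgrade via continuity of $x\mapsto\overline{n}(F(k_{T_{-x}}\circ\varepsilon)\mid T_{-x}<\infty)$, which is the paper's Lemma~\ref{lemma:continuity-w}) are sound.
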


In order to demonstrate the previous result we need the following lemma on the continuity of the functions in the stated identity. This next lemma is very close to Lemma~\ref{lemma:continuity-z} and the details of the proof are given in Section~\ref{sec:proofs}.
\begin{lemma}\label{lemma:continuity-w}
For every $x>0$ and every functional $F\in\mathcal{C}_b(\mathcal{E},\R_+)$, the function $w:[0,+\infty)\rightarrow	[0,+\infty)$ defined as 
\begin{align*}
w(x) \coloneqq \overline{n}\left( F\left( k_{T_{-x}} \circ \varepsilon \right)\,\middle|\, T_{-x}<+\infty \right)
\end{align*}
is continuous.
\end{lemma}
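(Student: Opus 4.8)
The plan is to write the conditional expectation as a ratio,
\[
w(x)=\frac{N(x)}{D(x)},\qquad N(x):=\overline{n}\bigl(F(k_{T_{-x}}\circ\varepsilon)\,\1_{\{T_{-x}<+\infty\}}\bigr),\qquad D(x):=\overline{n}(T_{-x}<+\infty),
\]
and to prove separately that $D$ is finite, strictly positive and continuous on $(0,+\infty)$ and that $N$ is continuous on $(0,+\infty)$; since $0<D(x)<+\infty$, the conclusion $w=N/D$ then follows.

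For the denominator, one first recalls that $D(x)<+\infty$ for every $x>0$ by standard excursion theory for $X-S$ (excursions of a given minimal depth occur at finite rate). To get continuity and positivity, fix $0<x_0<x$: since $X$ has no negative jumps, its supremum is frozen along each excursion of $X-S$, so the Markov property of $\overline{n}$ at the passage time $T_{-x_0}$ shows that, conditionally on $\{T_{-x_0}<+\infty\}$, the remainder of the excursion behaves as $X$ issued from $-x_0$ and killed at its first return to $[0,+\infty)$; hence the excursion reaches level $-x$ before ending with probability $P_{-x_0}\bigl(T_{-x}<T_{[0,+\infty)}\bigr)$, which equals $W(x_0)/W(x)$ by the two-sided exit identity \eqref{eq:exit-two-side} (applied after the translation sending $-x\mapsto 0$ and $0\mapsto x$). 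Therefore $D(x)=D(x_0)\,W(x_0)/W(x)$; as this holds for all $0<x_0<x$, the map $x\mapsto W(x)D(x)$ is a positive constant $c$, so $D=c\,W^{-1}$ is continuous and strictly positive on $(0,+\infty)$ because the scale function $W$ is continuous and positive there.

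For the numerator, fix $x_1>0$ and pick $x_0\in(0,x_1)$. For $x\ge x_0$ one has $\bigl|F(k_{T_{-x}}\circ\varepsilon)\,\1_{\{T_{-x}<+\infty\}}\bigr|\le\Vert F\Vert_\infty\,\1_{\{T_{-x_0}<+\infty\}}$, and $\1_{\{T_{-x_0}<+\infty\}}$ is $\overline{n}$-integrable since $D(x_0)<+\infty$, so by dominated convergence it is enough to show that $k_{T_{-x}}\circ\varepsilon\to k_{T_{-x_1}}\circ\varepsilon$ in the Skorokhod topology as $x\to x_1$, for $\overline{n}$-a.e. excursion with $T_{-x_0}<+\infty$, because $F$ is bounded and continuous. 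Two ingredients give this. First, $x\mapsto T_{-x}$ is non-decreasing and right-continuous, and it is left-continuous at $x_1$ because, $\varepsilon$ having no negative jumps, its running infimum is continuous and, by Rogozin's regularity of every level for the half-line below it \cite{Rog66} combined with the Markov property of $\overline{n}$ at $T_{-y}$ (for $y<x_1$), $\overline{n}$-a.e. excursion reaching $-x_1$ leaves it immediately downwards, so its running infimum is not constant at level $-x_1$. Second, $\varepsilon$ is $\overline{n}$-a.e. continuous at $T_{-x_1}$ (first passage of an SPLP below a level occurs with no overshoot). Consequently the two stopped paths coincide off the interval with endpoints $T_{-x}$ and $T_{-x_1}$, which shrinks to $\{T_{-x_1}\}$, and on this interval both of them stay within $o(1)$ of $-x_1$ by continuity of $\varepsilon$ at $T_{-x_1}$; hence they converge uniformly, a fortiori in the Skorokhod topology. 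Thus $N$ is continuous, and so is $w$.

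The main obstacle is the left-continuity of $x\mapsto T_{-x}$ in the numerator step, i.e. excluding a flat stretch of the excursion's running infimum exactly at the prescribed level $-x_1$ (equivalently, an atom of $\inf\varepsilon$ at $-x_1$ under $\overline{n}$); this is precisely where spectral positivity and Rogozin's regularity result enter. It is also the structural reason why full continuity holds here, whereas only right-continuity is claimed in Lemma~\ref{lemma:continuity-z}: there the extra operator $\theta^\prime_{\gamma(V)}$ brings in the location of the supremum of the path killed at $T_{-x}$, which need not vary left-continuously with $x$.
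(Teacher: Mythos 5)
Your proof is correct and follows essentially the same route as the paper: dominated convergence based on the $\overline{n}$-a.e.\ continuity of $x\mapsto k_{T_{-x}}\circ\varepsilon$ (no overshoot below a level plus regularity of every level for the half-line beneath it), combined with $0<\overline{n}(T_{-x}<+\infty)<+\infty$. The only difference is one of detail: where you argue the pathwise convergence by hand (and supply the identity $\overline{n}(T_{-x}<\infty)=c/W(x)$ to get continuity and positivity of the denominator), the paper simply invokes Propositions VI.2.11--2.12 of Jacod--Shiryaev for the continuity of $T_{-x}(\varepsilon)$ and of the stopped path, and asserts the finiteness and positivity of $\overline{n}(T_{-x}<+\infty)$ without further comment.
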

\begin{proof}
See Section~\ref{sec:proofs}.
\end{proof}

\begin{proof}[Proof of Lemma~\ref{lemma:others1}]
Fix $x>0$ and let us consider the first excursion of $X-I$ with height greater than $x$, denoted $\varepsilon^{(x)}$. Let $G_x$ be the left-end point of this excursion and define
\begin{align*}
A_x & \coloneqq \inf \left\{t>0: X_t-I_t>x \right\} \\
D_x & \coloneqq \inf \left\{t>A_x: I_t=X_t \right\}.
\end{align*}
It is not hard to see that $\varepsilon^{(x)}$ straddles $A_x$ and that $D_x$ is its right-end point.

For any positive measurable function $F$, and any $y,a,$ such that $y>x-a$, we have the following identity for the post-supremum of the excursion $\varepsilon^{(x)}$,
\begin{align}
&E\left[ F(\theta'_\gamma\circ \varepsilon^{(x)})\1\left(\varepsilon^{(x)}(\gamma)\in \der y+ a, -I_{G_x}\in \der a, S_{G_x}<y, A_x<+\infty\right) \right] \nonumber \\
& \ = E\left[ \sum\limits_{u:\Delta \tau_u>0} F(e_u\circ \theta'_\gamma)\1\left(e_u(\gamma)\in \der y +a,\ -I_{\tau_{u-}}\in \der a,\  S_{\tau_{u-}}<y,\ \sup\limits_{s<u} e_s <x\right) \right], \nonumber 
\end{align}
we can apply the compensation formula to obtain this is equal to
\begin{align}
& \ = E\left[ \int\limits_0^\infty \der u \ \1\left(-I_{\tau_{u}}\in \der a,\ S_{\tau_{u}}<y,\ \sup\limits_{s<\tau_u} (X-I)_s <x\right) \right] \underline{n}\left( F\left(\theta'_\gamma\circ  \varepsilon\right),\ \sup \varepsilon_\gamma \in \der y +a\right) \nonumber \\
& \ = \der a \ P\left( S_{T_{-a}}<y,\ \sup\limits_{s<\tau_u} (X-I)_s <x\right)\underline{n}\left( F\left(\theta'_\gamma\circ  \varepsilon\right),\ \varepsilon_\gamma \in \der y +a\right) \nonumber \\
& \ = \der a \ P\left( S_{T_{-a}}<y,\ \sup\limits_{s<\tau_u} (X-I)_s <x\right) \underline{n}\left( \varepsilon_\gamma \in \der y +a\right) \underline{n}\left( F\left(\theta'_\gamma\circ  \varepsilon\right) \,\middle|\,\varepsilon_\gamma = y +a \right), \label{eq:lhs-F1}
\end{align}
where the second line holds since the inverse local time $\tau_u$ is actually $T_{-u}$, hence $-I_{\tau_u}= u$. 
Now notice that $A_x$ is a stopping time for the process $X$, so thanks to the strong Markov property, \eqref{eq:lhs-F1} can also be expressed as follows
\begin{align}
&E\left[ \int\limits_{z\in [x-a,y]} \1\left(A_x\in \der z, -I_{G_x}\in \der a, S_{G_x}<y, A_x<+\infty\right) \right. \nonumber \\
& \qquad \qquad \qquad \qquad \qquad \left. \times E_z\left[ F\left( \theta'_{\gamma(V)}\circ k_{T_{-a}}\circ X\right) \1\left(S_{T_{-a}}\in \der y, T_{-a}<+\infty\right) \right]  \vphantom{\int\limits_{z\in [x-a,y]}}\right] \nonumber \\
& = \int\limits_{z\in [x-a,y]} P\left(A_x\in \der z, -I_{G_x}\in \der a, S_{G_x}<y, A_x<+\infty\right) \nonumber \\
&\ \times P_z\left( S_{T_{-a}}\in \der y, T_{-a}<+\infty\right) E\left[ F\left( \theta'_{\gamma(V)}\circ k_{T_{-a}}\circ X\right) \,\middle|\, S_{T_{-a}}= y, T_{-a}<+\infty \right] \label{eq:rhs-F1}
\end{align}

Additionally, if we let $(\vartheta_u)_{u\geq 0}$ be the inverse of the local time at $0$ of the process $S-X$, we can expand the last factor in the r.h.s. of \eqref{eq:rhs-F1} as follows, with the help of the compensation formula,
\begin{align}
& E_z\left[ F\left( \theta'_{\gamma(V)}\circ k_{T_{-a}}\circ X\right) \1\left(S_{T_{-a}}\in \der y, T_{-a}<+\infty\right) \right] \nonumber \\
& \qquad = E_z\left[ \sum\limits_{u:\Delta \vartheta_u>0} 1\left( \inf (e_u)>y+a, I_{\vartheta_{u-}}>-a, X_{\vartheta_{u-}}\in \der y\right) F\left( k_{T_{-y-a}}\circ e_u \right)\right] \nonumber \\
& \qquad = \int_0^\infty \der u P_z\left( I_{\vartheta_u}>-a, X_{\vartheta_u}\in \der y\right) \overline{n} \left(F\left( k_{T_{-y-a}}\circ \varepsilon \right)\1_{\{T_{-y-a} <+\infty\}} \right). \label{eq:rhs-F1-last-factor}
\end{align}

By choosing $F \equiv 1$ in \eqref{eq:lhs-F1}, \eqref{eq:rhs-F1} and \eqref{eq:rhs-F1-last-factor}, we get the following identities defining a measure on $\R^2$, that depends on $x$ and that we denote by $\mu_x(\der a, \der y)$, i.e. for any $y>x-a$ we have
\begin{align}
&\mu_x(\der a, \der y) \coloneqq \der a \ P\left( S_{T_{-a}}<y,\ \sup\limits_{s<\tau_u} (X-I)_s <x\right) \underline{n}\left( \varepsilon_\gamma \in \der y +a\right) \nonumber \\
& = \int\limits_{z\in [x-a,y]} P\left(A_x\in \der z, -I_{G_x}\in \der a, S_{G_x}<y, G_x<+\infty\right) P_z\left( S_{T_{-a}}\in \der y, T_{-a}<+\infty\right) \nonumber \\
& =\int\limits_{z\in [x-a,y]} P\left(A_x\in \der z, -I_{G_x}\in \der a, S_{G_x}<y, G_x<+\infty\right) \times \nonumber \\
& \qquad \qquad \qquad \qquad \qquad \quad \int_0^\infty \der u P_z\left( I_{\vartheta_u}>-a, X_{\vartheta_u}\in \der y\right) \overline{n} \left(T_{-y-a} <+\infty \right).
 \label{eq:mu_x-dady}
\end{align}

Finally, combining \eqref{eq:lhs-F1}, \eqref{eq:rhs-F1}, \eqref{eq:rhs-F1-last-factor} and \eqref{eq:mu_x-dady} we get that,
\begin{align*}
\mu_x\left(\der a, \der y\right) \underline{n}\left( F\left(\theta'_\gamma\circ  \varepsilon\right) \,\middle|\,\varepsilon_\gamma = y +a \right)
= \mu_x\left(\der a, \der y\right)  \overline{n}\left( F\left( k_{T_{-y-a}}\circ \varepsilon\right) \,\middle|\, T_{-y-a}<+\infty\right).
\end{align*}

Let $\Lambda(\der a, \der y)$ denote the Lebesgue measure in $\R^2$.
We know from \cite[Th. 4.10]{Duq03} that the law of $\varepsilon_\gamma$ is absolutely continuous w.r.t. Lebesgue measure under $\underline{n}$, so $\mu$ has a density with respect to $\Lambda(\der a, \der y)$. Since the previous identity holds $\Lambda$ a.e., and thanks to Lemma~\ref{lemma:continuity-w}, we have that $\overline{n}( F( k_{T_{-x'}}\circ \varepsilon ) \,|\, T_{-x'}<+\infty )$ is continuous for every $x'>0$ and $F\in\mathcal{C}_b(\mathcal{E},\R_+)$, we can conclude that the conditional measure $\underline{n}\left( F\left(\theta'_\gamma\circ  \varepsilon\right) \,\middle|\,\varepsilon_\gamma = x' \right)$ admits a continuous version that is equal to 
\[\overline{n}\left( F\left( k_{T_{-x'}}\circ \varepsilon\right) \,\middle|\, T_{-x'}<+\infty\right),\]
which proves the lemma.
\end{proof}

\begin{prop}
For any $x>0$, the law of $k_{T_{-x}}\circ \varepsilon$ is invariant by space-time reversal under $\overline{n} \left(\cdot \,\middle|\, T_{-x}<\infty\right)$, that is for any measurable $F:\mathcal{E}\to\R_+$,
$$
\overline{n} \left(F\left(k_{T_{-x}}\circ \varepsilon\right) \,\middle|\, T_{-x}<\infty\right) =\overline{n} \left(F\circ\rho\left( k_{T_{-x}}\circ \varepsilon\right) \,\middle|\, T_{-x}<\infty\right).
$$
In addition, 
\begin{equation}
\label{eqn:densidad}
\overline{n}\left(F\left(k_{\nu}\circ \varepsilon\right)\,\middle|\,  -\varepsilon_\nu=x,V<\infty \right)=\overline{n}(F(k_{T_{-x}}\circ \varepsilon) \,|\, T_{-x}<\infty),
\end{equation}
which implies that $k_{\nu}\circ \varepsilon$ is invariant by space-time reversal under $\overline{n}\left(\cdot\,|\,  V<\infty \right)$.
\end{prop}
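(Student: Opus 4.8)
The plan is to reduce all three assertions to results already established, namely Proposition~\ref{prop:post-sup} (rotation-invariance of the post-supremum path of $X-I$ under $\underline n$), Lemma~\ref{lemma:others1} (which identifies, for $k_{T_{-x}}\circ\varepsilon$, the law $\overline n(\,\cdot\,|\,T_{-x}<\infty)$ with the conditional law $\underline n(\,\cdot\,|\,\varepsilon_\gamma=x)$ of the post-supremum path of $X-I$), and the strong Markov property of the excursion measure $\overline n$. The geometric fact underpinning everything is that the \emph{depth} functional $\omega\mapsto-\inf\omega$ is invariant under the rotation $\rho$, and that for the post-supremum path $\theta^\prime_{\gamma(V)}\circ\varepsilon$ of an excursion of $X-I$ this depth equals $\varepsilon_\gamma$ (because $X$ has no negative jumps, so $\varepsilon_V=0$ and the path runs from $0$ down to $-\varepsilon_\gamma$).

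\emph{Rotation-invariance under $\overline n(\,\cdot\,|\,T_{-x}<\infty)$.} I would first disintegrate the identity of Proposition~\ref{prop:post-sup} with respect to the $\rho$-invariant functional $\varepsilon_\gamma=-\inf(\theta^\prime_{\gamma(V)}\circ\varepsilon)$. Since both the disintegrating measure $\underline n(\varepsilon_\gamma\in dx)$ and the conditioning event $\{\varepsilon_\gamma=x\}$ are unchanged when $\theta^\prime_{\gamma(V)}\circ\varepsilon$ is replaced by its rotation, one gets that for $\underline n(\varepsilon_\gamma\in\cdot)$-a.e.\ $x$ the conditional law $\underline n(\theta^\prime_{\gamma(V)}\circ\varepsilon\in\cdot\,|\,\varepsilon_\gamma=x)$ is invariant under $\rho$; here it suffices to test against a countable measure-determining family in $\mathcal{C}_b(\mathcal{E},\R_+)$, so no continuity of $\rho$ is needed yet. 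By Lemma~\ref{lemma:others1} this conditional law coincides, in its continuous-in-$x$ version, with $\overline n(k_{T_{-x}}\circ\varepsilon\in\cdot\,|\,T_{-x}<\infty)$ for every $x>0$, so the latter is $\rho$-invariant for a.e.\ $x$ and then, by the continuity in $x$ furnished by Lemmas~\ref{lemma:others1} and~\ref{lemma:continuity-w} (together with the continuity of $\rho$ on $\mathcal{E}$), for every $x>0$; a functional monotone class argument removes the restriction $F\in\mathcal{C}_b(\mathcal{E},\R_+)$. In the strictly subcritical case this could alternatively be read off from Remark~\ref{remark:Duq03}, but the event $\{V=\infty\}$ used there is $\overline n$-negligible in the critical case, whereas $\{T_{-x}<\infty\}$ is not, which is why I would take the present route.

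\emph{Identity~\eqref{eqn:densidad}.} I would apply the strong Markov property of $\overline n$ at the stopping time $T_{-x}$: on $\{T_{-x}<\infty\}$ and conditionally on $k_{T_{-x}}\circ\varepsilon$, the post-$T_{-x}$ part of $\varepsilon$ is a copy of the reflected process $X-S$ issued from $-x$, run until its first return to $0$ and with its terminal jump recorded; call its law $\mathbb{Q}_{-x}$. Because $X$ has no negative jumps the excursion reaches $-x$ continuously, so the global depth $-\varepsilon_\nu$ equals the depth of that post-$T_{-x}$ part and $\{V<\infty\}$ is measurable with respect to it. Testing against $g(-\varepsilon_\nu)\1_{\{V<\infty\}}$ and comparing with the direct disintegration of $\overline n(\,\cdot\,;V<\infty)$ along $-\varepsilon_\nu$, the common $\mathbb{Q}_{-x}$-factor (obtained by also taking $F\equiv1$) cancels between numerator and denominator, leaving $\overline n(F(k_{T_{-x}}\circ\varepsilon)\,|\,-\varepsilon_\nu=d,\,V<\infty)=\overline n(F(k_{T_{-x}}\circ\varepsilon)\,|\,T_{-x}<\infty)$ for a.e.\ $d>x$. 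Letting $x\uparrow d$, using that $k_{T_{-x}}\circ\varepsilon\to k_\nu\circ\varepsilon$ on $\{-\varepsilon_\nu=d\}$ and Lemma~\ref{lemma:continuity-w} on the right-hand side, yields~\eqref{eqn:densidad} understood via its continuous version; this is meaningful because the depth $-\varepsilon_\nu$ has a density under $\overline n$, inherited through Lemma~\ref{lemma:others1} and \cite[Th.~4.10]{Duq03} from that of $\varepsilon_\gamma$.

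\emph{Rotation-invariance under $\overline n(\,\cdot\,|\,V<\infty)$, and the main obstacle.} Combining the two previous steps fibrewise gives, for every $x>0$, $\overline n(F(k_\nu\circ\varepsilon)\,|\,-\varepsilon_\nu=x,\,V<\infty)=\overline n(F(k_{T_{-x}}\circ\varepsilon)\,|\,T_{-x}<\infty)=\overline n(F\circ\rho(k_{T_{-x}}\circ\varepsilon)\,|\,T_{-x}<\infty)=\overline n(F\circ\rho(k_\nu\circ\varepsilon)\,|\,-\varepsilon_\nu=x,\,V<\infty)$; since $-\varepsilon_\nu$ is a $\rho$-invariant functional of $k_\nu\circ\varepsilon$, integrating this against $\overline n(-\varepsilon_\nu\in dx\,|\,V<\infty)$ gives the second rotation-invariance statement. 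The main obstacle I anticipate is making the Markov-property step fully rigorous: obtaining the strong Markov property of $\overline n$ at $T_{-x}$ in exactly the form required, and identifying $k_{T_{-x}}\circ\varepsilon$ with $k_\nu\circ\varepsilon$ on the $\overline n$-negligible fibre $\{-\varepsilon_\nu=x\}$ (on which literally $T_{(-\infty,-x)}=+\infty$), which is what forces the limit $x\uparrow d$ and the systematic reliance on the continuity-in-the-level lemmas. The rest — the passage from a.e.\ $x$ to every $x$, and from $\mathcal{C}_b$ to arbitrary measurable test functions — is routine but must be carried out with care.
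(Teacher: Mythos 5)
Your proposal is correct and its overall architecture is the paper's: the first identity is deduced from Proposition~\ref{prop:post-sup} and Lemma~\ref{lemma:others1} (with the a.e.-in-$x$ disintegrated statement upgraded to all $x>0$ via the continuity of Lemma~\ref{lemma:continuity-w}, then extended from $\mathcal{C}_b$ to all non-negative measurable $F$ by density and monotone convergence), and the final assertion follows by combining the first two fibrewise along $-\varepsilon_\nu$ and integrating. Where you genuinely diverge is the proof of \eqref{eqn:densidad}. The paper also applies the strong Markov property of $\overline{n}(\cdot\,|\,V<\infty)$ at $T_{-x}$, but rather than your conditional-independence cancellation followed by a limit $x\uparrow d$, it computes $\overline{n}\left(F\left(k_{T_{-x}}\circ\varepsilon\right),-\varepsilon_\nu\in(x,x+h)\,\middle|\,V<\infty\right)$ in closed form: the post-$T_{-x}$ contribution is the two-sided exit probability $P_{-x}(T_0<T_{-(x+h)})=1-W(x)/W(x+h)$, so dividing by $h$ and using the a.e. differentiability of the scale function $W$ produces an explicit density with factor $W'(x)/W(x)$ that cancels upon taking $F\equiv1$. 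This buys two things your route lacks: it \emph{proves} that $-\varepsilon_\nu$ is absolutely continuous under $\overline{n}(\cdot\,|\,V<\infty)$ --- your claim that this density is inherited through Lemma~\ref{lemma:others1} from that of $\varepsilon_\gamma$ does not follow, since that lemma identifies conditional path laws, not the marginal law of the depth --- and it sidesteps the double limit you flag as the main obstacle, in which the exceptional null set of the disintegration depends on the level $x$ that is subsequently sent to $d$. Your factorization is otherwise sound; the identification of $k_{T_{-x}}\circ\varepsilon$ with $k_{\nu}\circ\varepsilon$ on the fibre is treated just as informally in the paper (inside the $h\downarrow0$ limit), and your remark that Remark~\ref{remark:Duq03} is unusable in the critical case because $\{V=\infty\}$ is $\overline{n}$-negligible correctly explains the paper's choice of route.
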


\begin{proof}
As a consequence of Proposition~\ref{prop:post-sup} and Lemma~\ref{lemma:others1}, the first equation of the Proposition holds for any non-negative $F$ which is continuous and bounded. The result can be extended to any non-negative, bounded $F$ by density and to any non-negative $F$ by monotone convergence.\\
Now recall that the scale function $W$ is almost everywhere differentiable (see \cite{Kyp06,Don07}). Let us show that for every functional $F$ as in the statement, the measure
\begin{align*}
\overline{n}\left(F\left(k_{\nu}\circ \varepsilon\right), -\varepsilon_\nu\in dx\,|\, V<\infty \right),\quad x>0,
\end{align*}
has a density equal to
\begin{align*}
\overline{n}(F(k_{T_{-x}}\circ \varepsilon) \,|\, T_{-x}<\infty) \ \frac{\overline{n}(T_{-x}<\infty \,|\,V<\infty)}{P_{-x}(T_0<\infty)}\ \frac{W'(x)}{W(x)} .
\end{align*}

Applying the strong Markov property of $\underline n(\cdot\,|\, V<\infty)$ at $T_{-x}$, we get that 
\begin{align*}
\overline{n}\left(F\left(k_{T_{-x}}\circ \varepsilon\right), -\varepsilon_\nu\in (x,x+h)\,|\, V<\infty \right)
\end{align*}
is equal to
\begin{eqnarray*}
 &=&\overline{n}\left(F\left(k_{T_{-x}}\circ \varepsilon\right), T_{-x}<\infty \,|\, V<\infty \right)  P_{-x}(T_0<T_{-(x+h)}\,|\, T_0<\infty)\\
  &=&\overline{n}\left(F\left(k_{T_{-x}}\circ \varepsilon\right)\,|\,T_{-x}<\infty)\,\overline{n}(T_{-x}<\infty \,|\, V<\infty \right)  P_{-x}(T_0<T_{-(x+h)}\,|\, T_0<\infty)\\
  &=&\overline{n}(F(k_{T_{-x}}\circ \varepsilon) \,|\, T_{-x}<\infty) \ \frac{\overline{n}(T_{-x}<\infty \,|\,V<\infty)}{P_{-x}(T_0<\infty)}\ P_{-x}(T_0<T_{-(x+h)})\\
  &=&\overline{n}(F(k_{T_{-x}}\circ \varepsilon) \,|\, T_{-x}<\infty) \ \frac{\overline{n}(T_{-x}<\infty \,|\,V<\infty)}{P_{-x}(T_0<\infty)}\ \left(1-\frac{W(x)}{W(x+h)}\right),
\end{eqnarray*}
and since $W$ is differentiable at $x$, we find that 
\begin{multline*}
\lim_{h\downarrow 0} \frac 1h\overline{n}\left(F\left(k_{T_{-x}}\circ \varepsilon\right), -\varepsilon_\nu\in (x,x+h)\,|\, V<\infty \right)
\\= \overline{n}(F(k_{T_{-x}}\circ \varepsilon) \,|\, T_{-x}<\infty) \ \frac{\overline{n}(T_{-x}<\infty \,|\,V<\infty)}{P_{-x}(T_0<\infty)}\ \frac{W'(x)}{W(x)},
\end{multline*}
so we get 
\begin{multline*}
 \overline{n}\left(F\left(k_{T_{-x}}\circ \varepsilon\right), -\varepsilon_\nu\in dx\,|\, V<\infty \right)
 =\overline{n}\left(F\left(k_{\nu}\circ \varepsilon\right), -\varepsilon_\nu\in dx\,|\, V<\infty \right)
\\= \overline{n}(F(k_{T_{-x}}\circ \varepsilon) \,|\, T_{-x}<\infty) \ \frac{\overline{n}(T_{-x}<\infty \,|\,V<\infty)}{P_{-x}(T_0<\infty)}\ \frac{W'(x)}{W(x)}\, dx,
\end{multline*}
which is the announced result.
Now taking $F\equiv 1$, we get 
\begin{align*}
\overline{n}\left(-\varepsilon_\nu\in dx\,|\, V<\infty \right)
 = \frac{\overline{n}(T_{-x}<\infty \,|\,V<\infty)}{P_{-x}(T_0<\infty)}\ \frac{W'(x)}{W(x)}\, dx,
\end{align*}
so combining the last two equalities, we arrive at 
\begin{multline}
 \overline{n}\left(F\left(k_{\nu}\circ \varepsilon\right), -\varepsilon_\nu\in dx\,|\, V<\infty \right)
\\= \overline{n}(F(k_{T_{-x}}\circ \varepsilon) \,|\, T_{-x}<\infty) \ \overline{n}\left(-\varepsilon_\nu\in dx\,|\, V<\infty \right),
\label{eqn:medidas}
\end{multline}
which can also be expressed as in \eqref{eqn:densidad}
\begin{align*}
 \overline{n}\left(F\left(k_{\nu}\circ \varepsilon\right)\,|\, -\varepsilon_\nu=x, V<\infty \right)
= \overline{n}(F(k_{T_{-x}}\circ \varepsilon) \,|\, T_{-x}<\infty).
\end{align*}
Now from \eqref{eqn:medidas} and the first result of the Proposition, we get 
\begin{eqnarray*}
 \overline{n}\left(F\circ\rho\left(k_{\nu}\circ \varepsilon\right), -\varepsilon_\nu\in dx\,|\, V<\infty \right)
&=& \overline{n}(F\circ \rho(k_{T_{-x}}\circ \varepsilon) \,|\, T_{-x}<\infty) \ \overline{n}\left(-\varepsilon_\nu\in dx\,|\, V<\infty \right)\\
&=&\overline{n}(F(k_{T_{-x}}\circ \varepsilon) \,|\, T_{-x}<\infty) \ \overline{n}\left(-\varepsilon_\nu\in dx\,|\, V<\infty \right)\\
&=& \overline{n}\left(F\left(k_{\nu}\circ \varepsilon\right), -\varepsilon_\nu\in dx\,|\, V<\infty \right),
\end{eqnarray*}
and integrating over $x$ the last equality, we get
\begin{align*}
 \overline{n}\left(F\circ\rho\left(k_{\nu}\circ \varepsilon\right)\,|\, V<\infty \right)
=\overline{n}\left(F\left(k_{\nu}\circ \varepsilon\right)\,|\, V<\infty \right),
\end{align*}
which terminates the proof.
\end{proof}

\section{Remaining proofs}\label{sec:proofs}

\begin{proof}[Proof of Lemma~\ref{lemma:continuity-on-s}]
Every function $f\in\mathcal{H}$ can be expressed as $f(\varepsilon) = h(\varepsilon) \e^{\alpha V(\varepsilon)}$, for a non-negative bounded function $h$ and a non-negative constant $\alpha$.
Hence, here we want to prove that for every non-negative bounded function $h$ and any non-negative constant $\alpha$, the functions
\begin{align*}
&\underline n\left(   h\left( k_{\gamma(s)}\circ \varepsilon\right) \e^{\alpha \gamma(s)} \1_{\{s<V\}}\right)
\\
&\underline n\left(   h\left( k_{\gamma(V)}\circ \varepsilon\right) \e^{\alpha \gamma(s)} \1_{\{\gamma(V)<s<V\}}\right)
\end{align*}
are right-continuous at every $s>0$. 

Let us start by $(i)$.
Fix $s>0$, and a sequence $(s_n)\subset\R_+$ such that $s_n\downarrow s$. 
For $\delta>0$, define the following subsets of $\mathcal{E}$:
\begin{align*}
\Upsilon_s(\delta)\coloneqq \left\{\varepsilon\in \mathcal{E} : \overline{\varepsilon}(s-\delta)=\overline{\varepsilon}(s+\delta)\right\},
\end{align*}
Then, we can analyze the continuity of $\underline n\left(   h\left( k_{\gamma(s)}\circ \varepsilon\right)\e^{\alpha \gamma(s)} \1_{\{s<V\}} \right)$ at $s$ by splitting the space $\mathcal{E}$ as follows for any $\delta'>0$
\begin{align*}
& \left| \underline n\left(   h\left( k_{\gamma(s_n)}\circ \varepsilon \right) \e^{\alpha \gamma(s_n)} \1_{\{s_n<V\}}  \right) - \underline n\left(   h\left( k_{\gamma(s)}\circ \varepsilon\right) \e^{\alpha \gamma(s)} \1_{\{s<V\}}  \right) \right| 
\\
&\quad \leq  \int \left| h\left( k_{\gamma(s_n)}\circ \varepsilon\right) \e^{\alpha \gamma(s_n)} \1_{\{s_n<V\}} - h\left( k_{\gamma(s)}\circ \varepsilon\right) \e^{\alpha \gamma(s)} \1_{\{s<V\}} \right| \ \underline n\left(\der \varepsilon \right) 
\\
&\quad = \underbrace{\int\limits_{V\leq s+\delta^\prime} \vert \cdot \vert \ \underline n\left(\der \varepsilon \right) }_{(1)}
+ \underbrace{\int\limits_{\left(\Upsilon_s(\delta)\right)^c,V>s+\delta^\prime} \vert \cdot \vert \ \underline n\left(\der \varepsilon \right) }_{(2)}
+ \underbrace{\int\limits_{\Upsilon_s(\delta),V>s+\delta^\prime} \vert \cdot \vert \ \underline n\left(\der \varepsilon \right) }_{(3)}.
\end{align*}
Now let us see what happens with each of the terms in this sum:
\begin{itemize}
\item[(1)] Since $s_n\geq s$, $0\leq \gamma(s)\leq s$ and $h$ is bounded, we have
\begin{align*}
&\int\limits_{V\leq s+\delta^\prime} \left| h \left( k_{\gamma(s_n)}\circ \varepsilon\right) \e^{\alpha \gamma(s_n)} \1_{\{s_n<V\}}  - h\left( k_{\gamma(s)}\circ \varepsilon\right) \e^{\alpha \gamma(s)} \1_{\{s<V\}} \right| \ \underline n\left(\der \varepsilon \right) 
\\
& \qquad \leq 2 \Vert h\Vert_\infty \e^{\alpha (s+\delta^\prime)}\underline n\left(s<V\leq s+\delta^\prime \right). \nonumber
\end{align*}
For every $s,\delta^\prime>0$ it holds that $\underline n(s<V\leq s+\delta^\prime)<+\infty$. Therefore, downward monotone convergence applies and it implies that 
\[
\underline n\left(s<V\leq s+\delta^\prime\right)\longrightarrow 0, \ \textrm{when} \ \delta^\prime\rightarrow 0.
\]
This allows to choose, for every $\eta>0$, a suitable $\delta^\prime$ such that the term $(1)$ is smaller than $\frac{\eta}{2}$.
\item[(2)] Again, $h$ bounded implies that whenever $s_n<s+\delta'$
\begin{align*}
&\int\limits_{\substack{\left(\Upsilon_s(\delta)\right)^c \\ V>s+\delta^\prime}}  \left| h \left( k_{\gamma(s_n)}\circ \varepsilon\right) \e^{\alpha \gamma(s_n)} \1_{\{s_n<V\}}  - h\left( k_{\gamma(s)}\circ \varepsilon\right) \e^{\alpha \gamma(s)}  \1_{\{s<V\}} \right| \ \underline n\left(\der \varepsilon \right)
\\
& \qquad \qquad \leq 2 \Vert h\Vert_\infty \e^{\alpha (s+\delta^\prime)} \underline n\left(\left(\Upsilon_s(\delta)\right)^c,V> s+\delta^\prime\right).
\end{align*}
On the other hand, from the definition of $\Upsilon_s(\delta)$ and since the supremum is attained at a unique point $\underline{n}$-a.e., it follows from the dominated convergence theorem that

\begin{eqnarray}
\lim\limits_{\delta\rightarrow 0}\underline n\left(\left(\Upsilon_s(\delta)\right)^c,V> s+\delta^\prime \right) =\underline n\left(\varepsilon_s=\overline\varepsilon_s,V> s+\delta^\prime \right). \label{eq:limit-eta}
\end{eqnarray}
We now show that the r.h.s. of this limit is $0$ for every fixed $s>0$. For any $u\in(0, s)$,
\begin{eqnarray}
&&\underline n\left(\varepsilon_s=\overline\varepsilon_s,V> s+\delta^\prime \right) = \int\limits_{x\in(0,+\infty)} \int\limits_{y\geq x}  \underline n \left(\varepsilon_s=\overline\varepsilon_s,V> s+\delta^\prime,\varepsilon_u\in\der x,\overline\varepsilon_u\in\der y \right) 
\nonumber \\
&& \qquad = \int\limits_{x\in(0,+\infty)} \int\limits_{y\geq x}  \underline n \left(\varepsilon_u\in\der x,\overline\varepsilon_u\in\der y \right) P_x\left(T_{0}^-> s+\delta^\prime-u,S_{s-u}=X_{s-u}\geq y\right),
\nonumber 
\end{eqnarray}
where the last line comes from the Markov property. Besides, for $y\geq x$
\begin{eqnarray}
&&P_x\left(T_0^->s+\delta^\prime-u,S_{s-u}=X_{s-u}\geq y\right) = P_0\left(T_{-x}^->s+\delta^\prime-u,S_{s-u}=X_{s-u}\geq y-x\right)
\nonumber \\
&& \qquad \leq P_0\left(S_{s-u}=X_{s-u}\right) = P\left(\exists t>0:\mathscr{L}^{-1}(t) = s-u\right) = P\left(\mathscr{L}^{-1}(\mathscr{T}_{s-u}) = s-u\right),
\nonumber
\end{eqnarray}
where $\mathscr{L}^{-1}$ is the so-called \textit{ladder time process}, which is the inverse of the local time at $0$ of the process reflected at its supremum, $S-X$; and $\mathscr{T}_v\coloneqq \inf\{t:\mathscr{L}^{-1}(t)> v\}$ for any $v\geq 0$. We know from \cite{Ber96} that $\mathscr{L}^{-1}$ is a subordinator, with drift equal to $0$ when $0$ is regular for $(-\infty,0)$, which is always the case in absence of negative jumps (see for instance \cite{Cha13}).
Another result from \cite[Chapter~III.2]{Ber96} tells us that any subordinator $Y$ with drift $0$ never creeps over any level $x>0$, that is $P\left(Y_{T_{x}^+}=x\right) =0$. Hence, we can conclude that
\begin{align}
\underline n\left(\varepsilon_s=\overline\varepsilon_s,V>s+\delta^\prime \right) = 0, \label{eq:e_s=sup_e_s}
\end{align}
which guarantees, together with \eqref{eq:limit-eta}, that for any $\eta>0$, we can choose $\delta<\delta^\prime$ sufficiently small that 
\begin{align}
&\int\limits_{\substack{\left(\Upsilon_s(\delta)\right)^c \\ V>s+\delta^\prime}} \left| h\left( k_{\gamma(s_n)}\circ \varepsilon\right) \e^{\alpha \gamma(s_n)} \1_{\{s_n<V\}} - h\left( k_{\gamma(s)}\circ \varepsilon\right) \e^{\alpha \gamma(s)} \1_{\{s<V\}}  \right| \ \underline n\left(\der \varepsilon \right)  
< \frac{\eta}{2}. \nonumber
\end{align}
\item[(3)] Let $N_\delta$ be such that for $n\geq N_\delta$, $\vert s_n-s\vert<\delta$, then $\forall n\geq N_\delta, \forall \varepsilon\in \Upsilon_s(\delta)$, such that $V(\varepsilon)>s+\delta^\prime$, we have $\gamma(s_n,\varepsilon)=\gamma(s,\varepsilon)$ and $\1_{\{s_n<V\}}=\1_{\{s<V\}}=1$. Hence the third term is 0 for $n\geq N_\delta$.
\end{itemize}
Finally, we can conclude that the function $\underline n\left(   f\left( k_{\gamma(s)}\circ \varepsilon\right) \1_{\{s<V\}}\right)$ is right-continuous for every $s>0$. 

\medskip
For $(ii)$ take as well $s>0$ and $s_n\downarrow s$. Fix $\delta>0$, then there exists $N_\delta$ such that for every $n\geq N_\delta$, $|s_n-s|<\delta$ and also 
\begin{align*}
& \left| \underline n\left(  f\left( k_{\gamma(V)}\circ\varepsilon\right) \1_{\{\gamma(V)<s_n<V\}}  - \underline n\left(  f\left( k_{\gamma(V)}\circ\varepsilon\right) \1_{\{\gamma(V)<s<V\}}  \right) \right)  \right|  
\\
& \qquad \le \int f\left(k_{\gamma(V)}\circ\varepsilon\right)\left| \1_{\{s\leq \gamma(V)<s_n<V\}} - \1_{\{\gamma(V)<s<V\leq s_n\}} \right| \underline{n}(\der \varepsilon)
\\
& \qquad \leq \Vert h\Vert_\infty \e^{\alpha(s+\delta)}\left(\underline{n}\left(s\leq \gamma(V)<s_n<V\right) + \underline{n}\left(\gamma(V)<s<V\leq s_n\right)  \right).
\end{align*}
By dominated convergence as $n\rightarrow\infty$, 
\begin{align*}
\underline{n}\left(\gamma(V)<s<V\leq s_n\right)\longrightarrow \underline{n}(\emptyset)=0
\end{align*}
and 
\begin{align*}
\underline{n}\left(s\leq \gamma(V)<s_n<V\right)\longrightarrow \underline{n}\left(s = \gamma (V), s<V\right) \leq \underline{n}\left( \varepsilon_s=\overline{\varepsilon}_s, s<V\right) = 0,
\end{align*}
as we has just proved in \eqref{eq:e_s=sup_e_s}. So the function in $(ii)$ is also right-continuous.

\end{proof}

\begin{proof}[Proof of Lemma~\ref{lemma:continuity-z}]

Notice first that for all $x>0$, since $X$ has no negative jumps, $X$ is a.s. continuous at $T_{-x}$ and $P\left(\Delta X_{T_{-x}} = 0, T_{-x}=T_{(-\infty,-x)}\right)=1$.
This allow us to apply \cite[Proposition VI.2.11 and VI.2.12]{JaShi03}, which ensure in this context that if we have $x_n\downarrow x$, then a.s. $T_{-x_n}\downarrow T_{-x}$, and moreover, the killed paths $k_{T_{-x_n}}\circ X$ also converge to $k_{T_{-x}}\circ X$ when $n\rightarrow \infty$ in Skorokhod topology. Hence, it exists a sequence $(\lambda_n)$ of changes of time (see Section~\ref{sec:preliminaries}) such that $\Vert \lambda_n - \textrm{Id}\Vert_\infty \rightarrow 0$ and $\Vert k_{T_{-x_n}}\circ X\circ \lambda_n -k_{T_{-x}}\circ X\Vert_M \rightarrow 0$ for all $M\geq 0$ a.s.

Additionally, since the sequence $(x_n)$ is decreasing, we deduce from the definition of $\gamma$ that $\left(\gamma(T_{-x_n},X)\right)$ is also a decreasing sequence, and that for all $n\geq 0$ we have
\begin{align*}
\gamma(T_{-x_n},X) \geq \gamma(T_{-x},X).
\end{align*}
Hence $\gamma(T_{-x_n},X)\downarrow \ell$ for some $\ell\geq 0$. 
Suppose that $\ell > \gamma(T_{-x},X)$, this implies that for every $n\geq 0$, $T_{-x}<\gamma(T_{-x_n},X)$, so we have
\begin{align*}
T_{-x}<\gamma(T_{-x_n},X) \leq T_{-x_n}.
\end{align*}
Then, the convergence of $(T_{-x_n})$ entail that $\gamma(T_{-x_n},X)\downarrow T_{-x}$, which in turn, since $X$ is continuous at $T_{-x}$, implies that $X_{\gamma(T_{-x_n})} \downarrow X_{T_{-x}} = -x$. 
The latter is not possible since $P$-a.s., $\sup_{[0,T_{-x_n}]} X\geq 0$ for every $n$. 
Hence, we can conclude that a.s. $\ell = \gamma(T_{-x},X)$, i.e.
\begin{align*}
\gamma\left( T_{-x_n}, X\right) \downarrow \gamma(T_{-x},X).
\end{align*}
Moreover, since for $T_{-x}$ we also have that $P$-a.s., $\sup_{[0,T_{-x}]} X\geq 0$, we can ensure that $\gamma(T_{-x},X)<T_{-x}$, so the sequence $\left(\gamma\left( T_{-x_n}, X\right)\right)$ is not only convergent, but it is constant from some $N\geq 0$.
As a consequence, we have that a.s. for all $n\geq N$ and all $M> 0$ 
\begin{align*}
{\theta'_{\gamma(V)}\circ k_{T_{-x_n}}\circ X = \theta'_{\gamma(T_{-x})}\circ k_{T_{-x_n}}\circ X},
\end{align*}
\begin{align*}
\Vert \theta'_{\gamma(V)}\circ k_{T_{-x_n}}\circ X\circ \lambda_n - \theta'_{\gamma(V)}\circ k_{T_{-x}}\circ X \Vert_M
\leq \Vert k_{T_{-x_n}}\circ X\circ \lambda_n - k_{T_{-x}}\circ X \Vert_M \rightarrow 0.
\end{align*}

These arguments, together with the continuous mapping theorem applied to $h\in\mathcal{C}_b(\mathcal{E},\R_+)$, lead to the a.s. convergence of $h\left( \theta'_{\gamma(V)}\circ k_{T_{-x_n}}\circ X \right)$ to $h\left( \theta'_{\gamma(V)}\circ k_{T_{-x}}\circ X \right)$. 
Finally, since $h$ is bounded, the dominated convergence theorem applies, and we can conclude that
\begin{align*}
\lim_n z(x_n) = z(x),
\end{align*}
that is, $z$ is right-continuous at $x>0$. Since $x$ is arbitrary, the result is proved.
\end{proof}

\begin{proof}[Proof of Lemma~\ref{lemma:continuity-w}]
As in the proof of Lemma~\ref{lemma:continuity-z}, we can apply \cite[Proposition VI.2.11 and VI.2.12]{JaShi03}, which imply that $T_{-x}(\varepsilon)$ and even more, $k_{T_{-x}}\circ \varepsilon$, are continuous functions for every $x>0$, where $\varepsilon$ is the canonical excursion of $X-S$ away from 0.
Since $F\in\mathcal{C}_b(\mathcal{E},\R_+)$, the function $F(e)\1_{V(e)<+\infty}$ is also continuous for $e\in \mathcal{E}$.
Besides, we have for every $x>0$, that $0<\overline{n}(T_{-x}<+\infty)<+\infty$, which allows us to conclude.
\end{proof}

\section*{Acknowledgments}
This work was supported by grants from Région Ile-de-France and Labex \mbox{MemoLife} from École Normale Supérieure. The authors also thank the Center for Interdisciplinary Research in Biology (CIRB, Collège de France) for funding.


\bibliographystyle{alpha} 
\bibliography{biblio-clean} 


\end{document}